\newcommand{\CKh}{\mathrm{CKh}}
\newcommand{\sgn}{\mathrm{sgn}}
\newcommand{\Kh}{\mathrm{Kh}}
\newcommand{\comp}{\mathrm{Comp}}
\newcommand{\oc}{\mathfrak{o}}
\newcommand{\wt}{\widetilde}
\newcommand{\Z}{\mathbb{Z}}
\newcommand{\bs}{\mathbf{s}}
\newcommand{\cC}{\mathscr{C}}
\newcommand{\cS}{\mathscr{S}}
\newcommand{\cL}{\mathscr{L}}
\newcommand{\cD}{\mathscr{D}}
\newcommand{\cF}{\mathscr{F}}
\newcommand{\cw}{\mathcal{C}}
\newcommand{\M}{\mathcal{M}}
\newcommand{\X}{\mathcal{X}}
\newcommand{\R}{\mathbb{R}}
\newcommand{\C}{\mathbb{C}}
\newcommand{\E}{\mathbb{E}}
\newcommand{\br}{\mathbf{r}}
\newcommand{\CP}{\mathbb{C}\mathbf{P}}
\newcommand{\SO}{\mathrm{SO}}
\newcommand{\UU}{\mathrm{U}}
\newcommand{\SL}{\mathfrak{sl}}
\newcommand{\Cat}{\mathscr{C}}
\newcommand{\gr}[1]{|{#1}|}
\newcommand{\id}{\mathrm{id}}
\newcommand{\ind}{\mathrm{ind}}
\newcommand{\Hom}{\mathrm{Hom}}
\newcommand{\KhSpace}{\mathcal{X}^\mathit{Kh}}
\newcommand{\F}{\mathbf{F}}
\newcommand{\RP}{\mathbb{R}\mathbf{P}}
\newcommand{\PP}{{}^+_+}
\newcommand{\PM}{{}^-_+}
\newcommand{\MP}{{}^+_-}
\newcommand{\MM}{{}^-_-}
\DeclareMathOperator{\Sq}{Sq}
\DeclareMathOperator{\St}{St}
\DeclareMathOperator{\sq}{sq}
\DeclareMathOperator{\Ob}{Ob}
\newtheorem{lemma}{Lemma}[section]
\newtheorem{proposition}[lemma]{Proposition}
\newtheorem{theorem}[lemma]{Theorem}
\theoremstyle{definition}
\newtheorem{remark}[lemma]{Remark}
\newtheorem{definition}[lemma]{Definition}
\newtheorem{example}[lemma]{Example}
\begin{document}
\parindent0em
\setlength\parskip{.1cm}

\title[$\SL_n$ homotopy type for matched diagrams]{An $\SL_n$ stable homotopy type for matched diagrams}

\thanks{AL and DS were both supported by EPSRC grant EP/M000389/1, DJ was supported by an EPSRC graduate studentship.}

\author[Dan Jones]{Dan Jones}
\address{Department of Mathematical Sciences\\ Durham University}
\email{daniel.jones@durham.ac.uk}

\author[Andrew Lobb]{Andrew Lobb}
\address{Department of Mathematical Sciences\\ Durham University}
\email{andrew.lobb@durham.ac.uk}

\author[Dirk Sch\"utz]{Dirk Sch\"utz}
\address{Department of Mathematical Sciences\\ Durham University}
\email{dirk.schuetz@durham.ac.uk}

\begin{abstract}
There exists a simplified Bar-Natan Khovanov complex for open 2-braids.  The Khovanov cohomology of a knot diagram made by gluing tangles of this type is therefore often amenable to calculation.  We lift this idea to the level of the Lipshitz-Sarkar stable homotopy type and use it to make new computations.

Similarly, there exists a simplified Khovanov-Rozansky $\SL_n$ complex for open 2-braids with oppositely oriented strands and an even number of crossings.  Diagrams made by gluing tangles of this type are called matched diagrams, and knots admitting matched diagrams are called bipartite knots.  To a pair consisting of a matched diagram and a choice of integer $n \geq 2$, we associate a stable homotopy type.  In the case $n=2$ this agrees with the Lipshitz-Sarkar stable homotopy type of the underlying knot.  In the case $n \geq 3$ the cohomology of the stable homotopy type agrees with the $\SL_n$ Khovanov-Rozansky cohomology of the underlying knot.

We make some consistency checks of this $\SL_n$ stable homotopy type and show that it exhibits interesting behaviour.  For example we find a $\CP^2$ in the $\SL_3$ type for some diagram, and show that the $\SL_4$ type can be interesting for a diagram for which the Lipshitz-Sarkar type is a wedge of Moore spaces.
\end{abstract}

\maketitle

\tableofcontents

\section{Introduction}
\label{sec:introduction}
In \cite{LipSarKhov}, Lipshitz-Sarkar construct a stable homotopy type associated to a knot.  They show that the cohomology of this object recovers Khovanov cohomology\footnote{We use \emph{Khovanov cohomology} here rather than \emph{Khovanov homology} for reasons discussed in Subsection \ref{subsec:conventions}}.  The construction of this stable homotopy type proceeds along lines laid down by Cohen-Jones-Segal \cite{CJS}.  Allowing ourselves a good measure of imprecision, the idea can be summarized as follows.

To a knot diagram $D$ of a knot $K$ Khovanov associated a combinatorial bigraded cochain complex $\CKh(D)$ \cite{kh1}.  The cohomology of this complex (Khovanov cohomology) is an invariant $\Kh(K)$ of $K$, exhibiting as its graded Euler characteristic a knot polynomial - the Jones polynomial - and is projectively functorial for knot cobordisms.  These properties (as well as host of spectral sequences starting from Khovanov cohomology and abutting to various Floer-theoretic invariants of the knot $K$), invite one to think of Khovanov cohomology as a type of Floer homology.

Following this mental yoga further, one might think of the standard generators of the Khovanov cochain complex $\CKh(D)$ as being akin to critical points of a Floer functional (or, more simply, of a Morse function).  Then the differential describes $0$-dimensional moduli spaces of flowlines between those critical points.  If one can make a good guess as to what the higher-dimensional spaces of flowlines might be, then  one can follow the recipe of Cohen-Jones-Segal \cite{CJS} and associate to a knot diagram a stable homotopy type whose cohomology recovers Khovanov cohomology.  Finally one hopes that what one has constructed is invariant under the Reidemeister moves.

There is of course much difficulty to be overcome in making the previous two paragraphs yield up an honest stable homotopy type invariant.  In particular, the input to the Cohen-Jones-Segal machine is more complicated than we have made out, in fact it takes the form of a \emph{framed embedded flow category}, and constructing such a thing takes the majority of the paper \cite{LipSarKhov}.  Nevertheless, one should think of framed flow categories as being closely related to Morse theory.

Our motivation for writing this paper was to attempt to extend this construction to the case of $\SL_n$ Khovanov-Rozansky cohomology (for $n \geq 2$) in a way that might enable us and others to make computations.  The Lipshitz-Sarkar stable homotopy type should appear as the case $n=2$ of this extension.  Since there is a notion of stabilization of these cohomologies as $n \rightarrow \infty$, something similar should be true for the stable homotopy types.

Given the input of a special type of knot diagram (a \emph{matched} diagram) we show how to create a stable homotopy type for each $n \geq 2$ whose cohomology recovers the $\SL_n$ Khovanov-Rozansky cohomology of the underlying knot.  In the case $n=2$ we recover the Lipshitz-Sarkar stable homotopy type.  As well as obtaining the `correct' thing when $n=2$, we give some consistency checks suggesting that the space we construct is the right space.

The stable homotopy type we define is eminently computable and indeed we give some interesting computations at the end of this paper.  In fact, the construction even makes the Lipshitz-Sarkar stable homotopy type more computable for certain knots (for example for pretzel knots) since it reduces the number of objects in the corresponding flow category.  We do not show in this paper that the stable homotopy type for $n > 2$ is independent of the choice of diagram, but this is something that we shall return to in future work.

\subsection{Statement of results}
\label{subsec:results}
In Figure \ref{2twist} we define an \emph{elementary tangle diagram} of index $r$.  Given a link diagram $D$, there is of course always a decomposition of $D$ into such elementary pieces - for example one piece of index $1$ for each crossing of $D$.  When a link diagram $D$ comes together with such a decomposition we shall write it as $D_\br$ where $\br = (r_1, \ldots, r_m) \in \Z^m$ and the decomposition is into $m$ elementary tangles where the $i$th tangle has index $r_i$.

\begin{definition}
\label{def:glued_d}
We shall refer to the data of a link diagram $D$ together with such a choice of decomposition into elementary tangles as a \emph{glued diagram} $D_\br$.
\end{definition}

\begin{figure}
	\centerline{
		{
			\psfrag{ldots}{$\ldots$}
			\psfrag{L}{$L$}
			\psfrag{R}{$R$}
			\includegraphics[height=0.6in,width=3.5in]{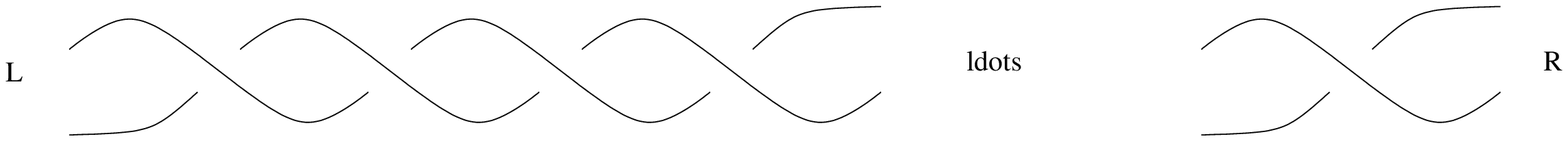}
		}}
		\caption{We show what we mean by an \emph{elementary tangle diagram} of index $r > 0$.  In this figure $r$ is the number of crossings, and $L$ and $R$ denote a choice of left and right endpoints.  The mirror image of this tangle would be a diagram for negative $r$.}
		\label{2twist}
\end{figure}

From now on we shall consider all diagrams to be oriented.  Given a choice of integer $n \geq 2$ and a glued diagram $D_\br$ (we shall write $D$ when we intend to ignore the decomposition), we shall define a framed embedded flow category $\cL^n(D_\br)$.  The objects of the flow category are cohomologically graded, and the category splits as a disjoint union of flow categories along an quantum integer grading.

There is a space $\X^n(D_\br)$ associated to $\cL^n(D_\br)$ by the Cohen-Jones-Segal construction - this takes the form of a stable homotopy type.  The cohomology of $\X^n(D_\br)$ is bigraded by the underlying cohomological degree and quantum grading of the objects of $\cL^n(D_\br)$.

Following suitable degree choices we have

\begin{theorem}
	\label{thm:Lip-Sar_equivalence}
	The space $\X^2(D_\br)$ agrees with the space associated to the diagram $D$ by the Lipshitz-Sarkar construction.
\end{theorem}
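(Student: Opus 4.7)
The strategy is to exhibit the framed flow category $\cL^2(D_\br)$ as being obtained from the Lipshitz-Sarkar framed flow category associated to $D$ by a finite sequence of handle cancellations at the flow category level, each of which is known to preserve the associated stable homotopy type.  The simplified Bar-Natan-Khovanov complex for an elementary $2$-braid tangle of index $r$ has only $|r|+1$ generators per quantum degree, whereas the full Khovanov cube on that tangle contributes $2^{|r|}$ vertices; the reduction proceeds by iterated Gaussian elimination along isomorphism edges in the local cube of resolutions.

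First, I would verify that the underlying cochain complex of $\cL^2(D_\br)$ agrees with the tensor product, over the elementary tangles, of the simplified Bar-Natan-Khovanov complexes glued together according to the decomposition of $D$ into the pieces indexed by $\br$.  This is a direct bookkeeping check confirming that the objects, bigradings, and differential of $\cL^2(D_\br)$ reproduce $\CKh(D)$ after the local simplifications, and in particular that $\X^2(D_\br)$ and the Lipshitz-Sarkar space have the same Khovanov cohomology.

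Second, I would show that each Gaussian elimination step in the passage from the full cube to the simplified Bar-Natan complex on an elementary tangle can be realized at the framed flow category level as the cancellation of a pair of objects connected by a single framed $0$-dimensional moduli space, in close analogy with the Morse-theoretic cancellation of two critical points of consecutive index joined by a single gradient flowline.  This requires tracking the Lipshitz-Sarkar ladybug matching, the global sign assignment, and the framed neat embedding data through the local simplification on each elementary tangle, and verifying that the higher-dimensional moduli spaces between the cancelled pair and the surviving objects have the form demanded by the cancellation lemma.

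Third, with this data in hand, one invokes the cancellation lemma for framed flow categories, which asserts that such a cancellation produces a framed flow category with the same Cohen-Jones-Segal stable homotopy type; applying it once per Gaussian elimination, tangle by tangle, yields the equivalence of $\X^2(D_\br)$ with the Lipshitz-Sarkar space of $D$.  The principal obstacle is the middle step: the Bar-Natan simplification is a chain-level statement, and lifting it to a sequence of genuine framed flow category cancellations requires coherent control of signs, framings, and higher moduli spaces across all tangles simultaneously.  In particular, one must rule out the possibility that the cumulative effect of many cancellations modifies the framings in a way that alters the stable homotopy type, and one must confirm that the framings produced by our tangle-local construction match those of Lipshitz-Sarkar up to the allowed ambiguities.
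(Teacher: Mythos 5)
Your proposal describes the approach the paper itself identifies and then deliberately sets aside. Immediately after stating Proposition~\ref{prop:all_discs_consequence} the authors observe that repeated handle cancellation on the framed flow category (mirroring Gaussian elimination at the cochain level) ``does work,'' but that verifying the resulting framed flow category is $\cL^{\Kh}(D_\br)$ with a framing induced as a disc cover of the sock flow category $\cS_\br$ ``turns out to be rather long'' --- which is precisely the bottleneck you flag in your closing paragraph. The paper avoids this by a Reidemeister~II trick: both the coarse decomposition $\br = (r_1+1, r_2, \ldots, r_m)$ and the refined decomposition $\wt{\br} = (1, r_1, r_2, \ldots, r_m)$ are exhibited as arising from the single intermediate glued diagram $D'_{(1, -1, r_1+1, r_2, \ldots, r_m)}$ by cancelling a $\pm 1$ Reidemeister~II pair, and the key lemma (Proposition~\ref{prop:Reidemeister_II_trick}) shows this cancellation preserves the stable homotopy type by removing a contractible upward-closed subcategory $C_1$ and then, from the quotient, a contractible downward-closed subcategory, after which the surviving full subcategory $\cC_2$ is identified with $\cL^{\Kh}(D_\bs)$. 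The decisive advantage over your route is that passing to a full subcategory merely restricts the framed neat embedding --- no moduli spaces or framings are altered --- so $\cC_2$ inherits a framing that is automatically a disc cover of $\cS_\bs$, and the step-by-step framing reconciliation your second step demands never arises. Your approach is not incorrect, but it replaces the paper's clean cancellation of large contractible subcategories with a much heavier framing verification that the authors judged not worth carrying out.
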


In the case $n > 2$ we restrict our attention to \emph{matched diagrams}.

\begin{definition}
	\label{defn:matched_diagram}
	A glued diagram $D_\br$ is called \emph{matched} if each coordinate of $\br$ is even and each elementary tangle in the decomposition of $D$ has oppositely oriented strands.  A link admitting such a diagram is called \emph{bipartite}.
\end{definition}

\begin{remark}
Note that if $D_\br$ is a knot diagram then the evenness condition implies the orientation condition.
\end{remark}

Khovanov cohomology is a categorification of the Jones polynomial, which arises from the fundamental representation of $\SL_2$ via the Reshetikhin-Turaev construction.  The categorification of the corresponding polynomial for $\SL_n$ is Khovanov-Rozansky cohomology \cite{khr1}.  Again, this is a bigraded link invariant and we shall write it as $H_{\SL_n}(D;G)$ for $G$ an abelian group.  That one can have general $G$-coefficients (as opposed to just complex coefficients) was noted first in the case $n=3$ by Khovanov in his foam categorification.  The case of $n>3$ was only recently shown by Queffelec-Rose \cite{QuefRose} to have a foam interpretation valid with arbitrary coefficents.

With suitable grading shifts we have

\begin{theorem}
	\label{thm:cohom_of_Xn}
	If $D_\br$ is a matched link diagram then the cohomology with complex coefficents of $\X^n(D_\br)$ is isomorphic to $H_{\SL_n}(D;\C)$ as a bigraded complex vector space.
\end{theorem}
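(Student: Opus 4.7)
The plan is to exploit the general fact that the Cohen--Jones--Segal construction produces a space whose reduced (integral, and hence complex) cohomology is naturally isomorphic to the cohomology of the ``Morse cochain complex'' of the input framed flow category: the underlying bigraded group is free on the objects (with the stipulated cohomological and quantum gradings) and the differential is the signed count of points in the $0$-dimensional moduli spaces, signs coming from the framings. So Theorem \ref{thm:cohom_of_Xn} reduces to identifying the cochain complex $C^*(\cL^n(D_\br))$ with a cochain complex computing $H_{\SL_n}(D;\C)$.

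The target complex will be the simplified $\SL_n$ Khovanov--Rozansky complex for matched diagrams already alluded to in the abstract, the existence of which (in the foam model of Queffelec--Rose) is presumably reviewed earlier in the paper. For each elementary tangle of even index $r$ with oppositely oriented strands this complex is built from a small local model $\cC^n(r)$ whose generators correspond to the admissible foam resolutions of a 2-braid of that length and whose differential records the local foam cobordisms; the full complex $C^*_{\SL_n}(D_\br;\C)$ is then obtained by gluing these local complexes along the matching endpoints of adjacent tangles, i.e. by a suitable tensor product in the foam 2-category. The plan is to show that $\cL^n(D_\br)$ has been constructed so that its associated Morse cochain complex is precisely (or at least chain isomorphic to) this glued complex, and then invoke the already-established fact that the simplified model computes $H_{\SL_n}(D;\C)$.

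I would proceed in two stages. First, a \emph{local} stage: for each elementary tangle of index $r_i$, match the objects of the local flow category with the basis vectors of $\cC^n(r_i)$ (checking that the quantum and cohomological grading shifts agree), and match the signed $0$-dimensional moduli space counts with the local KR differential. Second, a \emph{gluing} stage: show that the global flow category $\cL^n(D_\br)$ is the expected product of the local flow categories (objects a product of local objects, moduli spaces a compatible product of local moduli spaces, framings compatible with the product), so that its Morse cochain complex is the tensor product of the local complexes over $\C$. Since the gluing recipe for matched KR complexes is itself a tensor product of local pieces, these two stages together yield the required chain isomorphism.

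The main obstacle is expected to be the local-stage compatibility of signs and framings with the KR differential at the cochain level: the framings produced by the construction of $\cL^n(D_\br)$ must be shown to give exactly the signs appearing in the local foam differential (or to differ from them by a coboundary of a change of basis). Working over $\C$ gives one some flexibility, since it suffices to produce a bigraded quasi-isomorphism rather than a strict chain isomorphism, so if an honest sign match fails one can still try to exhibit a filtration-preserving map of complexes inducing the isomorphism on cohomology. Once the sign/framing bookkeeping is done for a single elementary tangle, the gluing stage should be relatively formal, reducing to checking that the disjoint-union and composition operations on framed flow categories used in the definition of $\cL^n(D_\br)$ correspond under the Morse cochain functor to the tensor product in the foam model.
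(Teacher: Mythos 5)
Your overall strategy---identify the cochain complex of the flow category with a simplified $\SL_n$ Khovanov--Rozansky complex for matched diagrams, then cite that the simplified model computes $H_{\SL_n}$---is the same as the paper's, but you have the wrong provenance for the simplified complex and this obscures the one thing that really matters. The relevant result is \emph{Krasner's theorem} \cite{Krasner}, which gives a simplified cochain complex computing $H_{\SL_n}$ for matched diagrams, and Krasner's result is stated over $\C$. That is the entire reason the theorem restricts to complex coefficients. Your appeal to Queffelec--Rose's foam model is misplaced here: that reference is relevant only if one wants to extend to integer (or arbitrary) coefficients, which the paper explicitly declines to do in this theorem and remarks on just after its statement.

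Your proposed two-stage local-then-glue argument is also more work than the paper spends, and for a reason you should appreciate: the flow category $\cL^n(D_\br)$ of Section \ref{sec:assoc_flow_cat} was built so that its objects (labelled weighted glued configurations) and its $0$-dimensional moduli space counts (Definition \ref{defn:numberofpoints} together with the partial order of Definition \ref{def:partorder}) are, by design, exactly the generators and matrix coefficients of Krasner's complex, with the grading conventions adjusted by a cohomological shift by $-R$. So the ``local compatibility of signs and framings'' obstacle you anticipate is not an obstacle at all, or rather it has already been absorbed into the definitions. The paper's proof is one sentence: Krasner's simplified complex for $D_\br$ is precisely the cellular cochain complex of $\X^n(D_\br)[-R]$, hence the cohomology agrees. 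Your write-up would be strengthened by naming Krasner, dropping the local/gluing digression (or at most flagging it as what one would check if one did not already know the flow category was constructed to encode Krasner's complex), and making explicit the $[-R]$ degree shift.
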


The reason for restricting this theorem to complex coefficients is not so serious.  If one wanted to prove the theorem for arbitrary coefficients one would have to verify Krasner's theorem \cite{Krasner} on simplified Khovanov-Rozansky cochain complexes for matched diagrams over the integers (Krasner's result is over the complex numbers).  There is ample computational evidence that indeed Krasner's theorem will hold in this case, and such an extension using Queffelec-Rose's foamy definition of Khovanov-Rozansky cohomology over the integers should not be difficult, although we do not undertake it here.

Although the flow category $\cL^n(D_\br)$ depends heavily on the choice of decomposition of $D$ into elementary tangles we show that

\begin{proposition}
	\label{prop:matched_independence}
	If $D_\br$ is a matched link diagram and $n \geq 2$, then $\X^n(D_\br)$ is independent of the decomposition of $D$ into elementary tangles, and so can be written $\X^n(D)$.
\end{proposition}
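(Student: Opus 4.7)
The plan is to reduce to a local invariance statement and then check invariance under each local move. First I would observe that any two matched decompositions of $D$ are connected by a finite sequence of elementary operations: each such operation merges two adjacent elementary tangles of even index and common sign into a single elementary tangle, or performs the inverse split. This follows because the finest matched decomposition uses only tangles of index $\pm 2$, and any coarser matched decomposition is obtained by iteratively merging adjacent same-sign pairs. It therefore suffices to show that if $D_{\br'}$ is obtained from $D_\br$ by splitting a single elementary tangle of index $2(k_1+k_2)$ into adjacent tangles of indices $2k_1$ and $2k_2$, then $\X^n(D_\br) \simeq \X^n(D_{\br'})$ stably.

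Next I would compare $\cL^n(D_\br)$ and $\cL^n(D_{\br'})$ across such a split. Both underlying cochain complexes compute $H_{\SL_n}(D;\C)$ by Theorem~\ref{thm:cohom_of_Xn}, so they are quasi-isomorphic; more concretely, the Krasner simplified complex on a tangle of index $2(k_1+k_2)$ admits an explicit strong deformation retraction onto the tensor product of the Krasner complexes on tangles of indices $2k_1$ and $2k_2$, and this retraction is the one implicitly used in Theorem~\ref{thm:cohom_of_Xn}. I would write this retraction out on the standard generators, exhibiting the extra objects present in the finer decomposition as pairs of critical points of adjacent cohomological degree joined by a single element of the $0$-dimensional moduli spaces.

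The hard part is to lift this algebraic cancellation to the framed embedded flow category, and hence to the Cohen-Jones-Segal space. I would apply a handle-cancellation lemma for framed flow categories (as used by Lipshitz-Sarkar and in earlier work of the present authors) to each such pair: one must check that the relevant $0$-dimensional moduli space is a single framed point and that the higher moduli spaces and their neat embeddings behave as the lemma requires, so that cancellation produces a framed flow category stably equivalent to $\cL^n(D_\br)$. Iterating cancellation reduces $\cL^n(D_{\br'})$ stably to $\cL^n(D_\br)$, yielding $\X^n(D_{\br'}) \simeq \X^n(D_\br)$. The main obstacle is the framing bookkeeping: after cancellation the framings on the surviving objects must agree with those built directly from the coarser decomposition. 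This reduces to a local computation inside a single elementary tangle, and I expect it to follow from the naturality of the Krasner simplification under tangle concatenation together with the fact that the oppositely oriented strands in a matched tangle fix the relevant sign conventions uniquely.
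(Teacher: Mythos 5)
Your reduction to a local move --- splitting a matched tangle of index $2(k_1+k_2)$ into adjacent tangles of indices $2k_1$, $2k_2$, or the reverse merge --- matches how the paper reduces the statement, and your observation that the extra objects of the finer decomposition come in cancellable pairs is correct at the chain level. But the proof strategy you then adopt, direct handle cancellation on the framed flow category, is exactly the route the authors describe and set aside. In Subsection~\ref{subsec:easier_induction} they remark that this approach ``does work, but the argument needed to show that the framed flow category resulting from the cancellation is indeed $\cL^{\Kh}(D_\br)$ (with a framing induced as a cover of the sock flow category $\cS_{\br}$) turns out to be rather long.'' Your closing paragraph --- that the framing agreement ``reduces to a local computation'' which you ``expect to follow'' from naturality of the Krasner simplification and the orientation conventions --- is precisely the step you have not supplied: it is not automatic that the framings on the surviving objects after cancellation coincide with those induced by the coarser sock, and nothing in your sketch addresses this.

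The paper avoids that issue by the trick of Figure~\ref{fig:sln_induction_twist}. Rather than comparing the two sides of the split directly, both are shown to give the same stable homotopy type as an intermediate glued diagram in which an auxiliary opposite-sign $\mp 2$ tangle pair has been inserted. The technical engine is Proposition~\ref{prop:s-2=(s)-(2)}, and its proof does \emph{not} invoke a handle-cancellation lemma: instead it exhibits explicit contractible downward-closed and upward-closed full subcategories of $\cL^n(\wt{D}_{\wt{\bs}})$, so that by Lipshitz--Sarkar's Lemma 3.32 these may be removed without changing the stable homotopy type and without any re-identification of framings. What remains is visibly isomorphic, \emph{as a framed cover of} $\cS^n_\bs$, to $\cL^n(D_\bs)$. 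If you wish to retain your direct-cancellation approach, you must supply the framing comparison in full; otherwise you should recast the argument using the auxiliary $\mp 2$ pair and closed-subcategory technique, which is both shorter and keeps the framing bookkeeping under control by construction.
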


\begin{figure}
	\centerline{
		{
			\psfrag{D}{$D$}
			\psfrag{D'}{$D'$}
			\includegraphics[height=1.5in,width=2.2in]{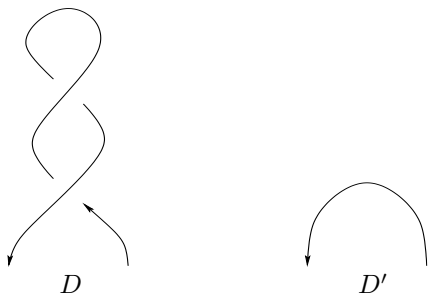}
		}}
		\caption{We illustrate an extended version of Reidemeister move I for matched diagrams.}
		\label{fig:sln_R1}
	\end{figure}

\begin{figure}
	\centerline{
		{
			\psfrag{D}{$D$}
			\psfrag{D'}{$D'$}
			\includegraphics[height=2.2in,width=2.2in]{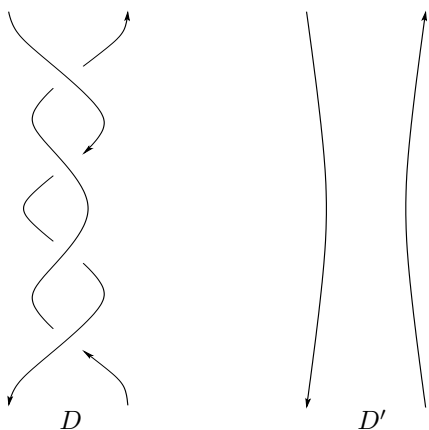}
		}}
		\caption{We illustrate an extended version of Reidemeister move II for matched diagrams.}
		\label{fig:sln_R2}
\end{figure}

Furthermore, we derive relatives of Reidemeister moves I and II for matched diagrams.

\begin{proposition}
	\label{prop:sln_R1+R2}
In Figures \ref{fig:sln_R1} and \ref{fig:sln_R2} we give matched diagrams $D$ and $D'$ that differ locally.  For such diagrams we have that $\X^n(D) \simeq \X^n(D')$.  The corresponding relations for the mirror images also hold.
\end{proposition}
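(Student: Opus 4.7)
The plan is to deduce each move from a local chain-homotopy equivalence of simplified $\SL_n$ complexes on matched 2-braids, and then lift that equivalence to the flow category $\cL^n$ using a Whitehead-type cancellation at the level of framed flow categories in the spirit of \cite{LipSarKhov}. By Proposition \ref{prop:matched_independence}, we may first choose any decomposition of $D$ (resp.\ $D'$) into elementary tangles that isolates the local region shown in Figure \ref{fig:sln_R1} (resp.\ Figure \ref{fig:sln_R2}) inside one or two elementary tangles, and work with those preferred decompositions throughout.

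For the extended R1 of Figure \ref{fig:sln_R1}, the move replaces a single elementary tangle $T_r$ by a tangle of the form $T_{r\pm 2}$ (together with the small twist/cap data drawn in the figure). The simplified matched Khovanov--Rozansky complex for $T_{r\pm 2}$ differs from that of $T_r$ by a block that is acyclic in the sense of Krasner (\cite{Krasner}; the $n=2$ case is Bar-Natan's simplification). Concretely, one identifies two objects $x,y$ of $\cL^n(D'_{\br'})$ such that the $0$-dimensional moduli space $\M(y,x)$ is a single point with $+1$ framing sign, and $x,y$ have no other moduli spaces entering the wider flow category. Such a pair is Whitehead-cancellable: removing $x,y$ (together with their moduli spaces) gives a framed embedded flow category whose Cohen--Jones--Segal realisation is stably homotopy equivalent to the original, and which matches $\cL^n(D_\br)$ on the nose.

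For the extended R2 of Figure \ref{fig:sln_R2}, the local change replaces a pair of matched tangles $(T_r, T_{-r})$ stacked in the configuration shown by the corresponding trivial pair of matched tangles $(T_0, T_0)$, up to a planar isotopy. At the cochain level this is again a Krasner-style simplification: two collections of objects of $\cL^n(D_\br)$ pair up into cancellable Whitehead pairs, with the remaining objects and moduli spaces naturally in bijection with those of $\cL^n(D'_{\br'})$. As before, if the framings and neat embeddings of the paired objects agree up to the allowed ambient isotopy, the cancellation goes through and produces a stable equivalence $\X^n(D)\simeq \X^n(D')$. The mirror statements follow by reflecting the elementary tangles in the local region, which reverses the signs of $\br$ and of the cancelling framings simultaneously.

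The main obstacle is the third, purely framing-theoretic, step: while the chain-level equivalence is essentially Krasner's theorem, verifying that the specific cancellation pairs $(x,y)$ produced by the simplified complex for a matched 2-braid tangle actually carry the standard framing (so that the Whitehead cancellation on framed flow categories of \cite{LipSarKhov} applies) requires tracing through the explicit combinatorial model of $\cL^n$ on an elementary tangle. This reduces to a finite check that the framing on the single-point moduli space $\M(y,x)$ is $+1$ and that no stray higher-dimensional moduli spaces obstruct the cancellation, but the bookkeeping — especially keeping track of orientations of strands, sign conventions for the matched $\SL_n$ foams, and compatibility with the ambient neat embedding — is where the genuine work lies.
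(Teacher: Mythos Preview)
Your proposal has a genuine gap and also diverges from the paper's approach in a way worth noting.

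The gap is your description of the Whitehead cancellation. You write that one finds objects $x,y$ with $\M(y,x)$ a single point and ``$x,y$ have no other moduli spaces entering the wider flow category.'' This is false: the objects that get removed in these Reidemeister moves have plenty of moduli spaces to the rest of $\cL^n$. A single Whitehead cancellation of a pair $(x,y)$ with $\M(y,x)$ a point does \emph{not} require $x,y$ to be isolated; rather, it modifies all the other moduli spaces by gluing along the cancelled pair. Doing this repeatedly and then showing that the resulting framed flow category is \emph{isomorphic} to $\cL^n(D')$ with a framing induced from the sock is exactly the long bookkeeping the paper warns against (see the remarks before Proposition~\ref{prop:Reidemeister_II_trick}). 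Your proposal does not address how the moduli spaces are modified or why the end result has the correct sock-cover framing; the ``finite check'' you describe is not finite in any useful sense once the glued-in moduli spaces propagate.

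There is also a circularity: you invoke Proposition~\ref{prop:matched_independence} to choose a convenient decomposition, but in the paper that proposition and the R2 half of the present statement are proved \emph{simultaneously} via Proposition~\ref{prop:s-2=(s)-(2)}, so you cannot use it as input.

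The paper's route avoids all of this. Rather than cancelling pairs one at a time, it identifies an \emph{upward-closed} (resp.\ \emph{downward-closed}) full subcategory whose associated cochain complex is acyclic, so its realization is contractible; by \cite[Lemma~3.32]{LipSarKhov} the complementary downward-closed (resp.\ upward-closed) subcategory then has the same stable homotopy type. For R1, the objects with $i_1=2$ and the small circle decorated $x^{n-1}$ form such a subcategory isomorphic to $\cL^n(D')$. For R2 (and simultaneously for Proposition~\ref{prop:matched_independence}), one uses the trick of Figure~\ref{fig:sln_induction_twist}: add a $(-2,s_1)$ pair and peel off two contractible pieces, leaving the $(0,i_1)$ objects with $0\le i_1\le s_1-2$, which form a full subcategory isomorphic to $\cL^n(D_\bs)$ already framed as a cover of $\cS_\bs$. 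No framing needs to be tracked through a cancellation, because full subcategories inherit their framings directly.
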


Hence we have a multitude of evidence that our definition for this restricted class of diagrams is giving us the `correct' $\SL_n$ stable homotopy type: the construction gives the Lipshitz-Sarkar space for $n=2$ (see also Remark \ref{rem:lens}); the cohomology of the space is the correct thing (for general $n$ using complex coefficients, and for $n=3$ verified over the integers for a host of examples); the space constructed is independent of the decomposition of the matched diagram into elementary tangles; and the space is invariant under extended Reidemeister moves I and II.

Unfortunately, there does not exist a genuine Reidemeister calculus to move between different matched diagrams of the same bipartite knot.  It is probably not the case, for example, that just the moves of Figures \ref{fig:sln_R1} and \ref{fig:sln_R2} suffice!  Furthermore it would be preferable to have a construction of an $\SL_n$ stable homotopy type for all links (not just bipartite links) which takes as input a link diagram and is invariant under the Reidemeister moves.  We shall return to this question in a later paper.

We include at the end of this paper a section of computations, undertaken both by hand and by computer.  Of note is the detection of a $\CP^2$ summand in the $\SL_3$ stable homotopy type for the pretzel link $P(-2,2,2)$.  A $\CP^2$ summand has yet to be detected in the Lipshitz-Sarkar stable homotopy type, although this may be due only to lack of computations.

Furthermore, it has long been known that the Khovanov cohomology of a knot may be thin while the HOMFLYPT cohomology is thick.  At the level of spaces one should ask then if there is an example of a knot whose $\SL_2$ stable homotopy type is a wedge of Moore spaces while its $\SL_n$ stable homotopy type is more interesting for some $n > 2$.  Such an example is provided by the pretzel knot $P(2,-3,5)$ whose $\SL_4$ stable homotopy type induces non-trivial second Steenrod squares.

Finally, work by Baues and by Baues-Hennes \cite{baues, bauhen} reveals ways to detect combinatorially more stable homotopy types than Moore spaces, $\CP^2$, $\RP^5/ \RP^2$, $\RP^4/\RP^1$, and $\RP^2 \wedge \RP^2$ (and wedge sums of these spaces).  This is applied in the case of the Lipshitz-Sarkar stable homotopy type to the torus knots $T(4,5)$ and $T(4,7)$ after finding glued diagrams of each formed of a relatively low number of elementary tangles.

\subsection{Conventions}
\label{subsec:conventions}
In this paper we try to follow accepted conventions as far as we can.  One way in which we shall differ from usual (although not universal) practice is to refer to the invariants due to Khovanov and to Khovanov-Rozansky as \emph{cohomology} theories rather than \emph{homology} theories.  Since the differential in the complexes constructed does indeed increase the $i$-grading this makes sense.  It is also consistent with the spaces defined by Lipshitz-Sarkar and by us since the associated cohomology theories are the usual singular cohomologies of the spaces themselves.

In Figure \ref{fig:posneg} we fix what we mean by a positive or negative oriented crossing.

\begin{figure}
\centerline{
	{
\psfrag{ldots}{$\ldots$}
\psfrag{+}{$+$}
\psfrag{-}{$-$}
\includegraphics[height=1in,width=2.4in]{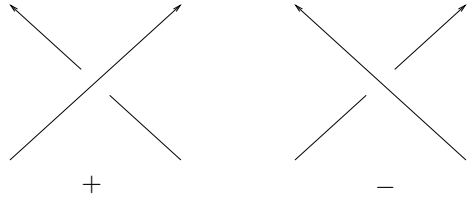}
}}
\caption{We show what we mean by positive and negative oriented crossings.}
\label{fig:posneg}
\end{figure}

In both Khovanov and in Khovanov-Rozansky cohomology, the cohomology of the positive trefoil is supported in non-negative cohomological degrees.  However, in Khovanov cohomology the invariant is supported in positive quantum degrees while in $sl_n$ Khovanov-Rozansky cohomology (even in the $sl_2$ case) the invariant is supported in negative quantum degrees.  This is a consequence of the grading assigned to $x$ in the underlying Frobenius algebra $\C[x]/x^2$, which is taken to be $+2$ for Khovanov-Rozansky cohomology and $-2$ for Khovanov cohomology.  This is unfortunate but we stick to this convention in the current paper.

We shall sometimes be a little imprecise about absolute cohomological and quantum gradings.  Especially in proofs, the extra decorations with shifts (depending on the writhe) tends to obscure the argument.  When we wish to be more precise we shall use the postscript $[s]\{t\}$ following a complex to denote a shift by $s$ in the cohomological direction and by $t$ in the quantum direction.

\subsection{Plan of the paper}
\label{subsec:plan}
We begin in Section \ref{sec:cjs} with a review of framed flow categories and how the Cohen-Jones-Segal construction associates stable homotopy types to such things.  In Section \ref{sec:assoc_flow_cat} we describe how we build a flow category given the data of a glued link diagram together with a choice of integer $n \geq 2$.  Section \ref{sec:sln_match_diag} considers the properties of the associated stable homotopy type.  In particular, the stable homotopy type returned for $n=2$ agrees with the Lipshitz-Sarkar stable homotopy type, while for $n > 2$ the stable homotopy type for a matched diagram has cohomology agreeing with $\SL_n$ Khovanov-Rozansky cohomology.  Finally in this section we provide some indictions discussed in the introduction that the stable homotopy type is the `correct' space for $n > 2$.  In Section \ref{sec:steen_squ} we discuss how to compute the second Steenrod square both in general flow categories and adjusted to our specific situations.  The paper closes with Section \ref{sec:examples} in which we provide explicit computations of several interesting stable homotopy types.

\section{Framed flow categories and the Cohen-Jones-Segal construction}
\label{sec:cjs}

\subsection{Framed flow categories}
\label{subsec:ffc}
To define flow categories we need a sharpening of a manifold with corners that goes back to \cite{ich}. Recall that a smooth manifold with corners is defined in the same way as an ordinary smooth manifold, except that the differentiable structure is now modelled on the open subsets of $([0,\infty))^k$.

So if $X$ is a smooth manifold with corners and $x\in X$ is represented by $(x_1,\ldots,x_k)\in ([0,\infty))^k$, let $c(x)$ be the number of coordinates in this $k$-tuple which are $0$. Denote by
\begin{eqnarray*}
 \partial^iX&=&\{x\in X\,|\,c(x)=i\}
\end{eqnarray*}
the codimension-$i$-boundary. Note that $x$ belongs to at most $c(x)$ different connected components of $\partial^1 X$. We call $X$ a \em smooth manifold with faces \em if every $x\in X$ is contained in the closure of exactly $c(x)$ components of $\partial^1X$. A \em connected face \em is the closure of a component of $\partial^1 X$, and a \em face \em is any union of pairwise disjoint connected faces (including the empty face). Note that every face is itself a manifold with faces. We define the boundary of $X$, $\partial X$, as the closure of $\partial^1 X$.

\begin{definition}
 Let $k,n$ be non-negative integers and $X$ a smooth manifold with faces. An \em $n$-face structure \em for $X$ is an ordered $n$-tuple $(\partial_1 X,\ldots, \partial_n X)$ of faces of $X$ such that
\begin{enumerate}
 \item $\partial_1 X\cup \cdots \cup \partial_nX = \partial X$.
 \item $\partial_i X\cap \partial_j X$ is a face of both $\partial_i X$ and $\partial_j X$ for $i\not=j$.
\end{enumerate}
A smooth manifold with faces $X$ together with an $n$-face structure is called a \em smooth $\langle n \rangle$-manifold\em.

If $a=(a_1,\ldots,a_n)\in \{0,1\}^n$, we define
\begin{eqnarray*}
 X(a)&=&\bigcap_{i\in \{j\,|\,a_j=0\}} \partial_i X
\end{eqnarray*}
and note that this is an $\langle |a| \rangle$-manifold, where $|a|=a_1+\cdots+ a_n$. If $a=(1,\ldots,1)$ we interpret the empty intersection as $X$.
\end{definition}

There is an obvious partial order $\leq£$ on $\{0,1\}^n$ such that $X(a)\subset X(b)$ for $a\leq b$.

\begin{definition} \label{euclidcorners}
 Given an $(n+1)$-tuple $\mathbf{d}=(d_0,\ldots,d_n)$ of non-negative integers, let
\begin{eqnarray*}
 \E^\mathbf{d}&=&\R^{d_0}\times [0,\infty) \times \R^{d_1}\times [0,\infty) \times \cdots \times [0,\infty) \times \R^{d_n}.
\end{eqnarray*}
Furthermore, if $0\leq a < b \leq n+1$, we denote $\E^\mathbf{d}[a:b]=\E^{(d_a,\ldots,d_{b-1})}$.
\end{definition}

We can turn $\E^\mathbf{d}$ into an $\langle n\rangle$-manifold by setting
\begin{eqnarray*}
 \partial_i \E^\mathbf{d}&=& \R^{d_0}\times \cdots \times \R^{d_{i-1}} \times \{0\} \times \R^{d_i} \times \cdots \times \R^{d_n}.
\end{eqnarray*}
We will refer to this boundary part as the \em $i$-boundary\em. In the case of $\E^\mathbf{d}[a:b]$ we also refer to the set
\begin{eqnarray*}
 \partial_{i-a} \E^\mathbf{d}[a:b]&=& \R^{d_a}\times \cdots \times \R^{d_{i-1}} \times \{0\} \times \R^{d_i} \times \cdots \times \R^{d_{b-1}}
\end{eqnarray*}
as the $i$-boundary, although strictly speaking this should be the $(i-a)$-boundary.

\begin{definition}
 A \em neat immersion $\imath$ \em of an $\langle n \rangle$-manifold is a smooth immersion $\imath\colon X \looparrowright \E^\mathbf{d}$ for some $\mathbf{d}\in \Z^{n+1}$ such that
\begin{enumerate}
 \item For all $i$ we have $\imath^{-1}(\partial_i\E^\mathbf{d})=\partial_i X$.
 \item The intersection of $X(a)$ and $\E^\mathbf{d}(b)$ is perpendicular for all $b<a$ in $\{0,1\}^n$.
\end{enumerate}
A \em neat embedding \em is a neat immersion that is also an embedding.

Given a neat immersion $\imath\colon X \looparrowright \E^\mathbf{d}$ we have a normal bundle $\nu_{\imath(a)}$ for each immersion $\imath(a)\colon X(a) \looparrowright \E^\mathbf{d}(a)$ as the orthogonal complement of the tangent bundle of $X(a)$ in $T\E^\mathbf{d}(a)$.
\end{definition}

\begin{definition}
 A \em flow category \em is a pair $(\Cat,\gr{\cdot})$ where $\Cat$ is a category with finitely many objects $\Ob=\Ob(\Cat)$ and $\gr{\cdot}\colon \Ob \to \Z$ is a function, called the \em grading\em, satisfying the following:
\begin{enumerate}
 \item $\Hom(x,x)=\{\id\}$ for all $x\in \Ob$, and for $x\not=y \in \Ob$, $\Hom(x,y)$ is a smooth, compact $(\gr{x}-\gr{y}-1)$-dimensional $\langle \gr{x}-\gr{y}-1\rangle$-manifold which we denote by $\mathcal{M}(x,y)$.
 \item For $x,y,z\in \Ob$ with $\gr{z}-\gr{y}=m$, the composition map
$$\circ\colon \mathcal{M}(z,y) \times \mathcal{M}(x,z) \to \mathcal{M}(x,y)$$
is an embedding into $\partial_m\mathcal{M}(x,y)$. Furthermore,
\begin{eqnarray*}
 \circ^{-1}(\partial_i \mathcal{M}(x,y))&=&\left\{ \begin{array}{lr}
                                             \partial_i \mathcal{M}(z,y)\times \mathcal{M}(x,z) & \mbox{for }i<m \\
                                             \mathcal{M}(z,y)\times \partial_{i-m}\mathcal{M}(x,z) & \mbox{for }i>m
                                            \end{array}
\right.
\end{eqnarray*}
\item For $x\not= y\in \Ob$, $\circ$ induces a diffeomorphism
\begin{eqnarray*}
 \partial_i\mathcal{M}(x,y)&\cong & \coprod_{z,\,\gr{z}=\gr{y}+i} \mathcal{M}(z,y) \times \mathcal{M}(x,z).
\end{eqnarray*}

\end{enumerate}
We also write $\mathcal{M}_\Cat(x,y)$ if we want to emphasize the flow category. The manifold $\mathcal{M}(x,y)$ is called the \em moduli space from $x$ to $y$\em, and we also set $\mathcal{M}(x,x)=\emptyset$.
\end{definition}

Note that $\mathcal{M}(x,y)=\emptyset$ whenever $\gr{y}\geq \gr{x}$, as the empty set is the only negative dimensional manifold.

\begin{example}
 Let $f\colon M\to \R$ be a Morse function on a closed manifold $M$, and let $v$ be a Morse-Smale gradient for $f$, meaning that all stable and unstable manifolds of $v$ intersect transversally. We then define the \em Morse flow category $\Cat_f$ \em as follows.

The objects are exactly the critical points of $f$, with grading given by the index. If $p$ is a critical point, define the stable and unstable manifolds with respect to the positive gradient flow, so that
\begin{eqnarray*}
 W^s(p)&=&\{x\in M\,|\, \lim_{t\to \infty}\gamma_x(t)=p\}
\end{eqnarray*}
where $\gamma_x\colon \R\to M$ is the flowline of $v$ with $\gamma_x(0)=x$, and
\begin{eqnarray*}
 W^u(p)&=&\{x\in M\,|\, \lim_{t\to -\infty}\gamma_x(t)=p\}.
\end{eqnarray*}
Given two different critical points $p$ and $q$, let
\begin{eqnarray*}
\tilde{\mathcal{M}}(p,q)&=& W^s(p)\cap W^u(q)/\R,
\end{eqnarray*}
where $\R$ acts on this intersection using the flow. Then $\tilde{\mathcal{M}}(p,q)$ can be embedded into $W^s(p)\cap f^{-1}(\{a\})$ for every $a\in (f(q),f(p))$, and it follows from the transversality condition that this is a smooth manifold of dimension $\ind(p)-\ind(q)-1$. To get the moduli space $\mathcal{M}(p,q)$ we compactify this space by adding all the broken flowlines between $p$ and $q$ using \cite[Lm.2.6]{AusBra}.
\end{example}

\begin{definition} \label{def:neat_immersion}
 Let $\Cat$ be a flow category and $\mathbf{d}=(d_A,\ldots,d_{B-1})\in \Z^{B-A}$ a sequence of non-negative integers with $A\leq \gr{x} \leq B$ for all $x\in \Ob(\Cat)$. A \em neat immersion $\imath$ \em of the flow category $\Cat$ relative $\mathbf{d}$ is a collection of neat immersions $\imath_{x,y}\colon \mathcal{M}(x,y)\looparrowright \E^\mathbf{d}[\gr{y}:\gr{x}]$ for all objects $x,y$ such that for all objects $x,y,z$ and all points $(p,q)\in \mathcal{M}(z,y)\times \mathcal{M}(x,z)$ we have
\begin{eqnarray*}
 \imath_{x,y}(p\circ q)&=&(\imath_{z,y}(p),0,\imath_{x,z}(q)).
\end{eqnarray*}
The neat immersion $\imath$ is called a \em neat embedding\em, if for all $i,j$ with $A\leq j<i\leq B$ the induced map
$$
\imath_{i,j}\colon \coprod_{(x,y),\gr{x}=i,\gr{y}=j}\mathcal{M}(x,y) \to \E^\mathbf{d}[j:i]
$$
is an embedding.
\end{definition}

\begin{definition}
 Let $\imath$ be a neat immersion of a flow category $\Cat$ relative $\mathbf{d}$. A \em coherent framing $\varphi$ \em of $\imath$ is a framing for the normal bundle $\nu_{\imath_{x,y}}$ for all objects $x,y$, such that the product framing of $\nu_{\imath_{z,y}}\times \nu_{\imath_{x,z}}$ equals the pullback framing of $\circ^\ast \nu_{\imath_{x,y}}$ for all objects $x,y,z$.

A \em framed flow category \em is a triple $(\Cat,\imath,\varphi)$, where $\Cat$ is a flow category, $\imath$ a neat immersion and $\varphi$ a coherent framing of $\imath$.
\end{definition}

Given a framed flow category $(\Cat,\imath,\varphi)$, we can associate a chain complex $C_\ast(\Cat,\imath,\varphi)$ as follows. The $n$-th chain group is the free abelian group generated by the objects with grading $n$, and if $x,y\in Ob$ are objects with $|x|=|y|+1=n$, the coefficient in the boundary between $x$ and $y$ is the sign of the $0$-dimensional compact moduli space $\mathcal{M}(x,y)$ obtained from the framing in $\R^{d_{n-1}}=\E^\mathbf{d}[n-1:n]$. The condition of a coherent framing ensures that we get indeed a chain complex.

Dually we can also associate a cochain complex $C^\ast(\Cat,\imath,\varphi)$.

There are various ways to think of a framing of an immersed manifold. For our constructions it will be useful to think of a framing of an immersion $\imath\colon \mathcal{M}(a,b)\to \E^{\bf d}[|b|:|a|]$ as an immersion 
$$\varphi\colon \mathcal{M}(a,b)\times [-\varepsilon,\varepsilon]^{d_{|b|}+\cdots d_{|a|-1}}\to \E^{\bf d}[|b|:|a|]$$
such that $\varphi(x,0)=\imath(x)$ for all $x\in \mathcal{M}(a,b)$.

\subsection{The associated stable homotopy type}
\label{subsec:CJS_space}
As was briefly alluded to in the introduction, there is a process that allows one to construct, from a given framed flow category $\Cat$, a CW complex $|\Cat|$. This CW complex is constructed in such a way that its cellular cochain complex $C^{*}(|\Cat|)$ is isomorphic (after some grading shift) to the cochain complex $C^{*}(\Cat)$ obtained from $\Cat$. An outline of the construction of $|\Cat|$ was first given by Cohen-Jones-Segal (inspired by Franks \cite{Franks}) in attempt to achieve a spectrum (or space-level refinement) for Floer homology. 
As expressed in \cite{CJS}, their attempt was not entirely successful but they do outline a detailed recipe for constructing a CW complex from any given framed flow category; a recipe that was implemented successfully in \cite{LipSarKhov} to produce such a spectrum for Khovanov cohomology, namely the Lipshitz-Sarkar stable homotopy type. The immediate output of the Cohen-Jones-Segal machine is a CW complex that we shall define in this section. It should be noted that the Lipshitz-Sarkar 
stable homotopy type is defined as a (de-)suspension of this output where the input is a particular Khovanov flow category, constructed in \cite{LipSarKhov}.

\begin{definition} \label{cwcomplex}
Let $(\Cat, \imath , \varphi)$ be a framed flow category relative ${\bf d}$. For an arbitrary object $a$ in ${\rm Ob}(\Cat)$ of degree $m$, recall that for each object $b$ in ${\rm Ob}(\Cat)$ of degree $n<m$, we have a framed neat embedding
\[
\imath_{a,b} : \mathcal{M}(a,b) \times [-\varepsilon, \varepsilon]^{d_n + \cdots + d_{m-1}} \rightarrow [-R,R]^{d_n} \times [0,R] \times \cdots \times [0,R] \times [-R,R]^{d_{m-1}}
\]
where $R$ is chosen to be large enough that all moduli spaces $\mathcal{M}(a,b)$ can be embedded in this way. Moreover, choose $B<A \in \Z$ as in Definition \ref{def:neat_immersion} so that every object $a \in {\rm Ob}(\Cat)$ satisfies $B \leq |a| \leq A$. The CW complex $|\Cat|$ consists of one $0$-cell (the basepoint) and one $(d_B + \cdots + d_{A-1} -B + m)$-cell $\mathcal{C}(a)$ for every object $a$ of $\Cat$ defined as
\[
[0,R] \times [-R,R]^{d_{B}} \times \cdots \times [-R,R]^{d_{m-1}} \times \{ 0 \} \times [- \varepsilon , \varepsilon]^{d_{m}} \times \{ 0 \} \times \cdots \times \{0\} \times [-\varepsilon , \varepsilon]^{d_{A-1}} {\rm .}
\]

Each cell $\mathcal{C}(a)$ is considered a subset of a different copy of the ambient space $\R_+ \times \R^{d_B} \times \cdots \times \R_+ \times \R^{d_{A-1}}$. The neat embedding $\imath$ can be used to identify particular subsets 
\begin{equation} \label{product_in_boundary_cell}
\mathcal{M}(a,b) \times \mathcal{C}(b) \cong \mathcal{C}_b(a) \subset \partial_n \mathcal{C}(a)
\end{equation}
in the following way:
\begin{align*}
\mathcal{C}_b(a) = & [0,R] \times [-R,R]^{d_{B}} \times \cdots \times [-R,R]^{d_{n-1}} \times \{ 0 \} \times \\
& \imath_{a,b} \big( \mathcal{M}(a,b) \times [-\varepsilon, \varepsilon]^{d_n + \cdots + d_{m-1}} \big) \times \\
& \{ 0 \} \times [-\varepsilon, \varepsilon]^{d_m} \times \cdots \times \{0\} \times [-\varepsilon , \varepsilon]^{d_{A-1}} \\
& \subset \partial \mathcal{C}(a) {\rm .}
\end{align*}

It will be useful to introduce notation for this identification by letting
\begin{equation} \label{gamma_identification}
\Gamma_{a,b}: \mathcal{M}(a,b) \times \mathcal{C}(b) \rightarrow \partial_n \mathcal{C}(a)
\end{equation}
be the identification $\mathcal{M}(a,b) \times \mathcal{C}(b) \cong \mathcal{C}_b(a)$. Let
\begin{equation} \label{C_shift}
C = d_B + \cdots + d_{A-1} -B {\rm .}
\end{equation}
Then the attaching map for each cell $\partial \mathcal{C}(a) \rightarrow |\Cat|^{(C+m-1)}$ is defined via the Thom construction for each embedding into $\partial \mathcal{C}(a)$ simultaneously. That is, for each subset $\mathcal{M}(a,b) \times \mathcal{C}(b) \cong \mathcal{C}_b(a) \subset \partial \mathcal{C}(a)$, the attaching map projects to $\mathcal{C}(b)$ (which carries trivialisation information), and sends the rest of the boundary $\partial \mathcal{C}(a) \setminus \bigcup_{b} \mathcal{C}_b(a)$ to the basepoint.
\end{definition}
The fact that this construction is well-defined is shown in \cite[Lemma 3.25]{LipSarKhov} which also describes how the attaching maps give a natural isomorphism of chain complexes.

The isomorphism type of $|\Cat|$ is shown to be independent of the choice of real numbers $R$ and $\varepsilon$ in \cite[Lemma 3.25]{LipSarKhov} and, by considering a one-parameter family of framed neat embeddings between two perturbations $(\imath_0 , \varphi_0)$ and $(\imath_1 , \varphi_1)$ of $(\imath, \varphi)$, it can be shown that the CW complexes $|\Cat|_{\imath_0, \varphi_0}$ and $|\Cat|_{\imath_1, \varphi_1}$ are isomorphic (also \cite[Lemma 3.25]{LipSarKhov}). A choice of different $A$, $B$, and ${\bf d}$ gives rise to a stably homotopy equivalent CW complex (see \cite[Lemma 3.26]{LipSarKhov}) that is a suspension of the original CW complex a number of times.

This construction of $|\Cat|$ is also shown to agree with the construction of \cite{CJS} in \cite[Prop3.27]{LipSarKhov}, and is referred to as the {\it realisation} of $\Cat$.

\section{A flow category associated to a glued diagram}
\label{sec:assoc_flow_cat}
The framed flow category that we associate to a matched diagram will be obtained in a somewhat similar way as that associated to a diagram in \cite{LipSarKhov}.  The first difference is that we shall require a different flow category than the cube flow category downstairs. Again this category is essentially obtained from a Morse product construction.  In the case of an elementary tangle with two crossings the factor category requires three consecutively graded objects, with one morphism point between the first two, two morphism points between the last two, and the $1$-dimensional moduli space given by an interval. Morse-theoretically, this category can be obtained as in Figure \ref{fig:drawing_of_a_sock}. For more general diagram decompositions, the construction is a little more involved.
\begin{figure}[ht]
 \includegraphics[width=3cm,height=2.5cm]{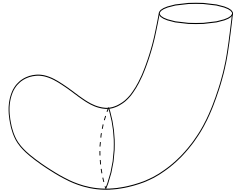}
 
 \caption{A Morse function on a sock.}
\label{fig:drawing_of_a_sock}
\end{figure}

\subsection{The sock flow category}
\label{subsec:the_n-sock}
\label{subsec:the_n_sock}

For any integer $n\geq 2$ and $N\geq 1$ one can define the Lens space $L^{2N+1}(n)=L^{2N+1}(n;1,\ldots,1)$ as the quotient space of $S^{2N+1}\subset \C^{N+1}$ by the action of $\Z/n$ given via
\[
 t\cdot (z_0,\ldots,z_N) = (e^{2\pi i/n}z_0,\ldots,e^{2\pi i/n}z_N)
\]
where $t$ represents a generator of $\Z/n$. It is well known that $H_j(L^{2N+1}(n);\Z/n)\cong \Z/n$ for all $j=0,\ldots,2N+1$ and $0$ otherwise. We want to define a Morse function $g\colon L^{2N+1}(n)$ and a Morse-Smale gradient for it which is perfect with respect to $\Z/n$ coefficients.

Let $f\colon \CP^N\to \R$ be the well-known Morse function given by
\[
 f([z_0:\cdots:z_N]) = \sum_{j=1}^N j|z_j|^2,
\]
where $\sum_{j=0}^N|z_j|^2=1$. Now let $F\colon S^{2N+1}\to \R$ be given by $F=f\circ p$, where $p\colon S^{2N+1}\to \CP^N$ is the quotient map
\[
 p(z_0,\ldots,z_N)=[z_0:\cdots :z_N].
\]
Then $F$ is a Morse-Bott function with $N+1$ critical manifolds given by
\[
 S_j=\{(0,\ldots,0,z_j,0,\ldots,0)\in S^{2N+1}\}
\]
each of which is a circle and whose index is given by $2j$, for $j=0,\ldots,N$.

For $n\geq 2$ let $g_n\colon S^1\to \R$ be given by
\[
 g_n(z)=z^n+\bar{z}^n,
\]
a $\Z/n$-invariant Morse function with $n$ maxima and $n$ minima, where $\Z/n$ acts on $S^1$ by
\[
 t\cdot z = z\cdot e^{2\pi i/n}.
\]
Using $g_n$, we can modify $F$ to a $\Z/n$-invariant Morse function $F_n$ on $S^{2N+1}$ which has exactly $2n(N+1)$ critical points, and which induces a $\Z/n$-perfect Morse function on $L^{2N+1}(n)$. In fact, it induces a $\Z/n$-perfect Morse function on any Lens space $L^{2N+1}(n;q_1,\ldots,q_N)$, but we will only need a particular one.

For $t\in \R$ denote by $\psi_t\colon S^1\to S^1$ the $\Z/n$-equivariant negative gradient flow of $g_n$. We can extend the flow $\psi_t$ to $\C$ radially. Then $\Phi\colon S^{2N+1}\times \R\to S^{2N+1}$ given by
\[
 ((z_0,\ldots,z_N),t) \mapsto \frac{(\psi_tz_0,e^{-t}\psi_tz_1,e^{-2t}\psi_tz_2,\ldots,e^{-Nt}\psi_tz_N)}{\|(\psi_tz_0,e^{-t}\psi_tz_1,e^{-2t}\psi_tz_2,\ldots,e^{-Nt}\psi_tz_N)\|}
\]
is a $\Z/n$-equivariant negative gradient flow for $F_n$.

In particular, this induces a negative gradient flow on $L^{2N+1}(n)$ to the induced Morse function $g\colon L^{2N+1}(n)\to \R$. It is easy to see that the critical points are given by
\begin{align*}
 p_{2j} &= [0,\ldots,0,e^{\pi i/n},0,\ldots,0] \\
 p_{2j+1} &= [0,\ldots,0,1,0,\ldots,0]
\end{align*}
for $j=0,\ldots,N$ with the non-zero entry in the $j$-th coordinate, and the stable and unstable manifolds intersect transversely. We can therefore form the Morse flow category.

It is clear that $\M(p_{2j+1},p_{2j})=\{P,M\}$ consists of two points, one will be framed $+$ and the other will be framed $-$.

\begin{lemma}\label{lem:lens_comp}
 For $0\leq j < k \leq N$ and $\varepsilon_1,\varepsilon_2\in \{0,1\}$ the open moduli spaces $\tilde{\M}(p_{2k+\varepsilon_1},p_{2j+\varepsilon_2})$ are a disjoint union of $n$ components, each of which is diffeomorphic to an open disc. Furthermore, the action of $\Z/n$ on $L^{2N+1}(n)$ given by
\begin{equation}\label{action:Z/n}
 t\cdot [z_0,z_1,\ldots,z_N] = [e^{2\pi i/n}z_0,z_1,\ldots,z_N]
\end{equation}
induces an action on moduli spaces which permutes the components.
\end{lemma}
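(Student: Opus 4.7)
The plan is to lift the situation from $L^{2N+1}(n)$ to $S^{2N+1}$ via the covering map, where the flow $\Phi$ is given by the explicit formula above and admits a direct parametrisation. Each $p_{2j+\varepsilon}$ has $n$ lifts $P_{2j+\varepsilon,m}$ ($m=0,\ldots,n-1$), obtained by replacing the nonzero entry of $p_{2j+\varepsilon}$ by the $n$ minima $e^{(2m+1)\pi i/n}$ of $g_n$ (if $\varepsilon=0$) or by the $n$ maxima $e^{2\pi i m/n}$ of $g_n$ (if $\varepsilon=1$). Since $\Phi$ is $\Z/n$-equivariant, every trajectory in $L^{2N+1}(n)$ lifts uniquely once the initial lift is fixed, and therefore
\[
\tilde{\M}(p_{2k+\varepsilon_1},p_{2j+\varepsilon_2}) \;=\; \coprod_{m=0}^{n-1}\tilde{\M}_S\bigl(P_{2k+\varepsilon_1,0},\,P_{2j+\varepsilon_2,m}\bigr),
\]
and it suffices to show each summand is a nonempty open disc of the expected dimension $2(k-j)+\varepsilon_1-\varepsilon_2-1$.

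Writing $z_i=r_i e^{i\theta_i}$, the explicit formula for $\Phi_t$ together with the radial extension of $\psi_t$ makes the backward limit transparent: as $t\to-\infty$, the factor $e^{-it}$ amplifies higher-indexed coordinates, so the limit lies on $S_\ell$ with $\ell$ the largest index satisfying $r_\ell>0$; and the critical point of $g_n$ in $S_\ell$ that is reached depends on whether $\theta_\ell$ is fixed exactly at a minimum of $g_n$ (giving $P_{2\ell,m}$) or lies in the open arc of length $2\pi/n$ around a maximum of $g_n$ (giving $P_{2\ell+1,m}$). A dual description, with $t\to+\infty$ and ``largest'' replaced by ``smallest'', governs $W^s$. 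Intersecting, a point of $W^u(P_{2k+\varepsilon_1,0})\cap W^s(P_{2j+\varepsilon_2,m})$ has $z_i=0$ for $i<j$ or $i>k$; the arguments $\theta_j,\theta_k$ are constrained to either a single point or an open arc of length $2\pi/n$ depending on $\varepsilon_2,\varepsilon_1$; the intermediate coordinates $z_{j+1},\ldots,z_{k-1}\in\C$ are free; and $\sum r_i^2=1$ with $r_j,r_k>0$.

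Under $\Phi_t$ the ratio $r_k/r_j$ is multiplied by $e^{-(k-j)t}$, so $\{r_k=r_j\}$ is a cross-section of the $\R$-action. Setting $r_j=r_k=r>0$ and solving the sphere constraint identifies $\tilde{\M}_S$ with
\[
\Bigl\{(z_{j+1},\ldots,z_{k-1})\in\C^{k-j-1}\;\Bigm|\;\textstyle\sum_{i}|z_i|^2<1\Bigr\}\;\times\; A_j\;\times\; A_k,
\]
where $A_j$ is a point if $\varepsilon_2=1$ and an open interval if $\varepsilon_2=0$, and analogously $A_k$ is a point if $\varepsilon_1=0$ and an open interval if $\varepsilon_1=1$. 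This is a product of an open ball with Euclidean factors, hence diffeomorphic to $\R^{2(k-j)+\varepsilon_1-\varepsilon_2-1}$, as required. For the $\Z/n$-action, the lift $\tilde\sigma(z_0,\ldots,z_N)=(e^{2\pi i/n}z_0,z_1,\ldots,z_N)$ of the generator fixes $P_{2k+\varepsilon_1,0}$ (since $k\geq 1$) and sends $P_{2j+\varepsilon_2,m}$ to $P_{2j+\varepsilon_2,m+1}$ when $j=0$ and to itself when $j\geq 1$; in either case it preserves the decomposition into the $n$ components. The hard part is verifying the description of the backward/forward limit of $\Phi_t$: one must check coordinate-wise that the largest/smallest nonzero $r_\ell$ together with the arc structure of $\psi_t$ really do govern the limit, and in particular that a coordinate $\theta_\ell$ sitting exactly at a min/max of $g_n$ survives into the limit rather than being dominated by the remaining coordinates.
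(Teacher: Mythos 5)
Your argument is correct and recovers the same parametrisation of the moduli spaces as the paper, but it is packaged differently. The paper works directly downstairs in $L^{2N+1}(n)$: it fixes a homogeneous representative with $\arg z_k$ confined to a fundamental domain, which forces $\arg z_j$ into one of $n$ arcs or values, identifies the components this way, and then shows each component is an open disc by a star-likeness argument (every point lies on a ray to a distinguished ``origin'' in that component). You instead lift to the cover $S^{2N+1}$, index the components by the choice of lift $P_{2j+\varepsilon_2,m}$ of the lower critical point, and exhibit each component as the explicit cross-section $\{r_j=r_k\}$ of the $\R$-action, which you parametrise as an open ball times an interval or a point. The two routes compute the same intersection of stable and unstable manifolds; the covering-space framing makes the ``exactly $n$ components, permuted by $\Z/n$'' statement slightly more structural, and your cross-section argument is more explicit about the diffeomorphism type (it produces the open disc as a product, rather than by star-likeness), while the paper avoids any discussion of lifting.

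The worry you flag at the end --- whether a coordinate $\theta_\ell$ pinned at a critical point of $g_n$ can be ``dominated'' by the other coordinates --- is in fact not an issue, and you should simply close it out rather than leave it open. In the explicit formula for $\Phi_t$, the normalisation divides by a positive real scalar, so it affects only the moduli $r_\ell$ and leaves each argument $\arg z_\ell$ untouched; and the radially extended $\psi_t$ preserves modulus, so $\arg(\Phi_t(z)_\ell) = \arg(\psi_t z_\ell)$ evolves autonomously by the one-dimensional flow on $S^1$, independent of all other coordinates. Meanwhile the moduli converge to a coordinate unit vector at a rate governed by the exponential factors $e^{-\ell t}$. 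Thus a coordinate pinned at a critical value of $g_n$ stays pinned, and the rest of the angular coordinates converge within their arcs, exactly as you described. Adding this one-sentence observation makes the proof complete.
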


\begin{proof}
 Points $x\in W^s(p_{2k+\varepsilon_1})\cap W^u(p_{2j+\varepsilon_2})$ are of the form
\[
 x=[0,\ldots,0,z_j,\ldots,z_k,0,\ldots,0]
\]
with $z_j\not=0\not=z_k$. Furthermore, we can assume that $\arg z_k\in (-\pi/n,\pi/n)$ for $\varepsilon_1=1$ and $\arg z_k=\pi/n$ for $\varepsilon_1=0$.

Also, if $\varepsilon_2=0$, then
\[
 \arg z_j \in (0,2\pi/n)\cup (2\pi/n,4\pi/n)\cup \cdots \cup ((2n-2)\pi/n,2\pi)
\]
and if $\varepsilon_2=1$, then $\arg z_j\in \{0,2\pi/n,\ldots,(2n-2)\pi/n\}$. Therefore there are at least $n$ components depending on the component that $z_j$ is in. To see that these are all the components, and each component is a disc, consider the point $$[0,\ldots,0,e^{(1-\varepsilon_2)\pi/n}/\sqrt{2},0,\ldots,0,e^{\varepsilon_1\pi/n}/\sqrt{2},0,\ldots,0].$$
This point can be considered as an origin, and all points $x\in W^s(p_{2k+\varepsilon_1})\cap W^u(p_{2j+\varepsilon_2})$ are in a straight line to this point or to the analogue point in a different component, and we can identify each component as a star-like open subset of Euclidean space, which shows that each component is diffeomorphic to an open disc.

The statement about the action on moduli spaces is clear, with the action being trivial for $j>0$.
\end{proof}

In particular, we get $\M(p_{2j},p_{2j-1})=\{P_1,\ldots,P_n\}$ for $j=1,\ldots,N$, where each $P_k$ corresponds to the trajectory $t\to [0,\ldots,0,1,e^{-t+(2k-1)\pi i/n},0,\ldots,0]$.

The $1$-dimensional moduli spaces $\M(p_{2j},p_{2j-2})$ are easily identified as $n$ intervals with boundaries pairing as $(P_k,P)$ with $(P_{k+1},M)$ for $k=1,\ldots,n-1$ and $(P_n,P)$ with $(p_1,M)$. Similarly, $\M(p_{2j+1},p_{2j-1})$ consists of $n$ intervals with boundaries pairing as $(P,P_k)$ with $(M,P_{k+1})$ for $k=1,\ldots,n-1$ and $(P,P_n)$ with $(M,P_1)$.

We will also need to know the $2$-dimensional moduli spaces $\M(p_{2j+1},p_{2j-2})$. As the boundary is determined by the lower dimensional moduli spaces between these objects, and we know the number of components by  Lemma \ref{lem:lens_comp}, we see that the moduli space are squares with corners given by $(P,P_k,P)$, $(P,P_{k+1},M)$, $(M,P_{k+2},M)$ and $(M,P_{k+1},P)$ for $k=1,\ldots,n$, where we identifiy $P_1=P_{n+1}$ and $P_2=P_{n+2}$. As we shall see later, we do not need to know other moduli spaces.

Given $r>0$, we can define a flow category $\cS^n_r$ whose objects are given by $0,\ldots,r$, and the moduli spaces are given by
\[
 \M(k,j)=\M(p_{k+1},p_{j+1})/(\Z/n).
\]
In particular, $\M(k,j)=\M(p_{k+1},p_{j+1})$ if $j>0$. Here $N$ is chosen so that $2N\geq r$ to ensure that all moduli spaces are defined.

For $r<0$ and $k>j\in \{r,r+1,\ldots,0\}$ we define
\[
 \M(k,j)=\M(p_{-j+1},p_{-k+1})/(\Z/n)
\]
to get the flow category $\cS^n_r$ which is dual to $\cS^n_{-r}$. Note that this is a quotient of the Morse flow category of the Morse function $-g$.

For $\mathbf{r}=(r_1,\ldots,r_m)\in (\Z-\{0\})^m$ let $G\colon (L^{2N+1}(n))^m\to \R$ be the Morse function given by
\[
 G(x_1,\ldots,x_m) = \sgn(r_1) g(x_1) + \cdots + \sgn (r_m) g(x_m)
\]
and $\Cat_G$ the corresponding Morse-flow category, where $2N\geq \max\{|r_1|,\ldots,|r_m|\}$. The group $(\Z/n)^m$ acts on $(L^{2N+1}(n))^m$ by letting each coordinate in $(\Z/n)^m$ act on the corresponding coordinate in $(L^{2N+1}(n))^m$ by (\ref{action:Z/n}). This action induces an action of $(\Z/n)^m$ on the moduli spaces of $\Cat_G$ and we define the flow category $\cS^n_\mathbf{r}$ as follows.

\begin{definition}
\label{def:catSn}
The objects of $\cS^n_\mathbf{r}$ are given by $\mathbf{a}=(a_1,\ldots,a_m)\in \Z^m$ with $0\leq a_j \leq r_j$ if $r_j>0$ or $r_j \leq a_j \leq 0$ if $r_j<0$, for all $j=1,\ldots,m$. The grading is given by $|\mathbf{a}|=a_1+\cdots+a_m$ and the moduli spaces are given by
\begin{multline*}
 \M((a_1,\ldots,a_m),(b_1,\ldots,b_m)) = \\ \M_G((p_{|a_1|+1},\ldots,p_{|a_m|+1}),(p_{|b_1|+1},\ldots,p_{|b_m|+1}))/(\Z/n)^m.
\end{multline*}
\end{definition}

Note that since we use the negative Morse function $-g$ for coordinates with $r_j<0$, we do not have to interchange the role of $a_j$ and $b_j$ in those coordinates.

\begin{proposition}
 Let $\mathbf{r}\in (\Z-\{0\})^m$ and $a,b$ objects in $\mathscr{S}^n_\mathbf{r}$. Then $\mathcal{M}(a,b)$ is a disjoint union of discs.
\end{proposition}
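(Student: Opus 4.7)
The plan is to exploit the product structure $G = \sum_j \sgn(r_j) g(x_j)$ on $(L^{2N+1}(n))^m$ to reduce to the single-factor case, where Lemma \ref{lem:lens_comp} already identifies open moduli spaces as unions of open discs.

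First I would observe that a gradient trajectory of $G$ from $\mathbf{p}=(p_1,\ldots,p_m)$ to $\mathbf{q}=(q_1,\ldots,q_m)$ decomposes as an $m$-tuple of trajectories, constant in those coordinates $i$ where $p_i=q_i$. Setting $S = \{i : p_i \neq q_i\}$ and quotienting by the diagonal $\R$-translation, the open moduli space factors as
\[
\tilde{\M}_G(\mathbf{p},\mathbf{q}) \cong \prod_{i \in S} \tilde{\M}(p_i,q_i) \times \R^{|S|-1},
\]
where the $\R^{|S|-1}$ parameterizes relative time shifts between factors. Compactifying by adding broken flow lines and allowing relative times to tend to $\pm\infty$, the $\R^{|S|-1}$ factor closes to the $(|S|-1)$-dimensional permutohedron $P_{|S|}$ whose vertices record total orderings on $S$ (the order in which each factor's trajectory completes). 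This is the standard product-of-flow-categories structure and yields
\[
\M_G(\mathbf{p},\mathbf{q}) \cong \prod_{i \in S} \M(p_i,q_i) \times P_{|S|}.
\]
Since $P_{|S|}$ is a convex polytope, hence a closed disc, it suffices to show that each factor $\M(p_i,q_i)$ in a single $L^{2N+1}(n)$ is a disjoint union of closed discs.

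For the single-factor claim I would induct on the dimension $d$. The cases $d=0,1,2$ are already explicit in the discussion preceding Definition \ref{def:catSn} (finite sets of points, $n$ intervals, and $n$ squares respectively). For the inductive step, Lemma \ref{lem:lens_comp} shows that each component of the open interior is star-shaped about an explicit origin, hence an open disc. The boundary consists of products of lower-dimensional moduli spaces, which are disjoint unions of products of discs by the inductive hypothesis. One verifies, using the explicit description of broken flow lines between critical levels, that these boundary strata assemble combinatorially into a sphere, so each component is a compact manifold with corners whose interior is an open disc and whose boundary is a sphere, hence a closed disc.

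Combining the above, $\M_G(\mathbf{p},\mathbf{q})$ is a disjoint union of discs, and the $(\Z/n)^m$-action permutes components (acting nontrivially only on those coordinates with $q_i = p_1$, by Lemma \ref{lem:lens_comp}), so the quotient $\M_{\cS^n_\mathbf{r}}(a,b)$ remains a disjoint union of discs. The main obstacle is the inductive step for the single-factor case: verifying that the combinatorial assembly of the boundary forms a topological sphere rather than something more exotic. An alternative would be to give each component an explicit polytopal parametrization using the flow data (for example as a cube built from the angular and radial parameters highlighted in Lemma \ref{lem:lens_comp}), bypassing the induction at the cost of more detailed bookkeeping.
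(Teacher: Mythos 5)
Your overall strategy is the paper's: reduce to the Morse flow category of $G$ on the product of lens spaces, then split off the product structure and handle a single lens-space factor. The differences lie in how each half is executed. For the product step you assert in one shot that $\M_G(\mathbf{p},\mathbf{q}) \cong \prod_{i\in S}\M(p_i,q_i)\times P_{|S|}$ with a permutohedron factor. The paper instead proves a binary version (Lemma~\ref{lem_prodflowcat}): $\M_{\Cat_F}((a,p),(b,q))$ is PL-homeomorphic to $\M_{\Cat_f}(a,b)\times\M_{\Cat_g}(p,q)\times[0,1]$, established by an inner induction on relative index and an explicit cylinder assembly, and then iterated to give $\prod_i\M(p_i,q_i)\times[0,1]^{|S|-1}$. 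A permutohedron and a cube of the same dimension are both discs, so either delivers the conclusion, but your one-shot permutohedron identification would itself need a proof of comparable length, and note it can only be a PL statement — the paper explicitly flags that the corner structures do \emph{not} literally match, which is why it carefully phrases the lemma in PL terms.

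For the single-factor step there is a genuine gap, which you have honestly identified yourself. Lemma~\ref{lem:lens_comp} gives star-shapedness of the \emph{open} components, hence open discs; and the inductive hypothesis controls the boundary strata. But this does not, by itself, force the compactification to be a closed disc: the claim that the strata assemble into a topological sphere is precisely the missing content, and a compact manifold-with-corners with open-disc interior is not automatically a disc (one would be invoking Poincar\'e/Schoenflies-type results one dimension at a time rather than producing the combinatorial assembly). The paper avoids this by inducting on the number of factors $m$ and asserting for the base case $m=1$ that all moduli spaces for the single lens space are disjoint unions of \emph{cubes} — which is essentially the fallback you propose at the end, an explicit polytopal parametrization from the angular and radial flow data. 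Following that alternative is the right move and would close the gap; the induction-on-dimension argument as written does not.
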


\begin{proof}
 From the construction of the flow category $\mathscr{S}^n_\mathbf{r}$ it is enough to show that the same holds for the flow category of the Morse function $G$ defined on the product of lens spaces. To see that this holds we use induction on $m$. For $m=1$ it follows from the description of the Morse flow category given above, where all moduli spaces are disjoint unions of cubes. The induction step follows from Lemma \ref{lem_prodflowcat} below.
\end{proof}

\begin{lemma}
 \label{lem_prodflowcat}
Let $f\colon M\to \R$ and $g\colon N \to \R$ be Morse functions on closed, smooth manifolds $M$ and $N$, and let $v_M$, $v_N$ be Morse-Smale gradients for the respective Morse function. Let $\Cat_F$ be the Morse flow category of the Morse function $F\colon M\times N\to \R$ given by $F(x,y)=f(x)+g(y)$ with Morse-Smale gradient $v(x,y)=(v_M(x),v_N(y))$. If $a$, $b$ are critical points of $f$ with $\mathrm{ind}(a)\geq \mathrm{ind}(b)$ and $p$, $q$ are critical points of $g$ with $\mathrm{ind}(p)\geq \mathrm{ind}(q)$, then $\mathcal{M}_{\Cat_F}((a,p),(b,q))$ is PL-homeomorphic to
\begin{itemize}
\item $\mathcal{M}_{\Cat_f}(a,b)\times \mathcal{M}_{\Cat_g}(p,q)\times [0,1]$ if $\mathrm{ind}(a)> \mathrm{ind}(b)$ and $\mathrm{ind}(p)> \mathrm{ind}(q)$.
\item $\mathcal{M}_{\Cat_f}(a,b)$ if $\mathrm{ind}(a)> \mathrm{ind}(b)$ and $p=q$.
\item $\mathcal{M}_{\Cat_g}(p,q)$ if $a=b$ and $\mathrm{ind}(p)> \mathrm{ind}(q)$.
\end{itemize}
\end{lemma}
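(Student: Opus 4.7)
The plan is to parametrize the open moduli spaces directly, then understand the compactification factor-by-factor. The two bullets in which one index is unchanged are almost immediate. If $a=b$, then any flow line of $v(x,y)=(v_M(x),v_N(y))$ from $(a,p)$ to $(a,q)$ must have its $M$-component constantly equal to $a$ (otherwise the $f$-value would strictly decrease, contradicting $a=b$). Hence $(\gamma_M,\gamma_N)\mapsto \gamma_N$ gives a bijection between flowlines, which descends to a homeomorphism of the unparametrized open moduli spaces and, by inspecting broken flowlines, to a PL-homeomorphism $\mathcal{M}_{\Cat_F}((a,p),(a,q))\cong\mathcal{M}_{\Cat_g}(p,q)$. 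The case $p=q$ is symmetric.

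For the substantive bullet, suppose $\mathrm{ind}(a)>\mathrm{ind}(b)$ and $\mathrm{ind}(p)>\mathrm{ind}(q)$. Since $W^s(a,p)=W^s(a)\times W^s(p)$ and $W^u(b,q)=W^u(b)\times W^u(q)$, I first identify the open moduli space as
\[
\tilde{\mathcal{M}}_{\Cat_F}((a,p),(b,q))=\bigl((W^s(a)\cap W^u(b))\times(W^s(p)\cap W^u(q))\bigr)/\R,
\]
where $\R$ acts diagonally by the product flow. Fix regular level sets $f^{-1}(c_1)$ and $g^{-1}(c_2)$ that slice each $W^s\cap W^u$ transversally, providing canonical parametrizations of $\tilde{\mathcal{M}}_{\Cat_f}(a,b)$ and $\tilde{\mathcal{M}}_{\Cat_g}(p,q)$. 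A flow line in the product is then determined by a point in each slice together with the relative time offset $s\in\R$ needed to synchronize the two components, giving a diffeomorphism
\[
\tilde{\mathcal{M}}_{\Cat_F}((a,p),(b,q))\cong \tilde{\mathcal{M}}_{\Cat_f}(a,b)\times \tilde{\mathcal{M}}_{\Cat_g}(p,q)\times\R.
\]

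Next I compactify. The $\R$-factor compactifies to $[-\infty,+\infty]\cong[0,1]$. As $s\to+\infty$ the $N$-flow completes before the $M$-flow begins, producing the broken trajectory $(a,p)\to(a,q)\to(b,q)$; as $s\to-\infty$ the broken trajectory is $(a,p)\to(b,p)\to(b,q)$. Using the product structure of the local gluing charts together with the compactification theorem of Austin-Braam \cite[Lm.2.6]{AusBra} applied to $F$, the compactified moduli space is obtained by adding precisely those broken trajectories that pass through some critical point $(c,r)$ with $\mathrm{ind}(a)\ge\mathrm{ind}(c)\ge\mathrm{ind}(b)$ and $\mathrm{ind}(p)\ge\mathrm{ind}(r)\ge\mathrm{ind}(q)$; in all cases the local structure at such a broken flow line is the product of the factor compactifications with the compactified time parameter. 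Assembling these local models globally yields a PL-homeomorphism with $\mathcal{M}_{\Cat_f}(a,b)\times\mathcal{M}_{\Cat_g}(p,q)\times[0,1]$.

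The main obstacle is the last matching step: on the right-hand side the corner strata come independently from the three factors, whereas on the left they come from breakings at intermediate critical points in either factor (or both simultaneously). I expect to verify by hand that a breaking with both $c$ and $r$ intermediate corresponds to a corner in both Morse factors (contributing a codimension-two stratum with the interval factor undisturbed), while a breaking in which one of $c,r$ equals an endpoint of its flow corresponds to an endpoint of $[0,1]$ combined with a corner in exactly one Morse factor. This combinatorial bookkeeping is what forces the answer to be PL rather than smooth, because the relative-time coordinate $s$ degenerates at the endpoints of $[0,1]$ in a manner that is not smoothly modelled on $[0,\infty)^k$ without a corner-smoothing.
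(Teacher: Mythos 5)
Your parametrization of the open moduli space as $\tilde{\mathcal{M}}_{\Cat_f}(a,b)\times\tilde{\mathcal{M}}_{\Cat_g}(p,q)\times\R$ via a relative time offset is correct, and the two degenerate bullets are fine. But the heuristic you announce for the compactification step (the one you flag as still to be verified) is wrong, and the error is exactly the content that makes this lemma nontrivial. You claim that a breaking through $(c,r)$ with both $c$ and $r$ strictly intermediate contributes a codimension-two stratum. In fact, by the lemma itself at lower relative index,
\[
\mathcal{M}_{\Cat_F}\bigl((c,r),(b,q)\bigr)\times\mathcal{M}_{\Cat_F}\bigl((a,p),(c,r)\bigr)
\cong\bigl(\mathcal{M}_f(c,b)\times\mathcal{M}_g(r,q)\times[0,1]\bigr)\times\bigl(\mathcal{M}_f(a,c)\times\mathcal{M}_g(p,r)\times[0,1]\bigr),
\]
which has dimension $\dim\mathcal{M}_f(a,b)+\dim\mathcal{M}_g(p,q)$, i.e.\ codimension one in $\mathcal{M}_F((a,p),(b,q))$, not two: each unparametrized piece of the broken trajectory carries its own offset parameter, so what looks like a simultaneous codimension-one degeneration in both factors acquires an extra interval. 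The same arithmetic shows that breakings at $(a,r)$, $(b,r)$, $(c,p)$, $(c,q)$ are also codimension one, contrary to your "endpoint of $[0,1]$ combined with a corner" description. So there is no face-by-face correspondence with $\mathcal{M}_f\times\mathcal{M}_g\times[0,1]$; the face poset of $\mathcal{M}_F$ is genuinely richer, which is precisely the paper's remark following the statement and the reason the conclusion is only PL and not smooth.

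The paper's proof copes with this by induction on $\mathrm{ind}(a,p)-\mathrm{ind}(b,q)$ and a direct assembly of the boundary: the faces at $(c,p)$ and $(c,q)$ glue (using the inductive hypothesis) into a cylinder $C_1$, those at $(a,r)$ and $(b,r)$ into a cylinder $C_2$, and the "extra" codimension-one face at $(c,r)$ is a square sitting exactly in the corner between $C_1$ and $C_2$. After filling these corners the total boundary is topologically the boundary of $\mathcal{M}_f\times\mathcal{M}_g\times[0,1]$, and the interior is filled using collar neighbourhoods. Your open-moduli-space picture could be the start of such an argument, but the "combinatorial bookkeeping" you defer is precisely this assembly, and it cannot be run as a stratum-preserving identification with the product.
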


Note that for $\mathrm{ind}(a)> \mathrm{ind}(b)$ and $\mathrm{ind}(p)> \mathrm{ind}(q)$ we usually do not get a diffeomorphism between $\mathcal{M}_{\Cat_F}((a,p),(b,q))$ and $\mathcal{M}_{\Cat_f}(a,b)\times \mathcal{M}_{\Cat_g}(p,q)\times [0,1]$, as there will usually be more corners in $\mathcal{M}_{\Cat_F}((a,p),(b,q))$.

\begin{proof}
The cases where $a=b$ or $p=q$ are easy to see, so we will focus on the case where $\mathrm{ind}(a)> \mathrm{ind}(b)$ and $\mathrm{ind}(p)> \mathrm{ind}(q)$. The proof is by induction on $\mathrm{ind}(a,p)-\mathrm{ind}(b,q) \geq 2$ with the root case being trivial.

To simplify our notations we will drop the $\mathscr{C}$ from the moduli spaces. We will write $ap$ for the critical point $(a,p)$ of $F$ and similarly with other combinations of critical points of $f$ and $g$.

Let $c$ be a critical point of $f$ with $\mathrm{ind}(a)>\mathrm{ind}(c)>\mathrm{ind}(b)$ and $r$ a critical point of $g$ with $\mathrm{ind}(p)>\mathrm{ind}(r)>\mathrm{ind}(q)$. Since $\mathscr{C}_F$ is a flow category, the boundary of $\mathcal{M}(ap,bq)$ is
\begin{equation*}
\begin{split}
 \partial \mathcal{M}(ap,bq)=& \mathcal{M}(bp,bq)\times \mathcal{M}(ap,bp) \cup \mathcal{M}(aq,bq)\times \mathcal{M}(ap,aq) \cup \\
& \bigcup_{(c,r)}\mathcal{M}(cp,bq)\times \mathcal{M}(ap,cp) \cup \mathcal{M}(cq,bq)\times \mathcal{M}(ap,cq) \cup \\
 & \phantom{(c,r)} {} \mathcal{M}(br,bq)\times \mathcal{M}(ap,br) \cup \mathcal{M}(ar,bq)\times \mathcal{M}(ap,ar) \cup \\
 & \phantom{(c,r)} {} \mathcal{M}(cr,bq) \times \mathcal{M}(ap,cr).
\end{split}
\end{equation*}
If $\mathrm{ind}(a)-\mathrm{ind}(b)=1$ or $\mathrm{ind}(p)-\mathrm{ind}(q)=1$, this picture simplifies, and the following arguments simplify also.

Note that $\mathcal{M}(bp,bq)\times \mathcal{M}(ap,bp) \cong \mathcal{M}(p,q)\times \mathcal{M}(a,b)$, and the same holds for $\mathcal{M}(aq,bq)\times \mathcal{M}(ap,aq)$. We want to show that $\mathcal{M}(ap,bq)$ is a cylinder between these two boundary parts. To distinguish these parts more easily, we write
$$\mathcal{M}(a,b)\boxtimes \mathcal{M}(p,q) \subset \mathcal{M}(ap,bq)$$
for $\mathcal{M}(bp,bq)\times \mathcal{M}(ap,bp)$ to indicate that these are the broken flow lines that first go from $ap$ to $bp$, and then from $bp$ to $bq$. Hence we also write $\mathcal{M}(aq,bq)\times \mathcal{M}(ap,aq)= \mathcal{M}(p,q)\boxtimes \mathcal{M}(a,b)$.

By induction hypothesis, we have 
\[
\mathcal{M}(cp,bq) \times \mathcal{M}(ap,cp) \cong (\mathcal{M}(c,b)\times \mathcal{M}(p,q) \times [0,1])\times \mathcal{M}(a,c)
\]
and
\[
 \mathcal{M}(cq,bq)\times \mathcal{M}(ap,cq) \cong \mathcal{M}(c,b) \times (\mathcal{M}(a,c)\times \mathcal{M}(p,q) \times [0,1])
\]
and we can think of these two boundary parts as combining to a cylinder $C_1$ between
$$\mathcal{M}(a,c)\boxtimes \mathcal{M}(c,b) \boxtimes \mathcal{M}(p,q) \subset \mathcal{M}(a,b)\boxtimes \mathcal{M}(p,q)$$
and $$\mathcal{M}(p,q)\boxtimes \mathcal{M}(a,c)\boxtimes \mathcal{M}(c,b) \subset \mathcal{M}(p,q)\boxtimes \mathcal{M}(a,b)$$
with $\mathcal{M}(a,c)\boxtimes \mathcal{M}(p,q) \boxtimes \mathcal{M}(c,b)$ in the middle.

Similarly, $\mathcal{M}(br,bq)\times \mathcal{M}(ap,br) \cup \mathcal{M}(ar,bq)\times \mathcal{M}(ap,ar)$ is a cylinder $C_2$ between
$$\mathcal{M}(a,b)\boxtimes \mathcal{M}(p,r) \boxtimes \mathcal{M}(r,q) \subset \mathcal{M}(a,b)\boxtimes \mathcal{M}(p,q)$$
and $$\mathcal{M}(p,r)\boxtimes \mathcal{M}(r,q)\boxtimes \mathcal{M}(a,b) \subset \mathcal{M}(p,q)\boxtimes \mathcal{M}(a,b)$$
with $\mathcal{M}(p,r)\boxtimes \mathcal{M}(a,b) \boxtimes \mathcal{M}(r,q)$ in the middle.

The two cylinders $C_1$ and $C_2$ intersect in two disjoint cylinders
\begin{multline*}
\mathcal{M}(r,q)\times(\mathcal{M}(c,b)\times \mathcal{M}(p,r)\times [0,1])\times \mathcal{M}(a,c)\,\sqcup \\ \mathcal{M}(c,b)\times (\mathcal{M}(a,c)\times \mathcal{M}(r,q)\times [0,1])\times \mathcal{M}(p,r).
\end{multline*}

Finally,
\begin{align*}
 \mathcal{M}(cr,bq) \times \mathcal{M}(ap,cr) & \cong (\mathcal{M}(c,b)\times \mathcal{M}(r,q)\times [0,1])\times \\
& (\mathcal{M}(a,c)\times \mathcal{M}(p,r)\times [0,1]),
\end{align*}
which we can think of as a square between
$$\mathcal{M}(a,c) \boxtimes \mathcal{M}(p,r) \boxtimes \mathcal{M}(c,b) \boxtimes \mathcal{M}(r,q),
\mathcal{M}(a,c) \boxtimes \mathcal{M}(p,r) \boxtimes \mathcal{M}(r,q) \boxtimes \mathcal{M}(c,b),$$
$$\mathcal{M}(p,r) \boxtimes \mathcal{M}(a,c) \boxtimes \mathcal{M}(c,b) \boxtimes \mathcal{M}(r,q),\mathcal{M}(p,r) \boxtimes \mathcal{M}(a,c) \boxtimes \mathcal{M}(r,q) \boxtimes \mathcal{M}(c,b),$$
which fits exactly between the two cylinders $C_1$ and $C_2$.
Because of the collar neighborhoods the boundary parts
\begin{multline*}
 \mathcal{M}(cp,bq)\times \mathcal{M}(ap,cp) \cup \mathcal{M}(cq,bq)\times \mathcal{M}(ap,cq) \cup \mathcal{M}(cr,bq) \times \mathcal{M}(ap,cr) \,\cup\\
\mathcal{M}(br,bq)\times \mathcal{M}(ap,br) \cup \mathcal{M}(ar,bq)\times \mathcal{M}(ap,ar)
\end{multline*}
combinatorally combine to a cylinder between
$$\mathcal{M}(a,c)\boxtimes \mathcal{M}(c,b)\boxtimes \mathcal{M}(p,q) \cup \mathcal{M}(a,b)\boxtimes \mathcal{M}(p,r)\boxtimes \mathcal{M}(r,q)$$
and $$\mathcal{M}(p,q)\boxtimes \mathcal{M}(a,c)\boxtimes \mathcal{M}(c,b) \cup \mathcal{M}(p,r)\boxtimes \mathcal{M}(r,q)\boxtimes \mathcal{M}(a,b).$$
Repeating this for every pair $(c,r)$ gives a combinatorial cylinder between $$\partial (\mathcal{M}(a,b)\boxtimes \mathcal{M}(p,q)) \mbox{ and } \partial(\mathcal{M}(p,q) \boxtimes \mathcal{M}(a,b)).$$
Using the interior of $\mathcal{M}(ap,bq)$ we can fill this cylinder of the boundary to a cylinder of $\mathcal{M}(a,b)\times \mathcal{M}(p,q)$. This involves the gluing maps which come from the existence of collar neighborhoods in flow categories \cite{Laures}, see also \cite{AusBra}.
\end{proof}

\subsection{A cover of the sock flow category}
\label{subsec:matched_category}
We begin with a few definitions whose parallels in the construction of the Khovanov flow category will be apparent to the \emph{cognoscenti}.  In what follows the number $n \geq 2$ is fixed and relates to the $\SL_n$ polynomial specialization of the HOMFLYPT polynomial (in particular $n=2$ refers Khovanov cohomology).

\begin{definition}
	\label{def:weight_match_config}
A \emph{weighted glued configuration} is an embedding of a finite number of circles in $\R^2$ together with a finite collection of arcs, disjoint from the circles and each other, except that the arcs meet the circles transversely.  Furthermore we make a choice of the endpoints of $A$, the L-endpoint and the R-endpoint  $\{ p_L(A), p_R(A) \} = \partial A$; and we require that each arc carries a weighting by a pair of integers $w(A) = (r,s)$ such that either $0 < s \leq r$ or $r \leq s < 0$.
\end{definition}

We note here that the choice of left and right endpoints will be shown to have no effect on the stable homotopy type eventually produced - see Proposition \ref{prop:left_and_right_invariance}.

\begin{definition}
	\label{def:label_weight_match_config}
A \emph{labelled weighted glued configuration} is a weighted resolution configuration in which each circle of the resolution is labelled with labels drawn from the set $\{ 1, x, x^2, \ldots x^{n-1} \}$.  We write this as a pair $(D,y)$ where $D$ is the weighted glued configuration and $y$ is the labeling.
\end{definition}

\begin{definition}
\label{def:partorder}
We define a partial order $<$ on labelled weighted glued configurations by first defining the primitive relation $<_1$.  We define $(E,z) <_1 (D,y)$ if

\begin{enumerate}
\item $(E,z)$ and $(D,y)$ are the same as embeddings of circles and arcs. In this case we require that the weightings of the arcs all agree except at a single arc $A$ where the weightings are $w(A) = (r,s)$ for $E$ and $w(A) = (r,s+1)$ for $D$.  We require that the labellings $y$ and $z$ agree everywhere apart from the circle or pair of circles which contain the endpoints of $A$.

\begin{enumerate}
\item If the arc $A$ has endpoints which lie on the same circle $C$ and $s$ is both positive and odd or both negative and even, then we require that $y(C) = x z(C)$

\item If the arc $A$ has endpoints which lie on the same circle $C$ and $s$ is both positive and even or both negative and odd, then we require that $z(C) = 1$ and $y(C) = x^{n-1}$.

\item If the arc $A$ has endpoints lying on different circles $p_L(A) \in C_L$ and $p_R(A) \in C_R$ and $s$ is both positive and odd or both negative and even, then we require that either $y(C_L) = z(C_L)$ and $y(C_R) = xz(C_R)$ or $y(C_L) = xz(C_L)$ and $y(C_R) = z(C_R)$.

\item If the arc $A$ has endpoints lying on different circles $p_L(A) \in C_L$ and $p_R(A) \in C_R$ and $s$ is both positive and even or both negative and odd then we require that $y(C_L)y(C_R) = x^{n-1}z(C_L)z(C_R)$.
\end{enumerate}

\item $(D,y)$ as a diagram forgetting weights and labels is the result of doing surgery along an arc $A$ of $(E,z)$ and then deleting the arc.  The weights on the common arcs of $(D,y)$ and $(E,z)$ agree, and the labels on the common circles also agree.  We require that the weighting $w(A) = (r,s)$ satisfies $s = -1$.

Alternatively, $(E,z)$ as a diagram forgetting weights and labels is the result of doing surgery along an arc $A$ of $(D,y)$ and then deleting the arc.  The weights on the common arcs of $(E,z)$ and $(D,y)$ agree, and the labels on the common circles also agree.  We require that the weighting $w(A) = (r,s)$ satisfies $s=1$.

In either case we require that the labellings $y$ and $z$ agree everywhere apart from on the three circles involved in the surgery.  On the three circles involved in the surgery we require the following:

\begin{enumerate}
\item If $D$ has one more circle than $E$, then we write the $C$ for the surgery-involved circle of $E$ and $C_1, C_2$ for the surgery-involved circles of $D$.  We require that $y(C_1)y(C_2) = x^{n-1}z(C)$.

\item If $D$ has one fewer circle than $E$, then we write the $C$ for the surgery-involved circle of $D$ and $C_1, C_2$ for the surgery-involved circles of $E$.  We require that $y(C) = z(C_1)z(C_2)$.
\end{enumerate}
\end{enumerate}

We extend this primitive relation $<_1$ to the smallest possible partial ordering $<$.
\end{definition}

For notation we shall write $(E,z) <_i (D,y)$ if $(E,z) < (D,y)$ and there is a chain of $i$ primitive relations connecting $(E,z)$ to $(D,y)$.

We next describe how a glued link diagram $D_\br$ (see Definition \ref{def:glued_d}) gives rise to an associated flow category  .Eventually, we shall wish to distinguish the cases $n=2$ and $n \geq 2$.  In the latter case we shall be interested mainly in the situation in which $D_\br$ is a matched diagram (see Definition \ref{defn:matched_diagram}), but for now we lose nothing by proceeding with no assumptions.

\begin{figure}
\centerline{
{
\psfrag{ldots}{\ldots}
\psfrag{V}{Vertical}
\psfrag{H}{Horizontal}
\includegraphics[height=1in,width=2.5in]{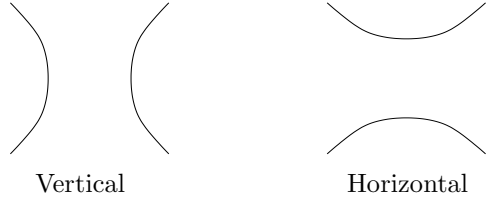}
}}
\caption{We show what we mean by the vertical and horizontal smoothings of an elementary tangle (assumed to be $|r|$ horizontal half-twists as in Figure \ref{2twist}).}
\label{smoothings}
\end{figure}

Suppose $D_\br$ consists of $m$ elementary tangles of indices $\br = (r_1,r_2,\ldots, r_m)$.  We shall construct a flow category $\cL^n(D_\br)$ as a \emph{disc cover} of $\cS^n_\br$.

\begin{definition}
	\label{defn:morphism_flow_cat}
	A \emph{morphism} of flow categories $\Psi : \Cat_1 \rightarrow \Cat_2$ is a grading-preserving functor $\Psi$ such that if $x,y \in \Ob(\Cat_1)$ then
	\[ \Psi : \Hom(x,y) \rightarrow \Hom(\Psi(x), \Psi(y)) \]
	\noindent is a local diffeomorphism.
\end{definition}

\begin{definition}
	\label{defn:disc_cover}
	If $\Psi : \Cat_1 \rightarrow \Cat_2$ is a morphism of flow categories and each morphism space of $\Cat_2$ is homeomorphic to a disc, then we call $\Cat_1$ an \emph{disc cover} of $\Cat_2$.
\end{definition}

The useful point about disc covers is the following proposition.

\begin{proposition}
	\label{prop:disc_cover}
	Suppose that $\widetilde{\Ob_1}$ is a finite $\Z$-graded set, and suppose that $\widetilde{\Cat_1}_i$ is a flow category whose graded object set consists of those elements of grading $i$, $i+1$, and $i+2$ of $\widetilde{\Ob_1}$.  Let $\widetilde{\Cat_1}$ be the category with object set $\widetilde{\Ob_1}$ with the smallest morphism sets such that each $\widetilde{\Cat_1}_i$ is a full subcategory.
	Further suppose that $\Cat_2$ is a flow category in which each morphism space is a disc.  Finally suppose that $\widetilde{\Psi} : \widetilde{\Cat_1} \rightarrow \Cat_2$ is a grading-preserving functor such that its restriction to each $\widetilde{\Cat_1}_i$ is a morphism of flow categories.
	
	Now for $a,b \in \widetilde{\Ob_1}$ with $|b| - |a| = 3$
	\[ \Hom_{\widetilde{\Cat_1}}(b,a) = \bigcup_{c \, \, : \, \, |a| < |c| < |b|} \Hom_{\widetilde{\Cat_1}}(c,a) \times \Hom_{\widetilde{\Cat_1}}(b,c) \]
	\noindent is topologically a disjoint union of circles.
	
	The functor $\widetilde{\Psi}$ induces maps
	\[\widetilde{\Psi}_{b,a} : \Hom_{\widetilde{\Cat_1}}(b,a) \rightarrow \Hom_{\Cat_2}(\widetilde{\Psi}(b),\widetilde{\Psi}(a)) \]
	\noindent which are covering maps.  If these maps are trivial covering maps on each component of each $\Hom_{\widetilde{\Cat_1}}(b,a)$ then there is a unique flow category $\Cat_1$ and disc cover $\Psi : \Cat_1 \rightarrow \Cat_2$ such that
	
	\begin{itemize}
	\item $\Cat_1$ has object set $\widetilde{\Ob_1}$,
	\item Restricting $\Cat_1$ to objects of grading $i$, $i+1$, and $i+2$ gives the flow category $\widetilde{\Cat_1}_i$,
	\item $\widetilde{\Cat_1}$ is a subcategory of $\Cat_1$,
	\item $\widetilde{\Psi}$ is the restriction of the functor $\Psi$.
	\end{itemize}
\end{proposition}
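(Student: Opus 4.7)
The plan is to build $\Cat_1$ by induction on the grading difference $k=|b|-|a|$. For $k\le 2$ the morphism spaces and the action of $\Psi$ on them are forced by the requirement that $\Cat_1$ restricted to any three consecutive gradings equal $\widetilde{\Cat_1}_i$, so there is nothing to construct. The task is therefore to specify $\M_{\Cat_1}(b,a)$ and $\Psi$ on it for $k\ge 3$, subject to the flow category axioms and to the requirement that $\Psi$ be a local diffeomorphism onto the $(k-1)$-disc $\M_{\Cat_2}(\Psi(b),\Psi(a))$. Uniqueness of the construction will follow at every stage because a local diffeomorphism of a compact manifold onto a disc is a covering of a simply connected space, hence a disjoint union of homeomorphisms.

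For the base case $k=3$ I would first verify that $\Hom_{\widetilde{\Cat_1}}(b,a)=\bigcup_c \M(c,a)\times \M(b,c)$ is genuinely a disjoint union of circles. Each piece is a compact $1$-manifold, and an iterated composition $(p_1,p_2,p_3)\in \M(c_1,a)\times \M(c_2,c_1)\times \M(b,c_2)$ with $|c_1|=|a|+1$ and $|c_2|=|a|+2$ appears once as a boundary point of the piece for $c=c_1$ (via the face structure of $\M(b,c_1)$ in $\widetilde{\Cat_1}_{|c_1|}$) and once as a boundary point of the piece for $c=c_2$ (via the face structure of $\M(c_2,a)$ in $\widetilde{\Cat_1}_{|a|}$), so gluing produces a closed $1$-manifold. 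The map induced by $\widetilde{\Psi}$ into the boundary circle of the target $2$-disc is a local diffeomorphism of compact $1$-manifolds and hence a covering onto $S^1$; the trivial covering hypothesis says each circle component maps by a homeomorphism. I would then cap each such circle with a closed $2$-disc and extend $\Psi$ by the unique (up to diffeomorphism) homeomorphism of $2$-discs agreeing with the boundary map, and declare the resulting disjoint union of $2$-discs to be $\M_{\Cat_1}(b,a)$.

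For $k\ge 4$ the induction step is cleaner: once all smaller moduli spaces are built, $\bigcup_c \M_{\Cat_1}(c,a)\times\M_{\Cat_1}(b,c)$ is a closed $(k-2)$-manifold mapping via a covering onto $\partial\M_{\Cat_2}(\Psi(b),\Psi(a))\cong S^{k-2}$. Since $S^{k-2}$ is simply connected for $k\ge 4$, this covering is automatically trivial and each component is itself a $(k-2)$-sphere mapping homeomorphically, so I can cap off each component with a $(k-1)$-ball and extend $\Psi$ accordingly. The hard part will be arranging the $\langle k-1\rangle$-face structure of the newly constructed moduli spaces coherently: each filled-in disc must have codimension-$i$ faces equal to the pieces $\M_{\Cat_1}(c,a)\times\M_{\Cat_1}(b,c)$ with $|c|=|a|+i$, and the composition maps of $\Cat_1$ must realise these as embeddings. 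The pullback of the $\langle k-1\rangle$-structure from $\M_{\Cat_2}(\Psi(b),\Psi(a))$ under the local diffeomorphism $\Psi$ supplies such a structure canonically, but checking that the pullback matches the inductively built structures on lower-dimensional moduli spaces along their shared corner strata will require careful bookkeeping at every grading level.
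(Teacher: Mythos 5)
Your proposal follows essentially the same route as the paper: induct on $|b|-|a|$, fill in the circle components of $\partial\M(b,a)$ with discs at relative index $3$ using the trivial-covering hypothesis, and fill in higher-dimensional sphere boundaries with balls automatically since $S^{k-2}$ is simply connected for $k\ge 4$. Your version spells out more carefully why the $k=3$ boundary is a closed $1$-manifold and flags the $\langle k-1\rangle$-face-structure compatibility issue, which the paper's proof leaves implicit; pulling back the corner structure from $\M_{\Cat_2}(\Psi(b),\Psi(a))$ via the local diffeomorphism, as you suggest, is indeed the right way to supply it.
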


\begin{proof}
	We determine $\Cat_1$ and $\Psi$ inductively.  Firstly note that by hypothesis $\Cat_1$ and $\Psi$ are determined on the level of objects and on the level of the $0$- and $1$-dimensional moduli spaces.
	
	Now taking $a,b \in \Ob(\Cat_1)$ with $|b| - |a| = 3$ we note that $\partial \M(b,a)$, if $\M(b,a)$ exists, is already determined and is a disjoint union of circles.  We know that $\M(b,a)$ must be obtained by filling each circle with a disc since $\Psi$ is a disc cover.  Furthermore, doing this we can find a $\Psi$ defined on each $2$-dimensional moduli space because of the trivial covering map hypothesis on $\widetilde{\Psi}_{b,a}$.
	
	Now taking $a,b \in \Ob(\Cat_1)$ with $|b| - |a| = 4$ we note that the map
	\[ \Psi_{b,a} |_{\partial \M (b,a)} : \partial \M (b,a) \rightarrow \partial \M(\Psi(b), \Psi(a)) {\rm ,}\]
	\noindent (if we can define $\M_{b,a}$ and $\Psi_{b,a}$ consistently) is determined and is a local covering map of a 2-sphere, and hence a trivial covering.  Hence we must obtain $\M (b,a)$ by filling in each 2-sphere boundary with a 3-ball.  The induction can then continue, increasing the relative index of $b,a$, until $\M (b,a)$ is determined for each $a,b \in \Ob(\Cat_1)$.
\end{proof}

Proposition \ref{prop:disc_cover} provides a quick way to construct the flow category $\cL^n(D_{\br})$.  We shall describe the objects of $\cL^n(D_{\br})$, along with the moduli spaces of dimensions less than $2$.  At the same time, we shall construct a disc cover $\Phi$ from $\cL^n(D_{\br})$ to $\cS_{\br}$ by giving it on the moduli spaces of dimensions less than $2$, verifying that the $1$-dimensional trivial cover condition is satisfied, and then appealing to Proposition \ref{prop:disc_cover} to give the rest.

\begin{definition}
\label{def:objects_category}
We define the objects of the category $\cL^n(D_\br)$ to be all possible labellings of a set of weighted glued configurations that occur as \emph{resolutions} of $D_\br$.

To construct such a resolution, for each of the $m$ tangle summands either choose the horizontal or the vertical smoothing.  If the vertical smoothing is chosen at crossing $i$ then add an arc connecting the two strands of that smoothing and decorate the arc by $(r_i,s_i)$ for some admissible choice of $s_i$.  This gives a resolution of $D_\br$, a weighted glued configuration $C$.

The objects of the category $\cL^n(D_\br)$ are exactly all possible labellings on all possible resolutions $C$ constructed in this way.
\end{definition}

The local picture for the construction of a resolution is illustrated in Figure \ref{bipeg}.

\begin{figure}
\centerline{
{
\psfrag{ldots}{\ldots}
\psfrag{V}{Vertical}
\psfrag{H}{Horizontal}
\psfrag{(4,s)}{$(4,s)$}
\psfrag{L}{$L$}
\psfrag{R}{$R$}
\includegraphics[height=1.5in,width=3.5in]{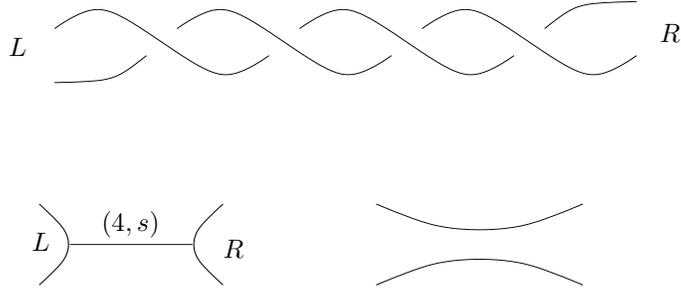}
}}
\caption{We show how an elementary tangle of index $4$ (pictured in the top half of the figure) turns into five possible local pictures of a weighted glued configuration (pictured in the bottom half of the figure).  Here $s$ is either 1, 2, 3, or 4.}
\label{bipeg}
\end{figure}

Now the objects of the flow category $\cS^n_\br$ are naturally described as $m$-tuples $\bs = (s_1, s_2, \ldots, s_m)$ where $| s_i | \leq |r_i|$ and $s_i$ is either $0$ or agrees in sign with $r_i$.  Recall that we wish to define the flow category $\cL^n(D_\br)$ via a locally covering morphism of flow categories $\Phi : \cL^n(D_\br) \rightarrow \cS^n_\br$.  First we describe what this morphism does on the objects of $\cL^n(D_\br)$.

\begin{definition}
Let $C$ and $s_i$ be as in Definition \ref{def:objects_category}.  Then the map
\[ C \mapsto (s_1, s_2, \ldots, s_m) \]
\noindent where we take $s_i = 0$ if the horizontal smoothing has been chosen at tangle $i$, when composed with the map that forgets labelings, gives a map from $\Ob(\cL(D_\br))$ to the objects of $\Ob(\cS_\br)$.
\end{definition}

Now we wish to describe the morphisms of $\cL^n(D_\br)$.  There is a (non-identity) morphism from object $(E,y)$ to object $(D,x)$ iff we have $(E,y) < (D,x)$.  The cohomological grading of the objects of $\cL^n(D_\br)$ is inherited through the map $\Phi$ from the cohomological grading of the objects of $\cS^n_\br$.  We begin by describing the $0$-dimensional morphisms.

\begin{definition}
\label{defn:numberofpoints}
Suppose that $(E,y) <_1 (D,x)$, then we have that the moduli space $\M ((D,x),(E,y))$ has the structure of a compact $0$-manifold.  The number of points of $\M ((D,x),(E,y))$ is:

\begin{enumerate}
\item 2 if we are in case 1a of Definition \ref{def:partorder},
\item n if we are in case 1b,
\item 1 if we are in any of the remaining cases.
\end{enumerate}
\end{definition}

Now we give the morphism of flow categories $\Phi$ on the $0$-dimensional moduli spaces already defined, and for this we need some notation.  Suppose then that $p = (i_1, \ldots, i_\alpha, \ldots, i_m)$ and $q = (i_1, \ldots, i_\alpha + 1, \ldots, i_m)$ are two objects of the flow category $\cS_{\br}$.  In the case that $0 \in \{ i_{\alpha}, i_{\alpha} + 1 \}$, we write the point in the moduli space $\M_{\cS_\br} (q,p)$ as $P$.
In the case that $i_{\alpha}$ is both positive and odd or both negative and even, the moduli space $\M_{\cS_\br} (q,p)$ consists of two points $P$ and $M$.  In the remaining case $\M_{\cS_\br} (q,p)$ consists of $n$ points $P_1, \ldots , P_n$.

To give completely the morphism $\Phi$ on the $0$-dimensional moduli spaces, we need to say which points of $\M ((D,x),(E,y))$ get sent to $P$, to $M$, or to $P_i$ for $1 \leq i \leq n$.

\begin{definition}
	\label{defn:Phi_0dim}
	We refer to the list of Definitions \ref{defn:numberofpoints} and \ref{def:partorder}, and describe where the points of $\M ((D,x),(E,y))$ get sent under $\Phi$.
	
\begin{itemize}
	\item (1a) The points map surjectively onto $\{P,M\}$.
	\item (1b) The points map surjectively onto $\{P_1, P_2, \ldots, P_n\}$.
	\item (1c) In the first case the point is sent to $P$ and in the second it is sent to $M$.
	\item (1d) If $y(C_L) = x^k z(C_L)$ then the point is sent to $P_{k+1}$.
	\item (2a), (2b) The point is sent to $P$.
\end{itemize}
\end{definition}

Next we wish to describe both the 1-dimensional morphism spaces of $\cL(D_\br)$, and how the morphism of flow categories $\Phi$ behaves on these morphism spaces of $\cL(D_\br)$.

\begin{definition}
Suppose now that $(D,y) >_2 (E,z)$.

There are essentially two cases.  One case is when $(D,y)$ and $(E,z)$ are related by a double surgery in a `ladybug' formation: in this case there is a choice to be made about the moduli spaces $\M ((D,y),(E,z))$.  The second case is the non-ladybug case, and here there is a unique choice of moduli spaces $\M ((D,y),(E,z))$ and morphism $\Phi |_{\M ((D,y),(E,z))}$ consistent with the existence of a disc-cover $\Phi$ with the prescribed behavior already given on the $0$-dimensional moduli spaces.  We leave the verification of this latter case to the reader.

Now suppose that we are in the ladybug case.  This means that we have $(D,y) >_1 (F,w) >_1 (E,z)$ where one performs a surgery to get from $E$ to $F$ and then another surgery to arrive at $D$, and the associated handlebody to the double surgery is a disjoint union of a torus with two boundary components and some cylinders embedded in $[0,1] \times \R^2$ with boundary $D \subset \{ 0 \} \times \R^2$ and $E \subset \{ 1 \} \times \R^2$.

\begin{figure}
\centerline{
{
\psfrag{ldots}{\ldots}
\psfrag{V}{Vertical}
\psfrag{H}{Horizontal}
\psfrag{(4,s)}{$(4,s)$}
\psfrag{L}{$L$}
\psfrag{R}{$R$}
\includegraphics[height=1.5in,width=1.2in]{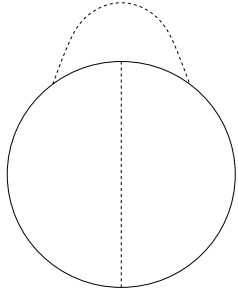}
}}
\caption{A ladybug configuration in a standard position in the plane, showing the surgery arcs as dashed lines.  The internal surgery arc is vertical, and the external surgery arc lies above the centre of the circle.}
\label{fig:ladybug}
\end{figure}

By an isotopy of the plane we can ensure that the circle $c_E$ of $E$ undergoing two surgeries lies in the plane as shown in Figure \ref{fig:ladybug}.  We write the surgered circle as $c_D$.

We have that $z(c_E) = 1$ and $y(c_D) = x^{n-1}$.  If $(F,w)$ is such that $(D,y) >_1 (F,w) >_1 (E,z)$ then $F$ is the result of surgery along exactly one of the arcs of Figure \ref{fig:ladybug}, and the case of either surgery there are $n$ possible labellings $w$ of $F$ (one of the new circles of $F$ gets labelled with $x^k$ and the other with $x^{n-1-k}$ for each $k = 0,1,\ldots,n-1$).

Now we wish to define the moduli space $\M((D,y),(E,z))$ such that there is a an extension of the morphism $\Phi$ already defined on the $0$-dimensional moduli spaces of $\cL^n(D_\br)$ to $\M((D,y),(E,z))$.  By counting the number of points of the boundary of $\M((D,y),(E,z))$, we see that $\M((D,y),(E,z))$ has to be the disjoint union of $n$ closed intervals.
It remains to determine which pairs of points in the boundary are joined by an interval.  This amounts to giving a bijection between the $n$ labellings $w_i$ following internal surgery (ie surgery along the internal arc) and the $n$ labellings $w_e$ following external surgery.
Lipshitz and Sarkar have given a way to identify the two circles $c_{i,1}$ and $c_{i,2}$ obtained from $c_E$ by internal surgery with those circles $c_{e,1}$ and $c_{e,2}$ obtained by external surgery, and this is called the \emph{ladybug matching} $L: \{ c_{i,1}, c_{i,2} \} \rightarrow \{ c_{e,1}, c_{e,2} \}$.  We give a bijection of labellings via the ladybug matching by 
requiring $w_e(L(c_{i,1})) = w_i(c_{i,1})$ and $w_e(L(c_{i,2})) = w_i(c_{i,2})$.

This choice of bijection for each ladybug configuration then determines both the $1$-dimensional moduli spaces of $\cL^n(D_\br)$ and how they cover the 1-dimensional moduli spaces of $\cS^n_\br$.

\end{definition}

\begin{figure}
\centerline{
{
\psfrag{ldots}{\ldots}
\psfrag{V}{Vertical}
\psfrag{H}{Horizontal}
\psfrag{1}{$1$}
\psfrag{2}{$2$}
\psfrag{3}{$3$}
\includegraphics[height=1.4in,width=2.8in]{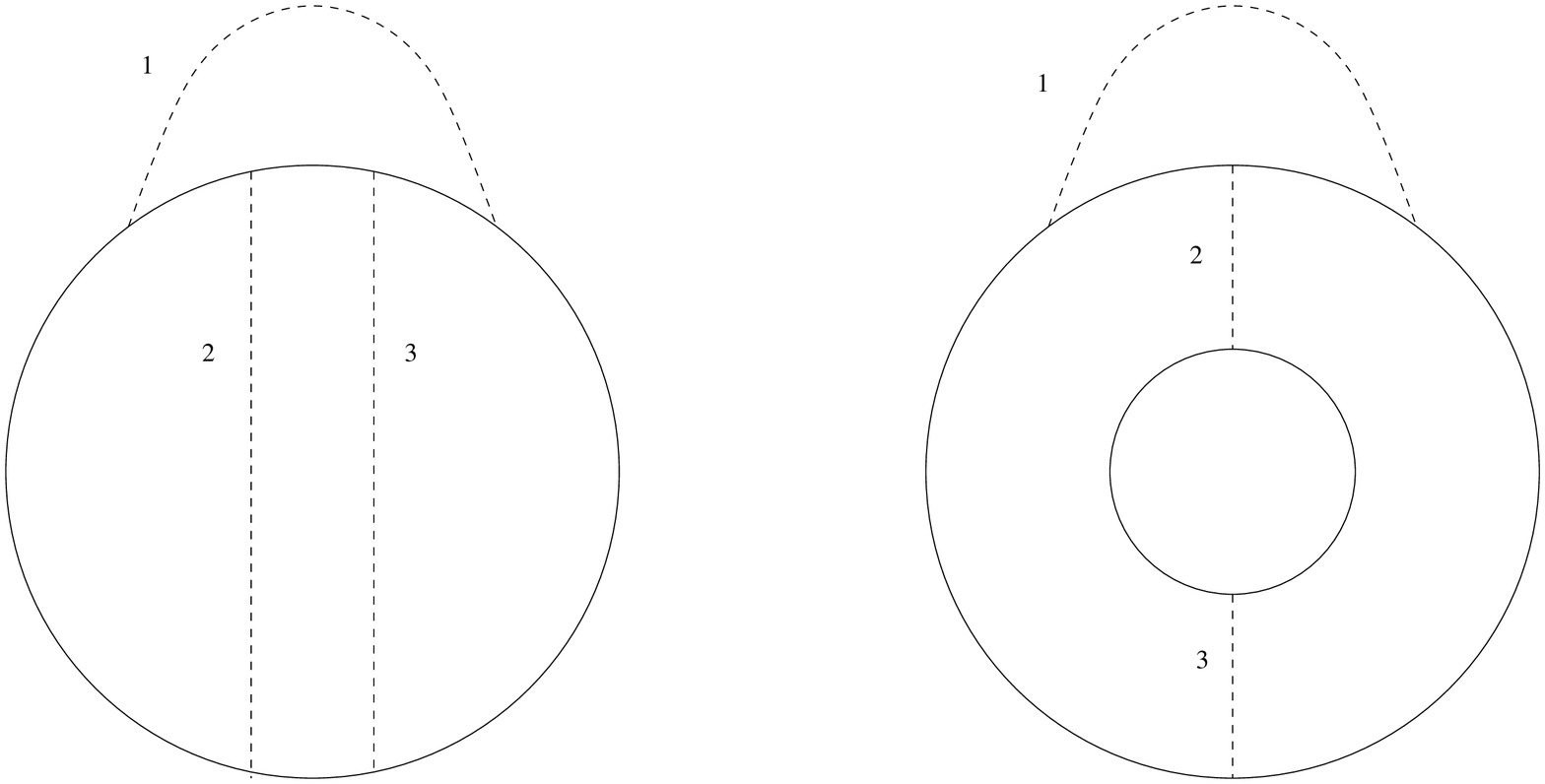}
}}
\caption{The `double ladybug' configurations of three dotted surgery arcs giving rise to 2-dimensional moduli space components.}
\label{fig:double_ladybird}
\end{figure}

It still remains to check that this choice of 1-dimensional covering is consistent with the circle covering condition of Proposition \ref{prop:disc_cover}.  The critical case in which the ladybug choice plays a role turns out to be that of the `double ladybug' as illustrated in Figure \ref{fig:double_ladybird}.  Indeed, in this case the boundary of the corresponding 2-dimensional moduli space can be verified to be the disjoint union of $n$ hexagons.

We check that each double ladybug of Figure \ref{fig:double_ladybird} is consistent with the circle covering condition of Proposition \ref{prop:disc_cover}.

The disk in the sock flow category that we must consider has the cornered structure of a solid hexagon.  The six vertices correspond to orderings of the surgery arcs (shown as numbers $1$, $2$, $3$ in Figure \ref{fig:double_ladybird}), and the six edges correspond to pairs of orderings that differ by a transposition of the first two or last two arcs.

After having performed one or more of the surgeries, we shall also order the resulting circles (for notation), this time in increasing order of the $x$-coordinates of their leftmost points.

To specify a lift to $\cL^n$ of a corner of the disk in the sock flow category we need to give a sequence of four objects in $\cL^n$, the $i$th object being a decoration of the circles resulting from the first $i-1$ surgeries.  For grading reasons, the circles before any surgery must be decorated with $1$, and after all three surgeries they must be decorated with $x^{n-1}$.  This means that it is enough to give the decoration after the first surgery, and the decoration after the first two surgeries.  We shall therefore write a corner as
\[ [(A,B,C), v_1, v_2] \]
\noindent where $\{A,B,C\} = \{1,2,3\}$ so that the first vector gives the ordering of the surgeries, and $v_i$ gives the decorations after $i$ surgeries for $i=1,2$.

We now verify that first double ladybug of Figure \ref{fig:double_ladybird} satisfies the circle covering condition.  Indeed, for each $k = 0,1,\ldots,n-1$ there is a trivial cover of the boundary hexagon in the sock flow category given by the following cyclic ordering of vertices:
\begin{gather*}
[(123),(x^k,x^{n-k-1}),(x^{n-1})], [(213),(x^k,x^{n-k-1}),(x^{n-1})], \\
{[(231),(x^k,x^{n-k-1}),(x^k,x^{n-1},x^{n-k-1})]}, [(321),(x^k,x^{n-k-1}),(x^k,x^{n-1},x^{n-k-1})], \\
{[(312),(x^k,x^{n-k-1}),(x^{n-1})]}, [(132),(x^k,x^{n-k-1}),(x^{n-1})] {\rm .}
\end{gather*}

Similarly, the second double ladybug gives
\begin{gather*}
[(123), (x^k,x^{n-k-1},1), (x^k, x^{n-k-1})], [(213),(1),(x^k, x^{n-k-1})], \\
{[(231),(1),(x^k, x^{n-k-1})]}, [(321),(1), (x^k,x^{n-k-1})], \\
{[(312), (1), (x^k, x^{n-k-1})]}, [(132),(x^k,x^{n-k-1}),(x^k,x^{n-k-1})] {\rm .}
\end{gather*}

\emph{A priori}, one needs to check that the circle covering condition is satisfied for all 3-dimensional `resolution configurations'.  This can be done, but we suppress the other cases (which are much simpler) in the interests of length.

\begin{remark}
\label{rem:lens}
We note that $\cL^n(D_\br)$ can also be constructed as a cover of a framed flow category which itself has no dependence on $n$ (essentially one could take the underlying flow category to be the `$n=1$' Lens space).  This approach makes the obstruction theory easier and also serves as a further indication that the stable homotopy type that we produce for $n>2$ is the `correct' object since its construction is seen to be even closer to the case $n=2$ which returns the Lipshitz-Sarkar homotopy type.  The downside is that there are then many more non-trivial ladybug configurations that need to be checked before one can be certain that one has defined a framed flow category.
\end{remark}

\section{The stable homotopy type of a diagram}
\label{sec:sln_match_diag}

\subsection{Obstruction theory and framings}
\label{subsec:obstruction_n_theory}
\label{subsec:obstruction_theory}

Recall that the glued flow category $\cL^n(D_\br)$ arises as a disc cover of the sock flow category $\cS^n_\br$, and so a framing of $\cS^n_\br$ induces a framing of $\cL^n(D_\br)$.

In this subsection we consider the problem of framing the $\cS^n_\br$, and whether, given a glued diagram $D_\br$, different framings may give rise to glued flow categories producing inequivalent stable homotopy types.  In fact, not every moduli space of $\cS^n_\br$ is in the image of the covering map, and so we shall be able to make do with a \emph{partial} framing of $\cS^n_\br$ in order to induce a complete framing on $\cL^n(D_\br)$.

Rather than work with $\cS^n_\br$, we work at first with the more general notion of a flow category all of whose morphism spaces are homeomorphic to disjoint unions of discs.  We call such a flow category a \emph{disc} flow category.  Our work in this subsection generalizes work by Lipshitz-Sarkar in Subsection 4.2 of \cite{LipSarKhov} carried out there for the cube flow category.  The reader with that to hand is at an advantage.

Let $\cD$ be a disc flow category.  From $\cD$ we shall construct a CW-complex (not the complex $|\cD|$, to define which we would require a framed embedding of $\cD$) written $\cw(\cD)$.  This complex $\cw(\cD)$ has a $0$-cell for each object of $\cD$, and an $(r+1)$-cell $C_M$ for each component $M$ of the $r$-dimensional moduli spaces of $\cD$.

We attach the cells of $\cw(\cD)$ following the procedure that we now outline.

Assume that all cells of index less than or equal to $i$ have been attached, giving us the $i$-skeleton $\cw(\cD)^i$.  Let $a,b \in \Ob(\cD)$ with $|a| = r$, $|b| = r + i + 1$, and let $M \subseteq \M (b,a)$ be a component of the moduli space from $b$ to $a$ (which we assume is non-empty).  Now, $M$ is a manifold with corners that is homeomorphic to the closed $i$-disc $D^i$.  We write $C(M) = M \times [r, r+i+1]$ for the $(i+1)$-cell of $\cw(\cD)$ corresponding to $M$.  We need to give the attaching map
\[ \xi : \partial C(M) \rightarrow \cw(\cD)  {\rm .} \]

We first require that $\xi(M \times \{ r \}) = \{ a \}$ and $\xi(M \times \{ r + i + 1 \}) = \{ b \}$.  Next suppose that $c \in \Ob(\cD)$ with $|c| = s$ such that $r < s < r + i + 1$.  Then $\M(c,a) \times \M(b,c) \subset \partial \M(b,a)$ so consider a component $M_{ca}$ of $\M(c,a)$ and a component $M_{bc}$ of $\M(b,c)$ such that $M_{ca} \times M_{bc} \subset \partial M$.  We now define
\[ \xi|_{M_{ca} \times M_{bc} \times [r, s]} : {M_{ca} \times M_{bc} \times [r, s]} \rightarrow M_{ca} \times [r,s] = C(M_{ca})  \]

\noindent and
\[ \xi|_{M_{ca} \times M_{bc} \times [s, r+i+1]} : {M_{ca} \times M_{bc} \times [s,r+i+1]} \rightarrow M_{bc} \times [s,r+i+1] = C(M_{bc})  \]

\noindent by projection.

It is straightforward to check that $\xi$ is well-defined.

\begin{definition}
	\label{defn:partial_flow}
	We call $\widetilde{\cF}$ a \emph{partial flow category} of the flow category $\cF$ if $\widetilde{\cF}$ is a wide subcategory of $\cF$ such that the following condition holds.  For $a,b \in \Ob(\cF)$, if $M$ is a component of $\M_{\cF}(b,a)$, then either $M \subset \M_{\widetilde{\cF}}(b,a)$ or no interior point of $M$ is in $\M_{\widetilde{\cF}}(b,a)$.
\end{definition}

If we are in the case of Definition \ref{defn:partial_flow} (and using the same notations), then if $\cF$ is a disc flow category then $\widetilde{\cF}$ determines a subcomplex $\cw(\widetilde{\cF})$ of $\cw(\cF)$ by restricting to those cells corresponding to components of the moduli spaces of $\cw(\cF)$ of full dimension.

Suppose that $\cF$ comes with an embedding, and a framing of each $0$-dimensional moduli space in subcategory $\widetilde{\cF}$, which extends coherently to the full-dimensional moduli space components of dimension $1$.  By comparison with the standard framing of the Euclidean spaces into which the points are embedded, this determines a $1$-cochain $s \in C^1(\cw(\wt{\cF}), \F_2)$, which we call a \emph{sign assignment} for $\wt{\cF}$.

We wish to ask whether we can extend the framing of the $0$-dimensional moduli spaces of $\wt{\cF}$ to a framing of the full-dimensional moduli space components of $\wt{\cF}$ \emph{coherently} in the sense of Lipshitz-Sarkar.  In other words, if $M$ is a component of $\M_{\cF}(b,a)$ and $M \subset \M_{\widetilde{\cF}}(b,a)$ then the framing of $\M_{\wt{\cF}}(b,a)$ should agree with the product framing of $\M_{\wt{\cF}}(c,a) \times \M_{\wt{\cF}}(b,c)$ on the boundary of $\M_{\wt{\cF}}(b,a)$ where defined for each $c \in \Ob(\wt{\cF})$.  The answer lies in obstruction theory.

In order to define an obstruction class, we need to be careful with orientations.  So for all full-dimensional moduli space components $N \subset \M_{\wt{\cF}}(b,a)$, fix a choice of orientation, choosing the positive orientations for each $0$-dimensional moduli space.  Then the interior of the cell $C(N)$ is identified with $\mathring{N} \times \R$ and given the product orientation.  If $b >_1 c > a$ and $M \subset \M_{\wt{\cF}}(c,a)$ is a full-dimensional component then it may be embedded, perhaps multiple times, in $\partial N$ as a product of itself with a finite number of points $p$.  For those triples $(M,N,p)$ where it makes sense, we write $t(M,N,p) \in \F_2$ where $t(M,N,p) = 0$ or $1$ depending as the chosen orientation of $M$ agrees with or differs from the orientation induced on $M = M \times \{ p \}$ as the boundary of $N$.  We make a similar definition for when $b > c >_1 a$ and $M \subset \M_{\wt{\cF}}(b,c)$.

For a full-dimensional component $M \subset\M_{\wt{\cF}}(b,a)$, we note that the differential $\delta$ in the cochain complex $C^*(\cw(\wt{\cF}), \Z)$ is given by
 \begin{equation}
 \label{eq:diffsy}
 \delta (C(M)') = \sum_{\substack{c >_1 b > a \\ N \subset \M_{\wt{\cF}}(c, a) \\ p \in \M_{\wt{\cF}}(c,b) \\ M \times \{ p \} \subset \partial N } } (-1)^{t(M,N,p)} C(N)'  + \sum_{\substack{b > a >_1 c \\ p \in \M_{\wt{\cF}}(a, c) \\ N \subset \M_{\wt{\cF}}(b,c) \\  \{ p \} \times M \subset \partial N } } (-1)^{t(M,N,p)} C(N)'
 \end{equation}

\noindent where we write $C(M)'$ for the element of the standard dual basis whose dual is $C(M)$.

\begin{definition}
	\label{defn:obstruction_class}
	Assume that all full-dimensional moduli space components of $\wt{\cF}$ of dimension less than $k$ have been coherently framed.  We shall produce a cochain $\oc \in C^{k+1}(\cw(\wt{\cF}), \pi_{k-1}(O))$, where we write $O$ for the direct limit of the orthogonal groups.
	
	Let $M$ be a full-dimensional component of the moduli space $\M_{\wt{\cF}}(b,a)$, of dimension $k$.  Then $\partial M$ is a sphere $S^{k-1}$ oriented as the boundary of $M$, which already has a framing coming from the product framing of lower-dimensional moduli spaces.
		
	Then, as in Definition 4.9 of \cite{LipSarKhov}, we compare with a null-concordant framing of $S^{k-1}$ to produce an element of $\pi_{k-1}(O)$.  Hence we get the cochain $\oc$ as required.
\end{definition}

Note that if $\oc = 0$ then the coherent framing of $\wt{\cF}$ can be extended to the full-dimensional $k$-dimensional moduli space components of $\wt{\cF}$.

\begin{proposition}
	The cochain $\oc$ is a cocycle.
\end{proposition}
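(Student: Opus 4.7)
The plan is to show that $(\delta\oc)(C(N)') = 0 \in \pi_{k-1}(O)$ for every $(k+1)$-dimensional component $N$ of a moduli space $\M_{\wt{\cF}}(b,a)$ in $\wt{\cF}$, since such components correspond to the $(k+2)$-cells against which $\delta\oc$ is evaluated.  Using the coboundary formula (\ref{eq:diffsy}), I would first rewrite $(\delta\oc)(C(N)')$ as a signed sum of the values $\oc(M)$, indexed over pairs $(M,p)$ with $M$ a $k$-dimensional full-dimensional component of $\wt{\cF}$ and $p$ a point of an adjacent $0$-dimensional moduli space, such that $M \times \{p\}$ or $\{p\} \times M$ is a codimension-one face of $N$.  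These faces provide precisely the $k$-cells of a natural CW decomposition of $\partial N \cong S^k$, whose $(k-1)$-skeleton consists of the codimension-$\geq 2$ strata of $N$.

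The next step is to reinterpret this sum as a single global obstruction on $\partial N$.  By the inductive hypothesis that the coherent framing has been constructed through dimension $k-1$, the product framings on lower moduli spaces assemble into a single framing $\varphi_{\leq k-1}$ of the normal bundle of the $(k-1)$-skeleton of $\partial N$ in the ambient Euclidean space.  By Definition \ref{defn:obstruction_class}, the class $\oc(M)$ measures precisely the failure of $\varphi_{\leq k-1}|_{\partial M}$ to extend across the $k$-cell $M$.  Standard cellular obstruction theory then identifies the sum $\sum \pm \oc(M)$ with the obstruction class in $H^k(\partial N; \pi_{k-1}(O)) \cong \pi_{k-1}(O)$ to extending $\varphi_{\leq k-1}$ over all of $\partial N$, with signs matching those appearing in (\ref{eq:diffsy}).

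To show this global obstruction vanishes I would use that $N$ is homeomorphic to a $(k+1)$-disc, so its normal bundle in the ambient Euclidean space is trivial.  Any trivialization of this normal bundle restricts to a framing of the normal bundle of $\partial N$ defined over the whole sphere, and hence witnesses that the global obstruction to extending a framing over $\partial N$ vanishes.  The main obstacle is verifying that the restriction of a chosen trivialization of $N$'s normal bundle to the $(k-1)$-skeleton can be arranged to agree with $\varphi_{\leq k-1}$: a priori they differ by a map of the $(k-1)$-skeleton to $O$, and one must argue that by modifying the global trivialization by a lower-dimensional homotopy (which is available by the inductive coherence of the framing through dimension $k-1$) the two framings can be matched without affecting the obstruction in degree $k$.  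This is the analogue of the argument of Lipshitz--Sarkar for the cube flow category in Subsection 4.2 of \cite{LipSarKhov}, and the same pattern of reasoning carries over to our disc flow category setting.
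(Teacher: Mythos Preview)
Your approach is essentially the same as the paper's, and the key idea---using that the $(k+1)$-disc $N$ carries a global framing to witness vanishing of the sum---is right.  However, your ``obstacle'' is self-inflicted by the way you have set up the obstruction-theory framework, and your resolution of it is too vague to count as complete.

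The paper avoids this obstacle entirely by a slightly different organization.  Rather than trying to extend $\varphi_{\leq k-1}$ across the $k$-cells of $\partial N$, the paper lets $B \subset \partial N$ be the complement of the open $k$-discs $D(M,p)$, $D(p,M)$ (so $B$ is exactly the locus where the coherent framing is already defined, being built from products of moduli spaces of dimension $<k$).  One then takes \emph{any} framing $f$ of $N$ (which exists since $N$ is a disc), restricts it to $B$, and compares with the coherent framing there to obtain a map $\Phi\colon B \to O$.  Restricting $\Phi$ to each boundary sphere $\partial D(M,p) \cong S^{k-1}$ yields precisely $\pm\langle\oc,C(M)\rangle$ by the definition of $\oc$ (since $f|_{D(M,p)}$ is a null-concordant framing of $M\times\{p\}$), with the sign $(-1)^{t(M,N,p)}$ coming from the boundary orientation.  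The purely topological fact that for \emph{any} $\Phi\colon B\to O$ the sum of these boundary restrictions vanishes in $\pi_{k-1}(O)$---because $B$ is $S^k$ with open discs removed---then gives $\langle\delta\oc, C(N)\rangle = 0$ immediately.

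The advantage of the paper's route is that it never needs to match a global trivialization with $\varphi_{\leq k-1}$ on any skeleton: it only compares two framings that are both already defined on $B$.  Your approach can be made to work, but the cleanest fix is simply to reorganize along these lines and drop the obstruction-to-extension language in favor of the comparison map $\Phi$.
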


\begin{proof}
	Let $M \subset \M_{\wt{\cF}}(b,a)$ be a $(k+1)$-dimensional full-dimensional moduli space component.  Then $C(M)$ is a $(k+2)$-cell of the complex $\cw(\wt{\cF})$ and we wish to show that
	\[ \langle d \oc, C(M)\rangle = 0 {\rm .} \]
	
	We write $S^k$ for $\partial M$.  Suppose that $b >_1 c > a$ and $N \times \{ p \} \subset \M_{\wt{\cF}}(c,a) \times \M_{\wt{\cF}}(b,c)$ is a disc in $\partial M$, where $N$ is a component of  $\M_{\wt{\cF}}(c,a)$.  We write $D(N,p) = N \times \{ p \} \subset S^k$ for the closed disc.  Similarly, if $b > c >_1 a$ we get closed discs $D(p,N) \subset  \M_{\wt{\cF}}(c,a) \times \M_{\wt{\cF}}(b,c)$.
	
	Let
	\[ B = S^k \setminus \left( \bigcup_{N,p} \mathring{D}(N,p) \cup \bigcup_{p,N}  \mathring{D}(p,N) \right) {\rm ,}\]
	
	\noindent where the sum is taken over all $p$, $N$ for which it makes sense.
	
	Now a map $\Phi : B \rightarrow O$ induces by restriction maps $\partial D(p,N) \rightarrow O$ and $\partial D(N,p) \rightarrow O$ and hence gives rise to $\phi_{p,N} \in \pi_{k-1}(O)$ and $\phi_{N,p} \in \pi_{k-1}(O)$.  We claim that such maps must satisfy
	\begin{equation}
	\label{eq:phisy}
	\sum_{p,N} \phi_{p,N} + \sum_{N,p} \phi_{N,p} = 0 {\rm .}
	\end{equation}
	
	\noindent This follows from the fact that $B$ is $S^k$ with a number of $k$-discs removed.
	
	To construct the map $\Phi$, note that $B$ is comprised of products of moduli spaces, each of which has already been coherently framed by hypothesis.  So take any framing $f$ of $M$, and denote by $f_0$ the pullback to $B$.  Then there is map $g_0: B \rightarrow O(d)$ for some $d$ (depending on the dimension of the Euclidean space taken for the embedding of $\cF$) such that $g_0(f_0)$ is the coherent framing we have by hypothesis.  The map $\Phi$ is the composition of $g_0$ with the inclusion of $O(d)$ in $O$.
	
	Now we ask how the group elements $\phi_{p,N}$ and $\phi_{N,p}$ are related to the group elements $\langle\oc, N\rangle$.
	
	In the case of $\phi_{p,N}$, simply note that $f$ restricted to $\{ p \} \times N$ gives a null-concordant framing of $\{ p \} \times N$, so that $\phi_{p,N}$ is exactly $\pm\langle\oc , N\rangle$, depending as our chosen orientation on $N$ agrees or differs with the orientation on $N = \{ p \} \times N$ induced as part of the boundary of $M$.  In other words,
	\[ \langle\oc , N\rangle = (-1)^{t(M,N,p)}\phi_{p,N} {\rm ,}\]
	
	\noindent and similarly for $\phi_{N,p}$
	\[ \langle\oc, N\rangle = (-1)^{t(M,N,p)} \phi_{N,p} {\rm .} \]
	
	Combining this with \eqref{eq:diffsy} and \eqref{eq:phisy}, we see that
	\begin{align*}
	\langle\delta \oc,C(M)\rangle &= \sum_{\substack{b >_1 c > a \\ N\subset \M(c,a) \\ p \in \M(b,c) \\ N \times \{ p \} \subset \partial M }} (-1)^{t(N,M,p)} \oc(C(N)) + \sum_{\substack{b>c >_1 a \\ p \in \M(c,a) \\ N \subset \M(b,c) \\ \{ p \} \times N}} (-1)^{t(N,M,p)} \oc(C(N)) \\ &= \sum_{p,N} \phi_{p,N} + \sum_{N,p} \phi_{N,p} = 0 {\rm .} \qedhere
	\end{align*}
\end{proof}

Now suppose that $\cF$ has been embedded and that all the full-dimensional moduli space components of $\wt{\cF}$ of dimension less than $k$ have been coherently framed.  Let $\tau \in C^{k}(\cw(\wt{\cF}), \pi_{k-1}(O))$ be a cochain.  We use $\tau$ to modify the framings of the moduli spaces components of dimension $k-1$ as follows.  If $M$ is such a component then take a small ball $B \subset M$.  Now $\tau(C(M))$ is an element of $\pi_{k-1}(O)$ and so we represent $\tau$ by a map $B^{k-1} \rightarrow O(n)$ which takes the boundary $S^{k-2}$ to the identity (for some large $n$).  Hence, after stabilization if necessary, we can use this map to alter the framing of $M$ in the interior of $B$.

This $\tau$-modification clearly alters the cocycle $\oc$ by addition of $\delta \tau$.  The next proposition is immediate.

\begin{proposition}
	\label{prop:obstruction_class}
If the cocycle $\oc$ of Definition \ref{defn:obstruction_class} satisfies
\[ [\oc] \in H^{k+1}(\cw(\wt{\cF}), \pi_{k-1}(O)) = 0 \]

\noindent then the coherent framing of the moduli space components of $\wt{\cF}$ can be extended to the full-dimensional $k$-dimensional moduli space components of $\wt{\cF}$ after perhaps modifying the framing on the $(k-1)$-dimensional components. \qed
\end{proposition}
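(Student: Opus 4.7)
The plan is to deduce the result essentially as a formal consequence of the preceding set-up, following the classical pattern of obstruction theory (cf.\ \cite{LipSarKhov}, Lemma 4.10 and the surrounding discussion).

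First I would invoke the hypothesis $[\oc] = 0 \in H^{k+1}(\cw(\wt{\cF}), \pi_{k-1}(O))$ to choose a cochain $\tau \in C^k(\cw(\wt{\cF}), \pi_{k-1}(O))$ with $\delta \tau = -\oc$. Then, following the recipe described in the paragraph immediately preceding the proposition statement, for each $(k-1)$-dimensional full-dimensional component $M \subset \M_{\wt{\cF}}(b,a)$ I would pick a small open ball $B \subset \mathring{M}$, represent $\tau(C(M)) \in \pi_{k-1}(O)$ by a map $B^{k-1} \to O(N)$ sending $\partial B^{k-1}$ to the identity (after stabilizing the ambient dimension $N$ if needed), and use this map to locally alter the existing framing of $M$ on the interior of $B$. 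Since the modification is supported in the interior of each $(k-1)$-dimensional component, it does not affect the already-coherent product framings that $M$ inherits along its boundary from the $(k-2)$-dimensional and lower strata, so the framing remains coherent on the entire $(k-1)$-skeleton of full-dimensional components.

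Next I would compute the new obstruction cocycle $\oc'$ associated to this modified framing. By the observation recorded just above the statement of the proposition, the effect of a $\tau$-modification on $\oc$ is precisely the addition of $\delta \tau$; hence
\begin{equation*}
\oc' \;=\; \oc + \delta \tau \;=\; \oc - \oc \;=\; 0.
\end{equation*}
Since $\oc'$ vanishes, the remark immediately after Definition \ref{defn:obstruction_class} applies: the coherent framing (as now modified on the $(k-1)$-dimensional components) extends over every full-dimensional $k$-dimensional moduli space component of $\wt{\cF}$, which is exactly the conclusion sought.

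The main bookkeeping point, and the only place one has to be at all careful, is to check that the local $\tau$-modification preserves coherence along lower-dimensional strata and is compatible with the conventions that fix orientations and signs $t(M,N,p)$ in the definition of $\oc$; this is what guarantees the clean identity $\oc' = \oc + \delta \tau$. These verifications are essentially those carried out in Subsection 4.2 of \cite{LipSarKhov} for the cube flow category, and they transfer verbatim to our more general disc flow category $\wt{\cF}$ since the supports of the modifications are disjoint interior balls in the top-dimensional strata; no genuinely new difficulty arises. Once this is checked the proposition follows immediately.
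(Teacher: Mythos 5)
Your proposal is correct and follows exactly the argument the paper intends: the paper sets up the $\tau$-modification and the identity $\oc' = \oc + \delta\tau$ in the paragraph just before the proposition and then declares the proposition "immediate" (with \qed on the statement), and you have simply spelled out that immediate step by choosing $\tau$ with $\delta\tau=-\oc$, performing the modification, and observing that the new obstruction cocycle vanishes.
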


Now we wish to ask under which conditions, if we are given two framings of the full-dimensional moduli space components of $\wt{\cF}$, we can be sure of obtaining the same stable homotopy type for a framed flow category described as a disc flow cover of $\wt{\cF}$ (see Definition \ref{defn:disc_cover}).  A situation in which we certainly get the same stable homotopy type is when the two framings of $\wt{\cF}$ can be related through an isotopy of framed embeddings, and this situation can be analysed by obstruction theoretic arguments very similar in flavor to those above.  We therefore allow ourselves to be more brief.

Firstly, let us write $\iota_t$ for a 1-parameter family of embeddings of $\cF$ where $0 \leq t \leq 1$, and assume that $\iota_0$ and $\iota_1$ both have given coherent framings of the full-dimensional moduli space components of $\wt{\cF}$ which give identical sign assignments.  Since the sign assignments of $\iota_0$ and $\iota_1$ agree, we can extend the framings at $t=0$ and $t=1$ to frame each $0$-dimensional moduli space of $\iota_t$ for $0 < t < 1$.

Assume inductively then that we have extended the framings of the full-dimensional $i$-dimensional moduli spaces of $\iota_0$ and $\iota_1$ to framings of the full-dimensional $i$-dimensional moduli spaces of each $\iota_t$ for all $i < k$ for some $k \geq 1$.  We define an obstruction class $\oc$.

\begin{definition}
	\label{defn:obstruction_class_2}
If $M$ is a $k$-dimensional full-dimensional moduli space component of $\M_{\wt{\cF}}(b,a)$, then the $k$-sphere 
\[ S^k_M = \iota_0(M) \cup \iota_1(M) \cup \bigcup_t \iota_t{\partial M} \]

\noindent has already been framed.  Comparing this framing to a null-homotopic framing for each $M$ gives an element of $\pi_k (O)$ thus giving a cochain
\[ \oc \in C^{k+1}(\cw(\wt{\cF}),\pi_k(O) ) {\rm .}\]
\end{definition}

\noindent It is straightforward to check that $\oc$ is a cocycle.

If now $\tau \in C^{k}(\cw(\wt{\cF}),\pi_k(O) )$ is a cochain, we can use $\tau$ to modify the framings of the $(k-1)$-dimensional full-dimensional components of each embedding $\iota_t$ as follows.  Let $C_N \in C_{k}(\cw(\wt{\cF}))$ and consider points $p$ in the $0$-dimensional moduli spaces of $\wt{\cF}$ such that either $N \times \{ P \}$ or $\{p\} \times N$ appears in the boundary of $k$-dimensional full-dimensional moduli spaces components.  Then for each such $p$ consider a small ball in the interior of each $k$-disc $[0,1] \times N \times \{ p \}$ or $[0,1] \times \{ p \} \times N$.  After stabilization if necessary, modify the framings on these small balls by the element $\tau(C_N) \in \pi_k(O(r)) = \pi_k(O)$ for large enough $r$.  It is then an easy check that this modifies the cocycle $\oc$ by the coboundary $\delta \tau$.  Hence we immediately obtain the following proposition.

\begin{proposition}
	\label{prop:obstruction_class_2}
	If the cocycle $\oc$ of Definition \ref{defn:obstruction_class_2} satisfies
	\[ [\oc] \in H^{k+1}(\cw(\wt{\cF}), \pi_{k}(O)) = 0 \]
		
	\noindent then the coherent framing of the full-dimensional moduli space components $\iota_0$ and $\iota_1$ of dimension $k$ can be extended to the $1$-parameter family of framed embeddings $\iota_t$, after perhaps modifying the framing on the $(k-1)$-dimensional components away from $t=0$ and $t=1$. \qed
\end{proposition}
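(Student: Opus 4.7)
The plan is to run a standard obstruction-theoretic argument: exploit the fact that $[\oc]=0$ to write $\oc = \delta\tau$ for some cochain $\tau \in C^{k}(\cw(\wt{\cF}),\pi_k(O))$, then use $\tau$ to modify the existing coherent framing of the $(k-1)$-dimensional family of embeddings so that the resulting obstruction cocycle vanishes not merely up to coboundary, but identically. Once the cocycle vanishes on every $k$-dimensional full-dimensional component $M\subset \M_{\wt{\cF}}(b,a)$, the framing of $S^k_M$ is null-homotopic and therefore extends over the interior of the $(k+1)$-ball swept out by $\iota_t(M)$, which is exactly what we need.

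First I would apply Universal Coefficients in combination with the hypothesis $[\oc]=0$ to produce $\tau\in C^{k}(\cw(\wt{\cF}),\pi_k(O))$ with $\delta\tau=\oc$. Next I would modify the $1$-parameter framing on each $(k-1)$-dimensional full-dimensional component $N$ by the element $\tau(C_N)\in \pi_k(O)$ via the small-ball construction described just before the proposition: pick a small $k$-ball in the interior of each face $[0,1]\times N\times\{p\}$ or $[0,1]\times\{p\}\times N$ that appears on the boundary of some $k$-dimensional moduli component, and twist the framing there by a representative $B^k\to O(r)$ of $\tau(C_N)$ sending $\partial B^k$ to the identity. Since these balls sit in the open interior of the $t$-interval and away from $t=0,1$, the framings at $\iota_0$ and $\iota_1$ are untouched, which is essential.

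The routine check (verified in the paragraph preceding the statement) is that this modification alters $\oc$ by exactly $\delta\tau$, so after the modification the obstruction cocycle becomes $\oc-\delta\tau=0$. Finally, for each $k$-dimensional full-dimensional component $M$, the sphere $S^k_M=\iota_0(M)\cup\iota_1(M)\cup\bigcup_t\iota_t(\partial M)$ now carries a framing representing the zero element of $\pi_k(O)$ when compared to any null-homotopic framing; hence the framing extends across the $(k+1)$-disc $\bigcup_t\iota_t(M)$, completing the coherent framing of the $k$-dimensional level of the $1$-parameter family.

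The main obstacle, and the only part that requires real care, is ensuring that the modification of the $(k-1)$-dimensional framings is coherent: the small-ball twist must be carried out simultaneously on every face of every higher-dimensional moduli component containing $N\times\{p\}$ or $\{p\}\times N$, and one must check that the collar structure of the flow category (as in \cite{AusBra}) makes the modifications compatible with the product framings on boundary overlaps. This is precisely the content of the observation that the twist changes $\oc$ by $\delta\tau$, so once that identity is in hand the argument closes. Everything else is the formal obstruction-theoretic bookkeeping already rehearsed in the proof of Proposition \ref{prop:obstruction_class}.
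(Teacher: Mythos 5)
Your proof is correct and is essentially the paper's own argument spelled out: the paper declares the proposition immediate from the preceding paragraph (which sets up exactly the $\tau$-modification on the $(k-1)$-dimensional components and notes that it changes $\oc$ by $\delta\tau$), and then places the \qed in the statement itself. One small nit: you do not need Universal Coefficients anywhere — the vanishing $[\oc]=0$ already means $\oc$ is a coboundary, so $\oc=\delta\tau$ for some $\tau\in C^{k}(\cw(\wt{\cF}),\pi_k(O))$ directly.
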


Now we wish to apply Propositions \ref{prop:obstruction_class} and \ref{prop:obstruction_class_2} to the case of the glued flow category $\cL^n(D_\br)$ defined in Subsection \ref{subsec:matched_category} associated to a glued diagram $D_\br$.  This arose as a \emph{disc cover} (see Definition \ref{defn:disc_cover}) of the sock flow category $\cS^n_\br$.  In fact, the covering morphism $\Phi: \cL^n(D_\br) \rightarrow \cS^n_\br$ factors through a partial flow category $\wt{\cS^n_\br}$ which we now describe.

We begin with $\wt{\cS^n_r}$ where $r\in \Z-\{0\}$, and include all $0$-dimensional moduli spaces of $\cS^n_r$. Recall from Subsection \ref{subsec:the_n_sock} that the $0$-dimensional moduli spaces are denoted $P$, $M$ and $P_i$ with $i=1,\ldots,n$. The $1$-dimensional moduli spaces have boundary $\{(P,P_i),(M,P_{i+1})\}$ or $\{(P_i,P),(P_{i+1},M)\}$ for $i=1,\ldots,n$ (with identification $P_1=P_{n+1}$).
We discard the interior of intervals with boundary $\{(P,P_n),(M,P_1)\}$ and $\{(P_n,P),(P_1,M)\}$, but include in $\wt{\cS^n_r}$ all other $1$-dimensional moduli spaces. Note that the moduli spaces $\M(2,0)$ and $\M(0,-2)$ contain exactly one interval each, and these are meant to be in $\wt{\cS^n_r}$. 

The $2$-dimensional moduli spaces $\M(2j+2,2j-1)$ for $j\geq 1$ or $\M(2j+1,2j-2)$ for $j\leq -1$ are $n$ disjoint copies of squares, as observed in Subsection \ref{subsec:the_n_sock}. We discard the squares with corners $\{(P,P_{n-1},P),(P,P_n,M),(M,P_1,M),(M,P_n,P)\}$ and $\{(P,P_n,P),(P,P_1,M),(M,P_2,M),(M,P_1,P)\}$, but include the other $n-2$ squares in $\wt{\cS^n_r}$. This describes all the full-dimensional moduli spaces of $\wt{\cS^n_r}$. Note that $\wt{\cS^2_r}$ does not contain $2$-dimensional moduli spaces.

\begin{definition}
	\label{defn:partial_sock}
	For $\br = (r_1, r_2, \ldots, r_m)$, the partial flow category $\wt{\cS^n_\br}$ is defined as the largest partial flow category of $\cS^n_\br$ satisfying the following property.
	
	If $a=(a_1,\ldots,a_j,\ldots,a_m)$ and $b=(a_1,\ldots,a_j+i,\ldots,a_m)$ are objects of $\cS^n_\br$, then
\[
\M_{\wt{\cS^n_\br}}(b,a)=\M_{\wt{\cS^n_{r_j}}}(a_j+i,a_j).
\]
\end{definition}

\begin{proposition}
	\label{prop:cover_partial_sock}
	The covering morphism $\Phi: \cL^n(D_\br) \rightarrow \cS^n_\br$ factors through $\wt{\cS^n_\br}$.
\end{proposition}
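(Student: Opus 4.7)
The plan is to verify that for every moduli space of $\cL^n(D_\br)$, its image under $\Phi$ lies in $\wt{\cS^n_\br}$. Two observations allow a reduction to the single-tangle case. First, by Definition \ref{defn:partial_sock}, $\wt{\cS^n_\br}$ is defined coordinate-wise. Second, the moduli spaces of $\cS^n_\br$ factor as products of single-tangle moduli spaces by Lemma \ref{lem_prodflowcat}. Thus it suffices to show that $\Phi$ maps the moduli spaces in $\cL^n$ associated to a single tangle into $\wt{\cS^n_{r_j}}$.

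For the single-tangle case I proceed by dimension. On 0-dim moduli spaces there is nothing to verify, since $\wt{\cS^n_{r_j}}$ retains every 0-dim component of $\cS^n_{r_j}$ by construction. For 1-dim moduli spaces, the discarded components are the interiors of the ``wrap-around'' intervals in each $\M_{\cS^n_{r_j}}(k+2, k)$ with $k \geq 1$. I would enumerate the chains $(E, z) <_1 (F, w) <_1 (D, y)$ in $\cL^n$ that give rise to the 1-dim moduli spaces, apply Definition \ref{defn:Phi_0dim} at each boundary point to identify its image as a pair in $\M_{\cS^n_{r_j}}(\Phi F, \Phi E) \times \M_{\cS^n_{r_j}}(\Phi D, \Phi F)$, and then use a case analysis over the cases 1a--1d of Definition \ref{def:partorder} to check that each image boundary pair matches the endpoints of a non-wrap-around interval. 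The essential input is the constraint $y(C_L) y(C_R) = x^{n-1} z(C_L) z(C_R)$ in case 1d together with the analogous single-circle constraint in case 1b, which controls the allowed range of $k$ in the label shift $y(C_L) = x^k z(C_L)$, forbidding the cyclic wrap-around pattern $\{(P, P_n),(M, P_1)\}$ (or its dual) from arising as the boundary of any $\cL^n$ 1-dim component.

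A parallel analysis handles the 2-dim case. The 2-dim components of $\cL^n$'s moduli spaces come from 3-step chains together with the ladybug and double-ladybug configurations treated in Subsection \ref{subsec:matched_category}, and their boundary hexagons must be matched against the non-discarded squares in each 2-dim $\cS^n_{r_j}$ moduli space. The main obstacle is this 2-dim verification: the interplay between the ladybug matching and the label shifts across the chains introduces substantial combinatorial bookkeeping. However, much of the required consistency has already been established implicitly when constructing $\Phi$ as a disc cover in Subsection \ref{subsec:matched_category}, so the remaining check amounts to confirming that the two specific corner patterns listed for the discarded squares are never realized by any admissible sequence of labellings.
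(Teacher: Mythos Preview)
Your reduction to the single-tangle case and your 1-dimensional analysis are on the right track, and the core observation you identify---that the label-shift constraints in cases 1b and 1d of Definition \ref{def:partorder} forbid the wrap-around boundary pairs $\{(P,P_n),(M,P_1)\}$---is exactly the point. The paper phrases this more compactly: the forbidden composition corresponds at the chain level to multiplication by $x^n$ on $\Z[x]/x^n$, which is zero, so no such pair of $0$-dimensional morphisms can arise between objects of $\cL^n(D_\br)$.

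However, your 2-dimensional discussion goes astray. The discarded $2$-dimensional squares in $\wt{\cS^n_{r_j}}$ live in moduli spaces $\M(2j+2,2j-1)$ (or their duals) where only a \emph{single} coordinate changes, by three. Ladybug and double-ladybug configurations, by contrast, involve surgeries and hence changes in \emph{multiple} coordinates; they are irrelevant to whether these particular single-coordinate squares are hit, and there is no ``matching of hexagons against squares'' to perform. The actual check in dimension two is the same as in dimension one: each discarded square has among its corners a composition of the form $P\cdot P_n$ or $M\cdot P_1$ (look at the corner list $\{(P,P_{n-1},P),(P,P_n,M),(M,P_1,M),(M,P_n,P)\}$ and its companion), which again corresponds to an $x^n$ factor and so cannot occur between objects of $\cL^n(D_\br)$. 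The paper dispatches this in one sentence.

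You should also make explicit, as the paper does, that the image of $\Phi$ is automatically a partial flow category of $\cS^n_\br$ (since $\Phi$ is a morphism of flow categories and hence respects composition). This is what lets you reduce the entire question to checking that the specifically discarded intervals and squares are not hit: by the maximality clause in Definition \ref{defn:partial_sock}, once the single-coordinate moduli spaces of the image are contained in those of $\wt{\cS^n_{r_j}}$, the whole image lies in $\wt{\cS^n_\br}$, and no separate treatment of higher-dimensional or multi-coordinate moduli spaces is needed.
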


\begin{proof}
	Certainly those moduli spaces in the image of $\Phi$ form a partial flow category of $\cS^n_\br$.  It is therefore sufficient to check that the discarded intervals from Definition \ref{defn:partial_sock} are never in this image.  This follows from the observation that there are no three objects $a,b,c$ of $\cL^n(D_\br)$ such that
	\begin{align*} 
	 \Phi(a) &= (i_1, \ldots, i_j, \ldots, i_m),\\ \Phi(b) &= (i_1, \ldots, i_j + 1, \ldots, i_m),\\ \Phi(c) &= (i_1, \ldots, i_j + 2, \ldots, i_m) 
	\end{align*}

	\noindent with $0 \notin \{ i_j, i_j + 2 \}$ that give rise to a composition of points $P_n$ and $P$, or $P_1$ and $M$ in their moduli spaces.  At the chain complex level this is merely the observation that a $n$-dotted cylinder cobordism is trivial, or in other words that multiplication by $x^n$ is the trivial map on $\Z[x]/x^n$. Also, compositions that correspond to the discarded $2$-dimensional moduli spaces are not possible for the same reason.
\end{proof}

We wish to use the obstruction theory that we have set up in the specific case of the partial flow category $\wt{\cS^n_\br}$ and its cover $\cL^n(D_\br)$.  It turns out that this is a particularly amenable situation.

\begin{proposition}
	\label{prop:obstruct_vanishes}
	The CW-complex $\cw(\wt{\cS^n_\br})$ deformation retracts to a point.
\end{proposition}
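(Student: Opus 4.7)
The plan is to exploit the product structure of $\cS^n_\br$ to reduce to a single-factor problem, then prove contractibility of $\cw(\wt{\cS^n_r})$ by a skeletal induction with explicit cellular collapses at each stage.

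First, I would observe that the partial flow category $\wt{\cS^n_\br}$ decomposes as a product $\wt{\cS^n_{r_1}} \times \cdots \times \wt{\cS^n_{r_m}}$ in the sense of Lemma~\ref{lem_prodflowcat}. The category $\cS^n_\br$ is itself the Morse flow category of the product Morse function $G = \sum_j \sgn(r_j) g(x_j)$ on a product of lens spaces, so iterating Lemma~\ref{lem_prodflowcat} exhibits its moduli spaces as products of single-factor moduli with extra interval factors. The single-coordinate condition of Definition~\ref{defn:partial_sock} together with the ``largest'' requirement force the multi-coordinate components of $\wt{\cS^n_\br}$ to be precisely the products of retained single-factor components, so the $\cw$ construction (which carries such products to products of CW complexes) yields $\cw(\wt{\cS^n_\br}) \cong \prod_j \cw(\wt{\cS^n_{r_j}})$. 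A finite product of contractible spaces is contractible, and a duality argument (reversing the Morse function shows $\cS^n_{-r}$ is dual to $\cS^n_r$) further reduces us to showing that $\cw(\wt{\cS^n_r})$ is contractible for a single $r > 0$.

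For $r > 0$, I use the skeletal filtration $X_0 \subset X_1 \subset \cdots \subset X_r = \cw(\wt{\cS^n_r})$ where $X_k$ is generated by the cells whose indexing objects lie in $\{0, 1, \ldots, k\}$. Since $X_0$ is a single point, it suffices to show that each inclusion $X_{k-1} \hookrightarrow X_k$ is a homotopy equivalence; for a CW pair this follows from the pointed quotient $Y_k = X_k/X_{k-1}$ being contractible. Reading off the structure of $\wt{\cS^n_r}$ given after Definition~\ref{defn:partial_sock}, the quotient $Y_k$ has the basepoint plus the $0$-cell for object $k$, the $1$-cells from $\M(k, k-1)$, the $2$-cells from the retained components of $\M(k, k-2)$, and (only for $k$ even with $k \geq 4$) the $3$-cells from the $n-2$ retained squares of $\M(k, k-3)$. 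To show $Y_k$ is contractible I write down an explicit collapsing sequence: for odd $k \geq 3$ the edge $P_n$ is a free face of the $2$-cell attached along $P_{n-1} P_n^{-1}$; collapsing this pair frees $P_{n-1}$, and iterating leaves only the edge $P_1$, which retracts to a point. The $k = 1, 2$ cases are similar but smaller. For even $k \geq 4$ one first checks that the attaching map of each $3$-cell $C(S_l)$ has image in the quotient $Y_k$ equal to the $2$-sphere obtained by gluing the ``horizontal'' $2$-cells $D_l$ and $D_{l+1}$ of $S_l$ along their common boundary loop $P \cdot M^{-1}$, with all the ``vertical'' data of $S_l$ (which factors through intermediate levels $k-2$ and $k-1$) collapsed into $X_{k-1}$. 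Consequently $D_{n-1}$ is a free face of $C(S_{n-2})$, and iterating collapses the $n-2$ pairs $\bigl(C(S_l), D_{l+1}\bigr)$ for $l = n-2, n-3, \ldots, 1$; the single surviving $2$-cell $D_1$ then collapses as in the $k=2$ case.

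The main technical obstacle is the attaching-map computation for the even-$k \geq 4$ case. One must unpack Definition~\ref{cwcomplex} for $C(S_l) = S_l \times [k-3, k]$, track the four edges of the square $S_l$ (indexed as in Subsection~\ref{subsec:the_n_sock}) under the compositions $\circ : \M(c, k-3) \times \M(k, c) \to \M(k, k-3)$ at the intermediate levels $c = k-2, k-1$, and observe that the portions of $\partial C(S_l)$ using $2$-cells of $\M(k-1, k-3)$ or $1$-cells of $\M(k-1, k-2)$ and $\M(k-2, k-3)$ (all of which lie in $X_{k-1}$) get collapsed to the basepoint in $Y_k$, leaving only the two $2$-cells $D_l, D_{l+1}$ from $\M(k, k-2)$ and the two $1$-cells $P, M$ from $\M(k, k-1)$. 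Once this bookkeeping is in place the collapsing sequence above goes through cleanly.
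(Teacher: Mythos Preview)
Your approach is essentially the same two-step strategy as the paper's: reduce to a single factor via a product formula for $\cw$, then show $\cw(\wt{\cS^n_r})$ is contractible by induction on $r$ (your filtration $X_k$ is exactly $\cw(\wt{\cS^n_k})$ sitting inside $\cw(\wt{\cS^n_r})$). For the single-factor step, the paper simply lists the cells and says ``induction on $r$ shows simply connected and acyclic''; your explicit collapse sequences for $Y_k$ are a more hands-on version of the same induction and are correct. Two points deserve attention, however. First, the parenthetical ``the $\cw$ construction (which carries such products to products of CW complexes)'' is precisely the content of Lemma~\ref{lem:morseproductobst}, which the paper proves over a full page by carefully analyzing how the attaching maps $\xi$ of $\cw(\Cat_F)$ decompose; Lemma~\ref{lem_prodflowcat} alone only gives the product structure on moduli spaces, not on $\cw$, so you should flag this as a lemma to be proved rather than an observation. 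Second, your sentence ``for a CW pair this follows from the pointed quotient $Y_k = X_k/X_{k-1}$ being contractible'' is false as a general principle (take $A$ an acyclic non-simply-connected $2$-complex and $X = CA$); the correct inductive step is that $X_{k-1}$ is already contractible by hypothesis, whence the quotient map $X_k \to Y_k$ is a homotopy equivalence, and contractibility of $Y_k$ then gives contractibility of $X_k$.
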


We break the proof of this into a couple of lemmata.

\begin{lemma}
	\label{lem:morseproductobst}
	Suppose that $f : M \rightarrow \R$ and $g : N \rightarrow \R$ are Morse functions and let $F = f + g : M \times N \rightarrow \R$.  Given Morse-Smale gradients for $f$ and $g$ we form the flow categories $\Cat_f$ and $\Cat_g$.  Concatenating the gradients we form the flow category $\Cat_F$.  We assume that $\Cat_f$ and $\Cat_g$ are both disc flow categories (it follows that $\Cat_F$ is also by Lemma \ref{lem_prodflowcat}).  Then as topological spaces, we have 
	\[ \cw(\Cat_F) = \cw(\Cat_f) \times \cw(\Cat_g) {\rm .} \]
\end{lemma}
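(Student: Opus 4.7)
The plan is to build an explicit homeomorphism $\Psi\colon \cw(\Cat_f)\times\cw(\Cat_g)\to \cw(\Cat_F)$ cell-by-cell, using Lemma \ref{lem_prodflowcat} to match moduli-space components on the two sides.

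First I would set up the cell-level correspondence. Both complexes have 0-cells indexed by pairs $(a,p)\in\Ob(\Cat_f)\times\Ob(\Cat_g)=\Ob(\Cat_F)$. The higher cells of $\cw(\Cat_f)\times\cw(\Cat_g)$ are products $C(M_1)\times C(M_2)$, with each factor either a 0-cell or a component of a moduli space. Lemma \ref{lem_prodflowcat} places these in bijection with the higher cells of $\cw(\Cat_F)$: components of $\M_f(b,a)$ when $p=q$, components of $\M_g(q,p)$ when $a=b$, and products $M_1\times M_2\times[0,1]$ otherwise. Dimensions match, since both $C(M_1)\times C(M_2)$ and $C(M_1\times M_2\times[0,1])$ have dimension $\dim M_1+\dim M_2+2$.

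Next I would define $\Psi$ on cells. For the trivial-factor cases it is a translation of grading intervals (e.g.\ $\{a\}\times M_2\times[|p|,|q|]\to M_2\times[|a|+|p|,|a|+|q|]$). For a genuine product cell $M_1\times[|a|,|b|]\times M_2\times[|p|,|q|]$, I would fix a homeomorphism of rectangles
\[
\rho_{a,b,p,q}\colon [|a|,|b|]\times[|p|,|q|]\longrightarrow [0,1]\times [|a|+|p|,|b|+|q|]
\]
sending the corners $(|a|,|p|),(|b|,|p|),(|a|,|q|),(|b|,|q|)$ to points of heights $|a|+|p|,|b|+|p|,|a|+|q|,|b|+|q|$ on the left/right edges of the target, and sending the L-shaped arc $\{|a|\}\times[|p|,|q|]\cup[|a|,|b|]\times\{|q|\}$ to $\{0\}\times[|a|+|p|,|b|+|q|]$ with the complementary L going to $\{1\}\times[|a|+|p|,|b|+|q|]$. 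Then $\Psi$ on the cell is $(x_1,t_1,x_2,t_2)\mapsto(x_1,x_2,\rho_{a,b,p,q}(t_1,t_2))$, viewed inside $M_1\times M_2\times[0,1]\times[|a|+|p|,|b|+|q|]$, which is visibly a homeomorphism onto the target cell.

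Finally I would verify that these cellwise maps glue to a global homeomorphism, i.e.\ that $\Psi$ intertwines the attaching maps. The attaching maps on either side are projections onto lower cells as in Definition \ref{cwcomplex}. On the product side, the boundary of $C(M_1)\times C(M_2)$ decomposes via the two factor attaching maps into products of lower cells. On the $\Cat_F$ side, the boundary of $M_1\times M_2\times[0,1]$ is described explicitly in the proof of Lemma \ref{lem_prodflowcat}: the faces $M_1\times M_2\times\{0\}$ and $M_1\times M_2\times\{1\}$ correspond to the two $\boxtimes$-orderings $\M(a,b)\boxtimes\M(p,q)$ and $\M(p,q)\boxtimes\M(a,b)$, while $\partial M_1\times M_2\times[0,1]$ and $M_1\times\partial M_2\times[0,1]$ glue in the remaining lower-dimensional $\Cat_F$-moduli spaces through lower-dimensional rectangles of the same form. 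The choice of $\rho$ is dictated precisely so that the two L-shaped arcs of the source rectangle (carrying the two broken-flowline orderings) map to the $\{0\}$- and $\{1\}$-faces of the target. The hard part is this matching: by induction on total grading difference one must check coherence of all corner identifications and $\boxtimes$-orderings simultaneously. The check is combinatorial but tedious; it reduces to tracking how the $[0,1]$-mediator in $M_1\times M_2\times[0,1]$ interpolates between the two orderings, in lockstep with the diagonal across each rectangle in the product CW structure.
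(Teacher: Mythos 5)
Your approach is essentially the same as the paper's: both reduce the statement, via Lemma \ref{lem_prodflowcat}, to identifying the boundary of each cell $C(M_{(ap,bq)})$ of $\cw(\Cat_F)$ with the boundary of the product cell $C(M_{(a,b)})\times C(M_{(p,q)})$, and both close the argument by induction on grading difference. The paper expresses the key step as the decomposition identity $S^{m+n-1}_{M_{(ap,bq)}} = C(M_{(a,b)}) \times S^{n-1}_{M_{(p,q)}} \cup S^{m-1}_{M_{(a,b)}} \times C(M_{(p,q)})$, while you phrase it via the rectangle homeomorphism $\rho$ carrying the two L-shaped arcs to the two ends of the $[0,1]$ mediator; these are the same observation, so the ``combinatorial but tedious'' coherence check you sketch is exactly the boundary bookkeeping that the paper actually carries out, and you should do it to complete the proof.
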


\begin{proof}
	Let $a$, $b$ be critical points of $f$ with $\ind(a) - \ind(b) = m > 1$ and let $p$, $q$ be critical points of $g$ with $\ind(p) - \ind(q) = n > 1$.  We induct on $m+n$, with the root case being clear.  We assume without affecting the argument (apart from the clarity of notation) that $\ind(b) = \ind(q) = 0$.
	
	We borrow some notations from the proof of Lemma \ref{lem_prodflowcat}.
	
	Consider a component $M_{(ap,bq)} \subset \M_{\Cat_F}(ap, bq)$.  And let $M_{(a,b)} \subset \M_{\Cat_f}(a,b)$ and $M_{(p,q)} \subset \M_{\Cat_g}{(p,q)}$ be the corresponding components so that $M_{(ap,bq)} \cong M_{(a,b)} \times M_{(p,q)} \times [0,1]$.
	
	For a general disc flow category $\Cat$ and a component $M \subset \M_\Cat(x,y)$ with $|x| = T$ and $|y| = 0$, the attaching map of the corresponding $N$-cell $C(M)$ of $\cw(\Cat)$, factors through the map
	\[ \partial (C(M)) = \partial( M \times [0,T]) \rightarrow S^{T-1}_M \cong S^{T-1}  \]
	
	\noindent given as a composition of quotient maps in the following way.  First crush each disc $M \times \{ 0 \}$ and $M \times \{ T \}$ to points.  Then we consider each sequence of components
	\[ N_1 \subset \M_\Cat(c_1,c_0), \ldots , N_k \subset \M_\Cat(c_k,c_{k-1}) \]
	
	\noindent such that $c_0 = b$, $c_k = a$, and $0 < |c_1|< \cdots < |c_{k-1}| < T$ and $N_1 \times \cdots \times N_k \subset \partial M$.  We crush $N_1 \times \cdots \times N_k \times [0, T]$ to 
	\[ (N_1 \times [0,|c_1|]) \cup \cdots \cup (N_k \times [|c_{k-1}|,T]) \]
	
	\noindent in the obvious way.  That this gives topologically a $(T-1)$-sphere follows from the fact that each $N_i$ is a disc.
	
	This gives us a useful shorthand for thinking of the attaching map since it includes a description of $S^{T-1}_M$ itself as a cell-complex (the cells being subsets of the form $N_i \times [|c_i|,|c_{i-1}|]$) with an implicit cellular map into the $(T-1)$-skeleton of $\cw(\Cat)$ that gives the attaching map of $C(M)$.
	
	Now consider the cell of $\cw(\Cat_F)$ corresponding to $M_{(ap,bq)}$,
	\[ C(M_{(ap,bq)}) = M_{(ap,bq)} \times [0,m+n] {\rm .} \]
	
	\noindent The attaching map of $C(M_{(ap,bq)})$ factors through the quotient map
	\[ \partial(C(M_{(ap,bq)})) \rightarrow S^{m+n-1}_{M_{(ap,bq)}} \cong S^{m+n-1} \]
	
	\noindent as described above.
	
	We wish to show that
	\[ S^{m+n-1}_{M_{(ap,bq)}} = C(M_{(a,b)}) \times S^{n-1}_{M_{(p,q)}} \cup S^{m-1}_{M_{(a,b)}} \times C(M_{(p,q)}) {\rm ,} \]
	
	\noindent which implies that $\cw(\Cat_F)$ is homeomorphic to the product CW-complex $\cw(\Cat_f) \times \cw(\Cat_g)$.
	
	For each $c \in \Ob(\Cat_f)$ such that $M_{(a,b)} \cap (\M_{\Cat_f}(c,b) \times \M_{\Cat_f}(a,c)) \not= \phi$, we have $\M_{\Cat_F}(cp,cq) \cap M_{(ap,bq)} \cong M_{(p,q)}$.
	
	We know from the construction of the obstruction complex associated to a disc flow category that
	\begin{eqnarray*} S_{(M_{(ap,bq)})} &=& \left( \bigcup_{\substack{c <_1 a \\ N \in \comp(\M_{\Cat_F}(cp,bq)) \\ x \in \M_{\Cat_F}(ap,cp) \\ N \times \{ x \}  \in \partial M_{(ap,bq)}  }} C(N) \right) \cup \left( \bigcup_{\substack{b <_1 c \\ N \in \comp(\M_{\Cat_F}(ap,cq)) \\ x \in \M_{\Cat_F}(cq,bq) \\ \{ x \} \times N \in \partial M_{(ap,bq)}  }} C(N) \right) \\
		&\cup& \left( \bigcup_{\substack{r <_1 p \\ N \in \comp(\M_{\Cat_F}(ar,bq)) \\ x \in \M_{\Cat_F}(ap,ar) \\ N \times \{ x \}  \in \partial M_{(ap,bq)}  }} C(N) \right) \cup \left( \bigcup_{\substack{q <_1 r \\ N \in \comp(\M_{\Cat_F}(ap,br)) \\ x \in \M_{\Cat_F}(br,bq) \\ \{ x \} \times N \in \partial M_{(ap,bq)}  }} C(N) \right)
		\end{eqnarray*}

By the induction hypothesis, the bottom line is $C(M_{(a,b)}) \times S^{n-1}_{M_{(p,q)}}$, while the top line is $S^{m-1}_{M_{(a,b)}} \times C(M_{(p,q)})$.

Furthermore, we see that the intersection is given by
\begin{align*}
	(C(M_{(a,b)}) \times S^{n-1}_{M_{(p,q)}}) &\cap (S^{m-1}_{M_{(a,b)}} \times C(M_{(p,q)})) \\
	&=  \bigcup_{\substack{a \geq c_1 >_{(m-1)} c_2 \geq b, \,\, p \geq d_1 >_{(n-1)} d_2 \geq q \\ N \in \comp(\M_{\Cat_F}(c_1 d_1,c_2 d_2)) \\ x \, \, {\rm such} \,\, {\rm that} \,\, N \times \{ x \}  \in \partial M_{(ap,bq)}  }} C(N) \\
	&= S^{m-1}_{M_{(a,b)}} \times S^{n-1}_{M_{(p,q)}}  \qedhere
\end{align*}
\end{proof}

Lemma \ref{lem:morseproductobst} concerns disc flow categories arising as Morse flow categories.  We have used in the proof the fact that we understand for a Morse flow category how $M_{(ap,bq)}$ is constructed as a manifold with corners from $M_{(a,b)}$ and $M_{(p,q)}$.  The flow category $\cS^n_\br$ is built from a Morse flow category for products of lens space by identifying certain moduli spaces.  From the construction it is immediate that the moduli spaces for $\cS^n_{(r_1,\ldots,r_m)}$ can be constructed as manifolds with corners from those of $\cS^n_{(r_1,\ldots,r_{m-1})}$ and $\cS^n_{r_m}$ and that this construction is the same as if the categories were Morse.  Hence the arguments of the lemma apply to $\cS^n_\br$ and we see that
\[ \cw(\cS^n_{(r_1, \ldots, r_m)}) = \prod_{i=1}^{m} \cw(\cS^n_{r_i}) {\rm .} \]

Now $\wt{\cS^n_\br}$ is a partial flow category of $\cS^n_\br$, defined to the be largest such that omits the interiors of certain $1$- and $2$-dimensional moduli spaces of $\cS^n_\br$ (see Definition \ref{defn:partial_sock} for details).  Hence the associated CW-complex $\cw(\wt{\cS^n_\br})$ is the largest subcomplex of $\cw(\cS^n_\br)$ that omits the corresponding $2$- and $3$-cells.  In the identification above, the omitted $2$-cells are products of $2$-cells in each factor with all the $0$-cells of the other factors, and similarly with the $3$-cells.  It follows that
\[ \cw(\wt{\cS}^n_{(r_1, \ldots, r_m)}) = \prod_{i=1}^{m} \cw(\wt{\cS}^n_{r_i}) {\rm .} \]

So to prove Proposition \ref{prop:obstruct_vanishes} it is enough to verify the following lemma.

\begin{lemma}
	\label{lem:sockobscomptriv}
	The complex $\cw(\wt{\cS^n_r})$ is contractible.
\end{lemma}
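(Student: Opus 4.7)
The plan is to prove the lemma by induction on $|r|$. The case $r<0$ follows from $r>0$ by the duality between $\wt{\cS^n_r}$ and $\wt{\cS^n_{-r}}$ noted in the construction. The base case $r=1$ is immediate, since $\wt{\cS^n_1}$ has two objects with a single $0$-dimensional moduli point, so $\cw(\wt{\cS^n_1})$ is a closed interval.

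For the inductive step from $r-1$ to $r$, I would consider the CW-pair $(X,A) = (\cw(\wt{\cS^n_r}), \cw(\wt{\cS^n_{r-1}}))$, where $A$ is the subcomplex whose cells are indexed by objects and moduli spaces contained in $\{0,1,\ldots,r-1\}$. Since $A$ is contractible by the inductive hypothesis, the quotient map $X \to X/A$ is a homotopy equivalence, and it suffices to prove $X/A$ is contractible. The quotient $X/A$ has two $0$-cells (the image of $A$ and the object $\{r\}$); 1-cells from $\M(r,r-1)$ (two edges $\{P,M\}$ if $r$ is even, $n$ edges $\{P_1,\ldots,P_n\}$ if $r$ is odd); 2-cells from $\M(r,r-2)$ (the $n-1$ retained intervals when $r\geq 3$, or the unique interval when $r=2$); and, only when $r\geq 4$ is even, 3-cells from $\M(r,r-3) = \M(2j+2, 2j-1)$ with $j = r/2 - 1 \geq 1$, of which $n-2$ squares are retained.

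I would verify contractibility of $X/A$ by a direct analysis of the attaching maps, after crushing the $A$-portions. When $r$ is odd, each of the $n-1$ retained 2-cells from $\M(r,r-2)$ (intervals with boundary $(P_k,P),(P_{k+1},M)$) acquires attaching loop $P_k\cdot P_{k+1}^{-1}$ in $X/A$, identifying $P_k\sim P_{k+1}$ for each $k$ and making $X/A$ simply connected and acyclic. For $r=2$, the single 2-cell from $\M(2,0)$ has attaching loop $P\cdot M^{-1}$ killing the sole generator of $H_1$, so $X/A$ is a disc. When $r\geq 4$ is even, each of the $n-1$ 2-cells from $\M(r,r-2)$ has the same attaching loop $P\cdot M^{-1}$, killing $H_1$ but creating $H_2$ of rank $n-2$; the $n-2$ 3-cells from the retained squares, with corners $(P,P_k,P),(P,P_{k+1},M),(M,P_{k+2},M),(M,P_{k+1},P)$ for $k = 1, \ldots, n-2$, then have boundaries of the form $D_k - D_{k+1}$ in $C_2(X/A)$, killing this $H_2$ precisely.

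The main obstacle is the analysis in the even case $r\geq 4$: I must carefully trace the boundary $2$-sphere of each 3-cell, which decomposes via the corner and edge structure of the retained square into four $1$-dimensional moduli contributions (two from $\M(r-1,r-3)$ and two from $\M(r,r-2)$) together with subordinate $0$- and $1$-dimensional pieces. Upon crushing the $A$-cells (in particular the $\M(r-1,r-3)$-derived 2-cells and the $\M(r-2,r-3)$-derived 1-cells), I must verify the surviving contribution to $C_2(X/A)$ is precisely $D_k - D_{k+1}$ (up to a global sign). Once this is done for both parities, Whitehead's theorem applied to the simply connected, acyclic $X/A$ concludes contractibility of $X/A$, and by induction of $\cw(\wt{\cS^n_r})$.
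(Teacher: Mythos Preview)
Your proposal is correct and follows essentially the same approach as the paper: both argue by induction on $r$, reduce to showing simple connectivity and acyclicity, and invoke Whitehead. The paper writes down the explicit algebraic boundaries $\partial \tilde{N}_0 = P_1 - M_1$, $\partial N_{i,j} = P_i + P_{i,j} - P_{i,j+1} - M_i$, $\partial \tilde{N}_{i,j} = P_{i,j} + P_{i+1} - M_{i+1} - P_{i,j+1}$, and $\partial Q_{i,j} = N_{i,j} + \tilde{N}_{i,j+1} - \tilde{N}_{i,j} - N_{i,j+1}$, and then simply asserts that induction on $r$ gives the result; your version makes the inductive step explicit via the quotient $X/A$, which is exactly the natural way to carry out that induction. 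Your case analysis (odd $r$, $r=2$, even $r\geq 4$) and the resulting chain computations are right, and the boundary formula $\partial Q_{i,j}$ above confirms that in $C_\ast(X/A)$ the $3$-cells have boundary $\tilde{N}_{i,j+1}-\tilde{N}_{i,j}=D_{j+1}-D_j$, so the ``main obstacle'' you flag is resolved directly by the paper's boundary description without needing to trace the attaching sphere geometrically.
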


\begin{proof}
We give the proof in the case $r>0$, the other case being similar.

The partial flow category $\wt{\cS^n_r}$ has full-dimensional moduli spaces only of dimensions $0$, $1$ and $2$, hence the associated complex $\cw(\wt{\cS^n_r})$ is a 3-complex.  To construct $\cw(\wt{\cS^n_r})$ we start with $r+1$ $0$-cells, one for each object of $\wt{\cS^n_r}$, labelled $0, 1, \ldots , r$. 
We then add a $1$-cell $P$ between $0$ and $1$, and $1$-cells $P_i$, $M_i$ between $2i-1$ and $2i$ for $i=1,\ldots,\lfloor r/2\rfloor$, and $1$-cells $P_{i,0},\ldots,P_{i,n-1}$ between $2i$ and $2i+1$ for $i=1,\ldots,\lfloor (r-1)/2 \rfloor$.

From the general construction of $\cw(\cS^n_r)$ it is clear that the 2-cells are attached by mapping the edges of a square linearly over intervals, and the $2$-cells with the algebraic boundary information is given as follows.

There is a $2$-cell $\tilde{N}_0$ with
\[
 \partial \tilde{N}_0 = P_1 - M_1,
\]
and $2$-cells $N_{i,0},\ldots, N_{i,n-2}$ for $i=1,\ldots,\lfloor (r-1)/2 \rfloor$ with
\[
 \partial N_{i,j} = P_i + P_{i,j}-P_{i,j+1} - M_i,
\]
and finally $2$-cells $\tilde{N}_{i,0},\ldots, \tilde{N}_{i,n-2}$ for $i=1,\ldots,\lfloor (r-1)/2 \rfloor$ with
\[
 \partial \tilde{N}_{i,j} = P_{i,j}+P_{i+1}-M_{i+1}-P_{i,j+1}.
\]
The $3$-cells are given by $Q_{i,0},\ldots, Q_{i,n-3}$ for $i=1,\ldots,\lfloor (r-1)/2 \rfloor$ with
\[
 \partial Q_{i,j} = N_{i,j}+\tilde{N}_{i,j+1}- \tilde{N}_{i,j} - N_{i,j+1}.
\]
Using induction on $r$ it is now easy to see that $\cw(\wt{\cS^n_r})$ is simply connected, and has trivial homology.
\end{proof}

Now, noting that by Proposition \ref{prop:obstruction_class} and the vanishing of the class described there, we see that there exists a framed embedding of $\wt{\cS^n_\br}$ extending any given sign assignment.  Furthermore, we have the following theorem.

\begin{theorem}
	\label{thm:sock_frame_doesnt_matter}
	Given two framed embeddings $\iota_0$ and $\iota_1$ of the sock flow category $\wt{\cS^n_\br}$ extending a given sign assignment, we obtain as perturbed lifts two framed embeddings $i_0$, $i_1$ of the flow category $\cL^n(D_\br)$.  Then we have that $| (\cL^n(D_\br), i_0) |$ and  $|(\cL^n(D_\br), i_1)|$ are stably homotopy equivalent.
\end{theorem}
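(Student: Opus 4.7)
The plan is to combine the obstruction-theoretic machinery developed in Propositions \ref{prop:obstruction_class} and \ref{prop:obstruction_class_2} with the vanishing result of Proposition \ref{prop:obstruct_vanishes}, and then to transport the resulting $1$-parameter isotopy up through the disc cover $\Phi : \cL^n(D_\br) \to \wt{\cS^n_\br}$.

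First I would show that $\iota_0$ and $\iota_1$ can be joined by a $1$-parameter family of framed embeddings of $\wt{\cS^n_\br}$. Choose any smooth $1$-parameter family of embeddings $\iota_t$ with $t \in [0,1]$ interpolating the underlying (unframed) embeddings; this exists since the space of embeddings of a compact manifold in a sufficiently high-dimensional Euclidean space is connected. Because $\iota_0$ and $\iota_1$ extend the \emph{same} sign assignment, the framings of the $0$-dimensional moduli spaces at $t = 0,1$ extend canonically to a framing of the $0$-dimensional moduli spaces of each $\iota_t$. Inductively extend the framings on the $k$-dimensional full-dimensional components across the parameter interval using Proposition \ref{prop:obstruction_class_2}: the obstruction to extending over the $k$-skeleton lives in $H^{k+1}(\cw(\wt{\cS^n_\br}), \pi_k(O))$. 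By Proposition \ref{prop:obstruct_vanishes}, $\cw(\wt{\cS^n_\br})$ is contractible, so every such cohomology group vanishes, and the induction terminates with a coherent framing of the whole family $\iota_t$.

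Next I would lift the family $\iota_t$ to a $1$-parameter family $i_t$ of framed embeddings of $\cL^n(D_\br)$. By Proposition \ref{prop:cover_partial_sock} the covering morphism $\Phi$ factors through $\wt{\cS^n_\br}$, so on every moduli space of $\cL^n(D_\br)$ the map $\Phi$ is a local diffeomorphism onto a full-dimensional component of $\wt{\cS^n_\br}$. Pulling back $\iota_t$ and its framing by $\Phi$ yields a coherent $1$-parameter family $i_t$ of framed neat immersions of $\cL^n(D_\br)$ (coherence of the pulled-back framing is immediate from coherence of $\iota_t$ and the functoriality of $\Phi$). After a small generic perturbation, the immersions become embeddings, giving the desired framed embeddings at the endpoints that agree, up to perturbation, with $i_0$ and $i_1$.

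Finally I would conclude by invoking the invariance of the Cohen--Jones--Segal realisation under $1$-parameter families of framed embeddings. Specifically, applying \cite[Lemma~3.25]{LipSarKhov} along the family $i_t$ produces a homeomorphism $|(\cL^n(D_\br), i_0)| \cong |(\cL^n(D_\br), i_1)|$, and passing to the stable range (compensating for any stabilisation introduced by the perturbation above, via \cite[Lemma~3.26]{LipSarKhov}) gives the stable homotopy equivalence asserted. The main obstacle in making this precise is simply keeping track of the obstruction classes under pullback along the disc cover $\Phi$; but since the cover factors through $\wt{\cS^n_\br}$ and every moduli space of $\cL^n(D_\br)$ is a disc, the obstruction groups upstairs are pulled back from those downstairs, which have already been shown to vanish.
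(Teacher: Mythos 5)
Your proposal is correct and takes essentially the same route as the paper, which simply cites Proposition \ref{prop:obstruction_class_2} together with the vanishing of $H^{k+1}(\cw(\wt{\cS^n_\br}),\pi_k(O))$ guaranteed by the contractibility of $\cw(\wt{\cS^n_\br})$ (Lemma \ref{lem:sockobscomptriv} via Proposition \ref{prop:obstruct_vanishes}). You have merely unpacked the steps the paper leaves implicit---the existence of the interpolating unframed family, the lift through the disc cover, and the appeal to CJS invariance under a $1$-parameter family of framed embeddings.
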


\begin{proof}
	This follows from Proposition \ref{prop:obstruction_class_2}, since the obstruction class there described automatically vanishes by Lemma \ref{lem:sockobscomptriv}.
\end{proof}

We wish also to check invariance of the stable homotopy type under the choice of sign assignment for the sock flow category $\wt{\cS^n_\br}$.  The proof of this result is very similar to that of part (4) of Proposition 6.1 of \cite{LipSarKhov}, so we allow ourselves to be brief.

\begin{proposition}
	\label{prop:ind_of_sign_assignment}
	Given two sign assignments $s_0$ and $s_1$ of the sock flow category $\wt{\cS^n_\br}$ we extend each to two framed embeddings, giving rise to two framed embeddings of $\cL^n(D_\br)$.  Then we have that $| (\cL^n(D_\br), s_0) |$ and  $|(\cL^n(D_\br), s_1)|$ are stably homotopy equivalent.
\end{proposition}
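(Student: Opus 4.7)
The plan is to reduce the question to the obstruction-theoretic framework already developed in this section, by using the contractibility of $\cw(\wt{\cS^n_\br})$ established in Lemma \ref{lem:sockobscomptriv} to argue that any two sign assignments differ by a coboundary, and then realising that coboundary as an isotopy of framed embeddings at the level of $\wt{\cS^n_\br}$.

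First I would observe that $s_0, s_1 \in C^1(\cw(\wt{\cS^n_\br}), \F_2)$ are both $1$-cocycles, since each extends to a genuine coherent framing of the $1$-dimensional full-dimensional moduli space components (the cocycle condition expresses precisely the fact that signs multiply correctly around the boundary of each such $1$-dimensional component). Their difference $s_1 - s_0$ is therefore a cocycle, and by Lemma \ref{lem:sockobscomptriv} together with the product decomposition $\cw(\wt{\cS^n_\br}) = \prod_i \cw(\wt{\cS^n_{r_i}})$ established in the proof of Proposition \ref{prop:obstruct_vanishes}, we have $H^1(\cw(\wt{\cS^n_\br}),\F_2)=0$. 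Hence there exists $\tau \in C^0(\cw(\wt{\cS^n_\br}),\F_2)$ with $\delta \tau = s_1 - s_0$.

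Next I would use $\tau$ to build a $1$-parameter family of framed embeddings of $\wt{\cS^n_\br}$ interpolating between $\iota_0$ (realising $s_0$) and $\iota_1$ (realising $s_1$). Concretely, for every object $x$ of $\wt{\cS^n_\br}$ with $\tau(x) = 1$, one modifies $\iota_0$ by negating one chosen coordinate in a small neighbourhood of the image of each $0$-dimensional moduli space incident to $x$; this flips precisely the sign of every edge incident to the $0$-cell corresponding to $x$, i.e.\ adds $\delta\tau(x)$ to each such edge, producing a framed embedding with sign assignment $s_1$. The coordinate-flip is implemented by a path in $O$, so gives a $1$-parameter family of embeddings $\iota_t$ with $\iota_0$ and $\iota_1$ as endpoints, coherently framed on the $0$-dimensional moduli spaces throughout.

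Having produced such a family, I would then apply Proposition \ref{prop:obstruction_class_2} inductively on dimension to extend the coherent framing over all higher-dimensional full-dimensional moduli space components of $\iota_t$: the obstructions live in $H^{k+1}(\cw(\wt{\cS^n_\br}), \pi_k(O))$, all of which vanish by Lemma \ref{lem:sockobscomptriv}. This yields an isotopy of framed embeddings of $\wt{\cS^n_\br}$. Lifting through the disc cover $\Phi: \cL^n(D_\br) \to \wt{\cS^n_\br}$ gives an isotopy of framed embeddings of $\cL^n(D_\br)$, and the usual Cohen--Jones--Segal argument (as in the proof that the realisation is independent of the choice of framed embedding, cited from \cite[Lemma 3.25]{LipSarKhov}) shows that the resulting CW complexes are stably homotopy equivalent.

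The main obstacle, and the step requiring care, is the local construction by which the $0$-cochain $\tau$ is realised as an honest isotopy of framed embeddings at the level of $\wt{\cS^n_\br}$: one must verify that the coordinate-flip near each object can be performed compatibly across all moduli spaces incident to that object without destroying the neatness of the embedding or coherence with the remaining already-fixed framings, and that the resulting $\iota_t$ genuinely has the required sign assignment at $t=1$. Once this local model is in place, the rest of the argument is formal obstruction theory and precisely mirrors part (4) of Proposition 6.1 of \cite{LipSarKhov}.
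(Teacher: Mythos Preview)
Your approach differs from the paper's and contains a genuine gap at exactly the step you flag as ``the main obstacle''.

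The paper does not use the obstruction theory of this section for this proposition. Following part~(4) of Proposition~6.1 of \cite{LipSarKhov} (whose proof is the same auxiliary-diagram trick, not an obstruction-theoretic argument), it forms the disjoint union $D'_{\br'}$ of $D_\br$ with a $1$-crossing unknot diagram. Then $\cL^n(D_\br)$ appears inside a suitable full subcategory $\cC\subset\cL^n(D'_{\br'})$ twice, once as an upward-closed and once as a downward-closed subcategory, and one checks that a single sign assignment on $\wt{\cS^n_{\br'}}$ can be chosen restricting to $s_0$ and $s_1$ on the two copies. Since the cochain complex of $\cC$ is acyclic, $|(\cC,s)|$ is contractible, and the cofibre sequence yields the equivalence.

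The gap in your argument is the assertion that the coordinate-flip ``is implemented by a path in $O$''. A framing of a point in a $0$-dimensional moduli space is an element of $O(d)$, and its sign is precisely the connected component in which it lies. Since $\pi_0(O)=\Z/2$, no continuous path of framings connects a positively framed point to a negatively framed one, so there is simply no $1$-parameter family of \emph{framed} embeddings of $\wt{\cS^n_\br}$ joining embeddings with distinct sign assignments --- this is exactly why Proposition~\ref{prop:obstruction_class_2} is stated only under the hypothesis that $\iota_0$ and $\iota_1$ carry identical sign assignments. Your first paragraph (that $s_1-s_0=\delta\tau$ for some $\tau\in C^0$) is correct and is the natural starting point for a direct argument, but realising $\delta\tau$ geometrically requires something other than an isotopy of framed embeddings: for instance, one can reflect the cell $\cC(x)$ in the Cohen--Jones--Segal realisation for each $x$ with $\tau(x)=1$ and verify compatibility with all higher-dimensional attaching data, or one can first stabilise the embedding and use the extra coordinate to rotate between the two components of $O(d)$ inside $O(d+1)$. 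Neither of these is what you wrote, and each needs its own justification.
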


\begin{proof}
	If $\br = (b_1, \ldots, b_s)$ let $\br' = (b_1, \ldots, b_s, 1)$ and $D'_{\br'}$ be the disjoint union of $D_\br$ with the 1-crossing diagram of the unknot chosen so that the handle addition corresponding to the last coordinate of $\br'$ joins two circles into one.
	
	Now $\wt{\cS^n_\br}$ is a full subcategory of $\wt{\cS^n_{\br'}}$ twice: once by restricting to all objects with last coordinate 0, and once by restricting to last coordinate 1 (and so once as an upward-closed and once as a downward-closed subcategory).  It is straightforward to check that there exist exactly two choices of sign assignment on $\wt{\cS^n_{\br'}}$ such that the induced sign assignments on the subcategories are $s_0$ and $s_1$.  Let $s$ be one of these choices.
	
	Let now $\cC$ be the full subcategory of $\cL^n(D'_{\br'})$ containing those objects in which each circle arising from a resolution of the 1-crossing diagram of the unknot is decorated with $1$.  In the obvious way, we see that $\cL^n(D_\br)$ appears as both an upward-closed and as a downward-closed subcategory of $\cL^n(D'_{\br'})$ and that the sign assignments induced by $s$ on these copies of $\cL^n(D_\br)$ are $s_0$ and $s_1$.
	
	It is clear that the cohomology of $|(\cC, s)|$ is trivial so that $|(\cC,s)|$ is contractible.  Hence it follows that $| (\cL^n(D_\br), s_0) |$ and  $|(\cL^n(D_\br), s_1)|$ are stably homotopy equivalent.
\end{proof}

We verify invariance of the stable homotopy type $|\cL^n(D_\br)|$ under various changes of input data in the next couple of subsections.  We include here the proof of invariance under the choice of left and right endpoints of an elementary tangle summand of $D_\br$.

\begin{proposition}
	\label{prop:left_and_right_invariance}
	The stable homotopy type $|\cL^n(D_\br)|$ is independent of the choice of left and right endpoints of the elementary summands of $D_\br$.
\end{proposition}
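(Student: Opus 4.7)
The plan is to reduce to swapping $L$ and $R$ at a single elementary tangle summand of $D_\br$, the general case following by iterating. First I would observe that the \emph{unframed} flow category $\cL^n(D_\br)$ is unchanged by such a swap. Inspecting Definition~\ref{def:partorder}, the labelling conditions in cases (1a), (1b), (2a) and (2b) do not mention $L$ or $R$, while (1c) is a symmetric ``either\dots or\dots'' clause and (1d) imposes the symmetric multiplicative relation $y(C_L)y(C_R)=x^{n-1}z(C_L)z(C_R)$. Hence the set of objects and the partial order are intrinsic to the diagram, and with them the moduli spaces of $\cL^n(D_\br)$ constructed via Proposition~\ref{prop:disc_cover} and the ladybug choices. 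The only data affected by the swap is the disc-cover functor $\Phi:\cL^n(D_\br)\to\cS^n_\br$.

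Second, I would analyse the change to $\Phi$ on $0$-dimensional moduli spaces via Definition~\ref{defn:Phi_0dim}. In case (1c) the swap interchanges the two alternatives, so $\Phi$ gets composed with the transposition $P\leftrightarrow M$. In case (1d), if $y(C_L)=x^k z(C_L)$, then the multiplicative relation forces $y(C_R)=x^{n-1-k}z(C_R)$; consequently, the point previously sent to $P_{k+1}$ is sent by the new functor $\Phi'$ to $P_{n-k}$. Hence $\Phi'=\tau\circ\Phi$, where $\tau$ is the fixed involution of $\cS^n_{r_j}$ (in the coordinate of the swapped tangle) acting on $0$-dimensional moduli spaces by $P\leftrightarrow M$ and $P_j\leftrightarrow P_{n+1-j}$.

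Third, I would realise $\tau$ as an honest automorphism of the partial sock flow category $\wt{\cS^n_{r_j}}$. The natural candidate is the involution induced by complex conjugation $z\mapsto \bar z$ on $S^{2N+1}\subset\C^{N+1}$: it commutes with the $\Z/n$-action defining the lens space, preserves $g_n(z)=z^n+\bar z^n$, fixes each critical point $p_i$ of the induced Morse function $g$, and reflects the $n$-point moduli spaces $\M(p_{2j+1},p_{2j})$ by $P_j\leftrightarrow P_{n+1-j}$, while reversing the intervals of $\M(p_{2j},p_{2j-1})$. Crucially, it fixes setwise the discarded intervals and squares in the passage to $\wt{\cS^n_r}$ (Definition~\ref{defn:partial_sock}), so it descends to $\wt{\cS^n_{r_j}}$. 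Writing $\iota$ for the framed embedding of $\wt{\cS^n_\br}$ used to build $|\cL^n(D_\br)|$ under the original $L,R$ convention, the framed flow category under the new convention is the pullback of $\iota$ under $\Phi'=\tau\circ\Phi$, equivalently the pullback of $\iota\circ\tau$ under $\Phi$. Proposition~\ref{prop:ind_of_sign_assignment} ensures independence of the sign assignment on $\wt{\cS^n_\br}$ and Theorem~\ref{thm:sock_frame_doesnt_matter} ensures independence of the framed embedding extending a given sign assignment; combining these two invariance statements yields the required stable homotopy equivalence.

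The main obstacle will be verifying that $\tau$ extends consistently to the higher-dimensional full-dimensional moduli spaces of $\wt{\cS^n_{r_j}}$ in a manner compatible with the ladybug matchings used to lift to $\cL^n(D_\br)$. The product decomposition $\cw(\wt{\cS^n_\br})=\prod_i\cw(\wt{\cS^n_{r_i}})$ reduces this to a single factor, and the contractibility of $\cw(\wt{\cS^n_r})$ proved in Lemma~\ref{lem:sockobscomptriv} guarantees that any residual framing discrepancy between $\iota$ and $\iota\circ\tau$ lives in a trivial obstruction group (Propositions~\ref{prop:obstruction_class} and~\ref{prop:obstruction_class_2}) and hence can be absorbed without changing the stable homotopy type.
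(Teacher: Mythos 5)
Your proof is correct and follows essentially the same route as the paper: you identify the same automorphism $\Phi_i = \tau$ of $\wt{\cS^n_\br}$ (acting by $P\leftrightarrow M$ and $P_j\leftrightarrow P_{n+1-j}$ in the $i$th coordinate), observe that the covering morphism for the swapped endpoints factors through it, and conclude from Theorem \ref{thm:sock_frame_doesnt_matter} and Proposition \ref{prop:ind_of_sign_assignment}. The explicit realisation of $\tau$ via complex conjugation is an extra detail the paper leaves implicit (it simply asserts the existence of the isomorphism), and the only blemish in your write-up is a swap of notation between the two- and $n$-point moduli spaces $\M(p_{2j+1},p_{2j})$ and $\M(p_{2j},p_{2j-1})$, which does not affect the argument.
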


\begin{proof}
	Suppose $\br = (b_1, \ldots, b_s)$ and let $1 \leq i \leq s$.  The result follows from the observation that there exists an isomorphism of partial flow categories
	\[ \Phi_i : \wt{\cS^n_\br} \rightarrow  \wt{\cS^n_\br} \]
	\noindent with the following two properties.  Firstly $\Phi_i$ is the identity on objects and the identity on $0$-dimensional moduli spaces corresponding to a shift of a coordinate different from the $i$th.  Secondly $\Phi_i$ acts in the following way on all $0$-dimensional moduli spaces corresponding to a shift in the $i$th coordinate:
	\[ \Phi_i : P \mapsto M, M \mapsto P, P_j \mapsto P_{n+1-j} \]
	\noindent for $j = 1, \ldots, n$.
	
	Finally note that precomposing the covering morphism $\cL^n(D_\br) \rightarrow \wt{\cS^n_\br}$ with $\Phi_i$ gives exactly the covering corresponding to exchanging the left and right endpoints on the $i$th elementary tangle summand of $D_\br$.
\end{proof}

\subsection{Equivalence to the Lipshitz-Sarkar stable homotopy type for $n=2$}
\label{subsec:easier_induction}
Suppose that we are given a glued diagram $D_{\br}$ consisting of $m$ elementary tangles of indices $\br = (r_1, \ldots, r_m)$ (see Figure \ref{2twist}).  We can apply the construction of Subsection \ref{subsec:matched_category} to $D_\br$ and produce a framed flow category $\cL^n(D_\br)$.  This then produces via the construction of Cohen-Jones-Segal a stable homotopy type $|\cL^n(D_\br)|$.

The relation of the stable homotopy type $|\cL^2(D_\br)|$ to the Lipshitz-Sarkar stable homotopy type will be seen to be given by cohomological and quantum shifts.  Let us be more specific about these shifts here.  We have already defined the cohomological degree of the objects in a matched flow category associated to a diagram, it remains to give the quantum degree.  Since the quantum degree is not invariant under the isomorphism between Khovanov cohomology and $\SL_2$ Khovanov-Rozansky cohomology, we shall treat these cases differently.

Let therefore $\cL^{\Kh} (D_\br)$ be a copy of $\cL^2(D_\br)$.  Consider the object of $\cL^{\Kh}(D_\br)$ sitting in multidegree $(r_1, \ldots, r_m)$ obtained by decorating all the circles in the corresponding resolution by $1$.  We set the quantum degree of this object in $\cL^{\Kh}(D_\br)$ to be
\[ R - \sum_{i=1}^m {\rm sgn}(r_i) + w(D) + \sum_{i=1}^m s_i r_i + c{\rm ,}\]

\noindent where $R := r_1 + \cdots r_m$, $w(D)$ is the writhe of the diagram $D$, $s_i$ is either $1$ or $0$ depending on whether the strands of the $i$th tangle carry the orientation of a 2-braid or not, and $c$ is the number of circles in the resolution.  The quantum gradings of all other objects can then be determined by the requirements that the differential preserves the quantum grading and that multiplication by $x$ on any resolution circle shifts the quantum grading by $-2$.

We can break $D_\br$ into $|r_1| + \cdots +|r_m|$ tangles, where the $i$th elementary tangle becomes $|r_i|$ tangles, each of index $r_i/|r_i|$.  We write this $D_{(\pm 1,\ldots, \pm 1)}$.  Restricting now to $n=2$, the construction of Subsection \ref{subsec:matched_category} assigns to $D_{(\pm 1,\ldots, \pm 1)}$ the framed flow category $\cL^{\Kh}(D_{(\pm 1,\ldots, \pm 1)})$, which clearly agrees with the framed flow category assigned to the underlying diagram $D$ by the Lipshitz-Sarkar construction up to overall shifts.  More specifically, denoting a cohomological shift in the square parentheses,
\[ \cL^{\Kh}(D_{(\pm 1,\ldots, \pm 1)}) \left[ \frac{w(D) - R}{2} \right] \]

\noindent is seen to be the bigraded framed flow category associated to the diagram $D$ by the Lipshitz-Sarkar construction.

\begin{definition}
We shall write $\X^{\Kh}(D_\br)$ for the stable homotopy type associated to the framed flow category $\cL^{\Kh}(D_\br)$ by the Cohen-Jones-Segal construction.  Note that the cohomology of $\X^{\Kh}(D_\br)$ is bigraded by the quantum degree and the underlying cohomological degree of the objects of $\cL^{\Kh}(D_\br)$.
\end{definition}

We shall shortly prove the following theorem

\begin{theorem}
	\label{thm:main_knot_theorem}
	We have
	\[ \X^{\Kh}(D_\br) \simeq \X^{\Kh}(D_{(\pm 1,\ldots, \pm 1)}) {\rm .} \]
\end{theorem}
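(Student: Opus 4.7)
The plan is to induct on the total quantity $\sum_{i=1}^m (|r_i| - 1)$, the root case $\sum |r_i| = m$ (i.e.\ each $|r_i| = 1$) being a tautology. For the inductive step it is enough to show that if $|r_j| \geq 2$, splitting the $j$th elementary tangle of $D_\br$ into two consecutive tangles of indices $r_j', r_j''$ with $r_j' + r_j'' = r_j$ and $\sgn(r_j') = \sgn(r_j'') = \sgn(r_j)$ produces a stably equivalent space. Call this refined decomposition $\br'$. Since the construction of $\cL^{\Kh}$ lifts from the sock flow category $\wt{\cS^2_\br}$ via the disc cover $\Phi$, and since the change from $\br$ to $\br'$ is local in the $j$th coordinate, the problem reduces to a statement purely about the local factor: namely that swapping $\cS^2_{r_j}$ for $\cS^2_{(r_j', r_j'')}$ in the Morse product description of $\wt{\cS^2_\br}$ yields equivalent framed lifts.

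The key step is to exhibit, for fixed $n = 2$, a framed disc-cover identification between the two resulting framed flow categories $\cL^{\Kh}(D_\br)$ and $\cL^{\Kh}(D_{\br'})$. I would proceed by identifying $\Ob(\cL^{\Kh}(D_\br))$ with a subset of $\Ob(\cL^{\Kh}(D_{\br'}))$, namely the labelled weighted glued configurations in which the new extra arc (coming from the split of the $j$th tangle) carries weight $(r_j'', 0)$ and its endpoint circles are decorated compatibly with the corresponding configuration of $D_\br$. The complementary objects of $\cL^{\Kh}(D_{\br'})$ come in pairs and a third triple, and I would verify directly from Definition \ref{def:partorder} (using $n = 2$, so that $x^{n-1} = x$) that these complementary objects together with the edges joining them assemble into a subcategory whose induced subcomplex of $|\cL^{\Kh}(D_{\br'})|$ is contractible. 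Collapsing this contractible subcomplex identifies $|\cL^{\Kh}(D_{\br'})|$ stably with $|\cL^{\Kh}(D_\br)|$. At the algebraic level this collapse is exactly the Gauss elimination of the Bar-Natan simplification of the $SL_2$ invariant for open 2-braids referred to in the abstract.

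To lift the algebraic collapse to the level of framed flow categories, I would use the obstruction theory developed in Subsection \ref{subsec:obstruction_theory}. The two framed embeddings that we wish to compare arise as lifts of two framed embeddings of the partial sock flow categories $\wt{\cS^2_\br}$ and $\wt{\cS^2_{\br'}}$, and the inclusion of the former into the latter extends to a morphism of the associated obstruction complexes. By Proposition \ref{prop:obstruct_vanishes}, $\cw(\wt{\cS^2_{\br'}})$ is contractible and in particular retracts onto $\cw(\wt{\cS^2_\br})$. Combining Theorem \ref{thm:sock_frame_doesnt_matter} and Proposition \ref{prop:ind_of_sign_assignment} then guarantees that any two such choices of framed lift give stably equivalent realisations, so the local collapse is well defined up to stable equivalence.

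The main obstacle I expect is bookkeeping: one must check that under the proposed identification, the ladybug matching in $\cL^{\Kh}(D_{\br'})$ between the complementary objects is compatible with the identification of the surviving objects with those of $\cL^{\Kh}(D_\br)$, so that the cell attachments really do collapse cleanly. Once this combinatorial matching is established, the stable homotopy equivalence follows from the fact that the collapsed subcomplex is a wedge of acyclic mapping cones, itself contractible. Iterating the local move along each coordinate of $\br$ until all indices are $\pm 1$ then gives the desired equivalence with $\X^{\Kh}(D_{(\pm 1,\ldots,\pm 1)})$.
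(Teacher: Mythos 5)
Your high-level strategy (induct on the quantity $\sum_i(|r_i|-1)$ by splitting one elementary tangle into two and showing the local move preserves the stable homotopy type) is the same reduction the paper uses: Theorem \ref{thm:main_knot_theorem} is derived from Proposition \ref{prop:all_discs_consequence}, which is exactly your local splitting claim. However, at the point where you need to prove the local claim, you take what the paper calls the ``direct'' Gauss-elimination route and the paper takes a different one. The paper explicitly warns that the direct route ``does work, but the argument needed to show that the framed flow category resulting from the cancellation is indeed $\cL^{\Kh}(D_\br)$ (with a framing induced as a cover of the sock flow category $\cS_\br$) turns out to be rather long.'' Instead the paper introduces an intermediate diagram via Figure \ref{fig:induction_twist}, replacing the tangle of index $r_1+1$ with three tangles of indices $(1,-1,r_1+1)$, and reduces everything to a single Reidemeister-II-type statement (Proposition \ref{prop:Reidemeister_II_trick}) where the Gauss elimination only involves a $(-1)$-tangle. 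There the surviving subcategory is visibly $\cL^{\Kh}(D_\bs)$ with the correct disc-cover framing; this is precisely the step your approach does not make visible.

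Beyond the choice of route, there are concrete gaps in your sketch. The claimed inclusion of $\Ob(\cL^{\Kh}(D_\br))$ into $\Ob(\cL^{\Kh}(D_{\br'}))$ via ``the new extra arc \ldots carries weight $(r_j'',0)$'' is not well-formed: Definition \ref{def:weight_match_config} requires $0<s\leq r$ or $r\leq s<0$, so $s=0$ is not a weight (it is the \emph{absence} of an arc), and in any case such an identification would only account for objects of $\cL^{\Kh}(D_\br)$ whose $j$th cohomological coordinate lies between $0$ and $r_j'$, missing the range $r_j'<a_j\leq r_j$. The correct picture is not an object-set inclusion but successive removal of contractible upward- and downward-closed subcategories, and it is precisely the identification of the survivor with $\cL^{\Kh}(D_\br)$, \emph{as a framed disc cover of $\cS^2_\br$}, that requires care. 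Your appeal to Proposition \ref{prop:obstruct_vanishes}, Theorem \ref{thm:sock_frame_doesnt_matter} and Proposition \ref{prop:ind_of_sign_assignment} does not close that gap: those results compare different framed embeddings or sign assignments of a \emph{fixed} sock category, and contractibility of both $\cw(\wt{\cS^2_\br})$ and $\cw(\wt{\cS^2_{\br'}})$ does not by itself give a retraction of one onto the other or transport a framing across the Gauss elimination. You would need to exhibit directly (as the paper does in the proof of Proposition \ref{prop:Reidemeister_II_trick}) that the induced framing on the residual subcategory is a framing as a disc cover of $\cS_\br$, rather than conclude it from the general obstruction-theoretic vanishing.
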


This theorem holds in the bigraded sense, and it clearly implies Theorem \ref{thm:Lip-Sar_equivalence} stated in the introduction.

Suppose that $D$ is a knot diagram admitting a decomposition into $m$ elementary tangles of indices $\br = (r_1 + 1, r_2, \ldots, r_m)$ where we assume that $r_1 \geq 1$.  We write the diagram together with the data of the decomposition as $D_\br$.  By breaking the first elementary tangle $T_{r_1 + 1}$ into the concatenation of $T_1$ and $T_{r_1}$, we obtain a new decomposition of $D$ into tangles of indices $\wt{\br} = (1, r_1, r_2, \ldots, r_m)$, which we write as $D_{\wt{\br}}$.  Assume that $D$ is oriented.

\begin{proposition}
	\label{prop:all_discs_consequence}
	We have
	\[ \X^{\Kh}(D_{\wt{\br}}) \simeq \X^{\Kh}(D_\br) {\rm .} \]
\end{proposition}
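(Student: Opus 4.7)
The plan is to exhibit $\cL^{\Kh}(D_{\wt{\br}})$ as built from $\cL^{\Kh}(D_\br)$ by adjoining a full subcategory with contractible Cohen-Jones-Segal realization, so that the resulting stable homotopy types agree. This mirrors the strategy of Proposition \ref{prop:ind_of_sign_assignment}, which handles the analogous invariance under change of sign assignment.

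First, I would stratify the objects of $\cL^{\Kh}(D_{\wt{\br}})$ according to the resolution chosen at the newly inserted single-crossing tangle $T_1$ and the adjacent tangle $T_{r_1}$. At these two positions the weighted glued configurations form a set of size $2(r_1+1)$: the configurations $(\mathrm{H},\mathrm{H})$, $(\mathrm{H},\mathrm{V}_s)$ for $s = 1,\ldots,r_1$, $(\mathrm{V},\mathrm{H})$, and $(\mathrm{V},\mathrm{V}_s)$ for $s = 1,\ldots,r_1$. By contrast, the combined tangle $T_{r_1+1}$ in $D_\br$ yields only $r_1 + 2$ configurations $\mathrm{H}, \mathrm{V}_1, \ldots, \mathrm{V}_{r_1+1}$ at the corresponding position. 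The natural bijection sends $(\mathrm{H},\mathrm{H}) \mapsto \mathrm{H}$, $(\mathrm{H},\mathrm{V}_s) \mapsto \mathrm{V}_s$ for $s \leq r_1$, and $(\mathrm{V},\mathrm{V}_{r_1}) \mapsto \mathrm{V}_{r_1+1}$; the remaining $r_1$ configurations $(\mathrm{V},\mathrm{H}), (\mathrm{V},\mathrm{V}_1), \ldots, (\mathrm{V},\mathrm{V}_{r_1-1})$ must be shown to contribute a contractible piece.

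Second, I would assemble these remaining $r_1$ configurations together with their image partners under differentials of magnitude one into a full subcategory $\mathcal{A} \subseteq \cL^{\Kh}(D_{\wt{\br}})$, and verify that $\mathcal{A}$ is either upward- or downward-closed with contractible realization $|\mathcal{A}|$. This acyclic cancellation is precisely the Gaussian elimination that, at the chain level, takes the cube-of-resolutions complex for the two-crossing concatenation to the simplified $2$-braid complex of Bar-Natan. The contractibility of $|\mathcal{A}|$ would follow from an explicit cell-by-cell retraction analogous to the one used in the proof of Lemma \ref{lem:sockobscomptriv}, where the cells of $\cw(\wt{\cS^n_r})$ are contracted pair by pair along consecutive weight differentials.

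Third, the remaining task is to identify the complement of $\mathcal{A}$ as a framed flow category with $\cL^{\Kh}(D_\br)$ itself. The bijection of objects is given above; one must then verify that the induced moduli space data and framings agree under the identification. Compatibility of the underlying disc cover structures should follow from the fact that both $\cL^{\Kh}(D_\br)$ and the complement of $\mathcal{A}$ are disc covers of the partial sock flow categories $\wt{\cS^2_\br}$ and (a suitable retract of) $\wt{\cS^2_{\wt{\br}}}$, together with an explicit comparison at the level of these sock flow categories. The main obstacle I anticipate is the verification that framings induced from two compatible sign assignments really do give stably equivalent realizations under this identification; this can be reduced via Proposition \ref{prop:obstruction_class_2} to the vanishing of obstruction classes valued in the cohomology of $\cw(\wt{\cS^2_{\wt{\br}}})$, which is contractible by Proposition \ref{prop:obstruct_vanishes}, so that no obstruction can arise and the equivalence is automatic.
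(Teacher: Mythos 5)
Your proposal is essentially the ``direct approach'' that the paper explicitly considers and deliberately abandons, observing that it ``does work, but the argument needed to show that the framed flow category resulting from the cancellation is indeed $\cL^{\Kh}(D_\br)$ (with a framing induced as a cover of the sock flow category $\cS_{\br}$) turns out to be rather long.'' The paper instead routes through a \emph{third} tangle decomposition of indices $(1,-1,r_1+1,r_2,\ldots,r_m)$ (Figure \ref{fig:induction_twist}), reducing everything to Proposition \ref{prop:Reidemeister_II_trick}, an extended Reidemeister~II cancellation. The advantage of that trick is that the acyclic pieces peel off as genuine upward- and then downward-closed subcategories, after which the surviving full subcategory $\cC_2$ \emph{literally coincides} with $\cL^{\Kh}(D_\bs)$, together with its framing as a disc cover of $\cS_\bs$; no identification of moduli spaces or framings needs to be done after the fact.

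There is a concrete gap in your proposed execution. The complement you single out---the configurations $(\mathrm{H},\mathrm{H})$, $(\mathrm{H},\mathrm{V}_s)$ for $s\leq r_1$, and $(\mathrm{V},\mathrm{V}_{r_1})$, i.e.\ $(i_0,i_1)\in\{(0,0),(0,1),\ldots,(0,r_1),(1,r_1)\}$---is neither upward- nor downward-closed inside $\cL^{\Kh}(D_{\wt{\br}})$. For example $(1,0)$ lies strictly between $(0,0)$ and $(1,r_1)$ in the partial order, so omitting $(1,0)$ breaks the flow category boundary axiom $\partial\M(b,a)\cong\coprod_z\M(z,a)\times\M(b,z)$ for the complement; the complement does not inherit a flow category structure at all, let alone one isomorphic to $\cL^{\Kh}(D_\br)$ as a framed cover of $\cS^2_\br$. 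Symmetrically, no choice of ``image partners'' makes the set $\mathcal{A}$ both closed and with complement of the right size: including all of $(1,0),\ldots,(1,r_1)$ is upward-closed and acyclic, but then you also delete $(1,r_1)$ and are left with $r_1+1$ configurations instead of $r_1+2$. What is really needed is handle cancellation (not deletion of a closed subcategory), which modifies the moduli spaces of the survivors; this is exactly what makes the direct route long. Your final appeal to Proposition \ref{prop:obstruction_class_2} also does not address the actual issue: that result compares two framings of one and the same partial sock category, whereas here you must compare a framed cover of $\wt{\cS^2_\br}$ with an a-priori-unrelated framed object produced by cancellation inside a cover of $\wt{\cS^2_{\wt{\br}}}$.
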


Together with its obvious sister theorem covering the case $r_1 < 0$ (we shall not discuss the proof of this negative case as it follows the proof of the positive case very closely) Proposition \ref{prop:all_discs_consequence} certainly implies Theorems \ref{thm:main_knot_theorem} and \ref{thm:Lip-Sar_equivalence}.

\begin{figure}
	\centerline{
		{
			\psfrag{ldots}{$\ldots$}
			\psfrag{L}{$L$}
			\psfrag{R}{$R$}
			\psfrag{n+1}{$r_1 + 1$}
			\psfrag{n}{$r_1$}
			\psfrag{1}{$1$}
			\psfrag{-1}{$-1$}
			\includegraphics[height=2in,width=3.5in]{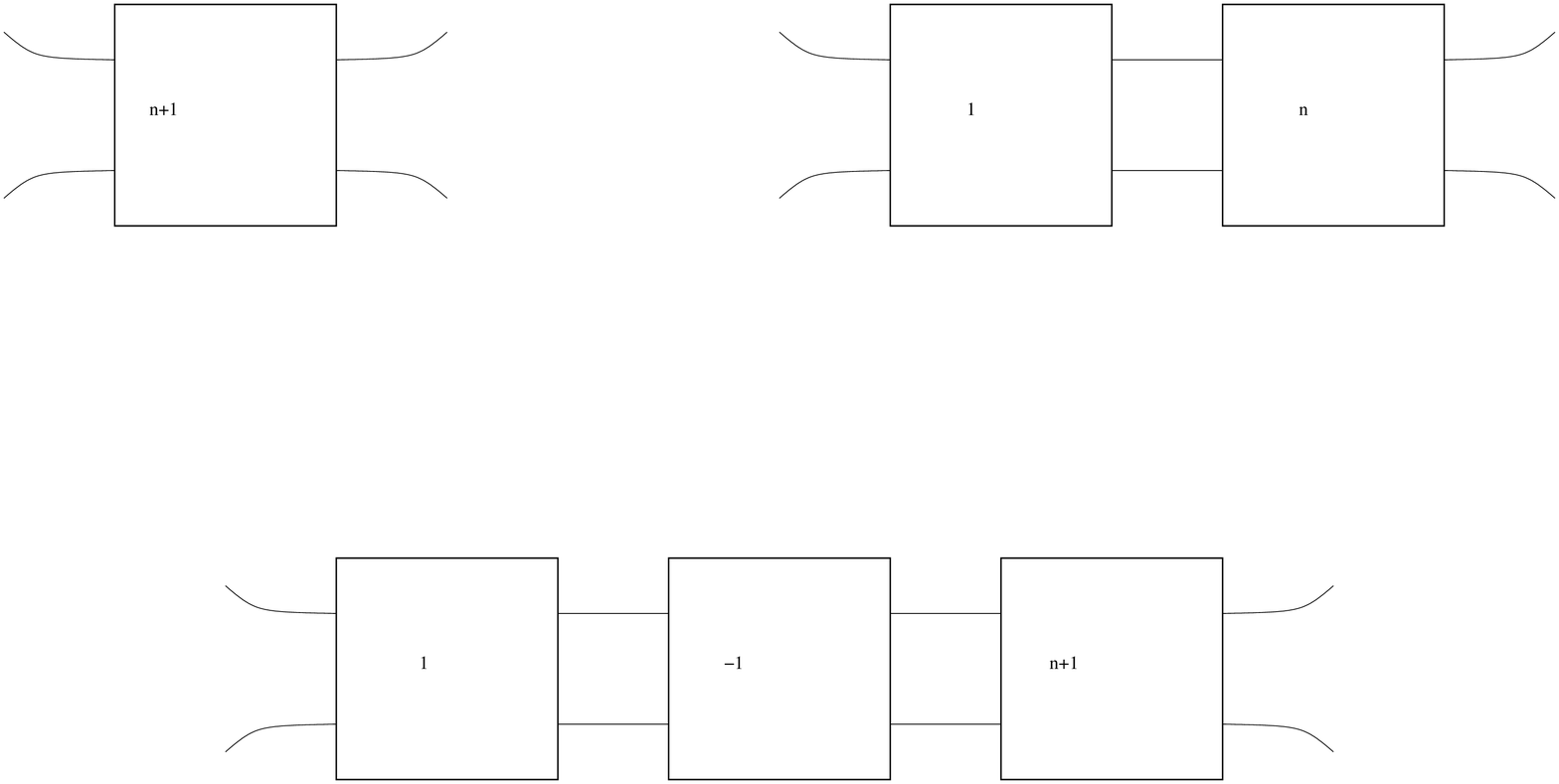}
		}}
		\caption{In this diagram the numbered boxes represent tangle diagrams consisting of the given number of horizontal positive half-twists (negative numbers mean negative half-twists).  Clearly all three tangles are isotopic.}
		\label{fig:induction_twist}
	\end{figure}

We know of two approaches to proving Proposition \ref{prop:all_discs_consequence}.  The first is the most direct: one performs repeated `handle cancellation' on the category $\cL^{\Kh}(D_{\wt{\br}})$ (mirroring the gauss elimination that can be performed at the cochain level) until one arrives at the category 
$\cL^{\Kh}(D_\br)$.  This direct approach does work, but the argument needed to show that the framed flow category resulting from the cancellation is indeed $\cL^{\Kh}(D_\br)$ (with a framing induced as a cover of the sock flow category $\cS_{\br}$) turns out to be rather long.  The second approach avoids these difficulties by beginning with a trick.

Figure \ref{fig:induction_twist} shows three tangle diagrams of the same tangle, representing the flow categories $\cL^{\Kh}(D_{\br})$, $\cL^{\Kh}(D'_{(1, -1, r_1 + 1,  r_2, \ldots, r_m)})$, and $\cL^{\Kh}(D_{\wt{\br}})$.  Rather than show directly that $|\cL^{\Kh}(D_{\wt{\br}})| \simeq  |\cL^{\Kh}(D_\br)|$, the second approach goes via the intermediate flow category.

We suppose now $\wt{D}$ is a knot diagram admitting a decomposition into $m$ elementary tangles of indices $\wt{\bs} = (-1, s_1, s_2, \ldots, s_m)$ where we assume that $s_1 \geq 1$ and we include the data of the decomposition with diagram as $\wt{D}_{\wt{\bs}}$.  We assume that the first tangle of index $-1$ is glued to the left of the second tangle of index $s_1$.  There is as before an isotopic composition of tangles of indices $\bs = (s_1 - 1 , s_2, \ldots, s_m)$ we write as $D_\bs$.
Now Figure \ref{fig:induction_twist} reveals that Proposition \ref{prop:all_discs_consequence} is a consequence of the following proposition.

\begin{proposition}
	\label{prop:Reidemeister_II_trick}
	We have
	\[ \X^{\Kh}(\wt{D}_{\wt{\bs}}) \simeq \X^{\Kh}(D_\bs)
	{\rm .} \]
\end{proposition}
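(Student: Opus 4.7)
\emph{Proof plan.} The strategy is to lift the standard chain-level proof of Reidemeister II invariance for Khovanov cohomology to the framed flow category level, following the Gauss-elimination approach of Lipshitz--Sarkar \cite{LipSarKhov}. Parametrise objects of $\cL^{\Kh}(\wt{D}_{\wt{\bs}})$ by tuples $(a_0, a_1, a_2, \ldots, a_m)$ equipped with compatible labelings of resolution circles, where $a_0 \in \{-1, 0\}$ records the smoothing of the initial $(-1)$-tangle and $a_1 \in \{0, 1, \ldots, s_1\}$ records that of the $s_1$-tangle. At the underlying chain level, $\wt{D}$ and $D$ are related by a Reidemeister II move that cancels the $(-1)$-tangle against the leftmost crossing of the adjacent $s_1$-tangle, and the cochain complex associated to $\wt{D}$ contains an acyclic direct summand whose quotient is that of $D$. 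The question is whether this summand-and-quotient decomposition can be realised at the flow category level.

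To that end I will identify cancellable pairs of objects in $\cL^{\Kh}(\wt{D}_{\wt{\bs}})$. For each $k \in \{1, \ldots, s_1\}$ and each compatible labeling, the objects $(-1, k, a_2, \ldots, a_m)$ and $(0, k, a_2, \ldots, a_m)$ are joined by the $0$-dimensional moduli space coming from the $a_0$-direction in the sock factor $\wt{\cS^2_{-1}}$. When $k \geq 1$ the $s_1$-tangle is already vertically smoothed immediately to the right, so the corresponding surgery is topologically a ``nugatory'' split or merge on a short arc; consequently the contribution to the differential, with an appropriate sign assignment, is a $\pm 1$ isomorphism between the associated summands of the cochain complex. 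These pairs assemble into a full subcategory whose realisation will be shown to be contractible, mirroring the acyclicity of the corresponding RII subcomplex.

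The next step is to check that cancelling these pairs at the flow category level yields a framed flow category isomorphic (up to an overall shift reflecting the writhe change $w(\wt{D}) - w(D) = 0$) to $\cL^{\Kh}(D_{\bs})$. By Proposition \ref{prop:disc_cover}, the entire flow category structure is pinned down by the $0$- and $1$-dimensional moduli spaces together with the sock covering data; it therefore suffices to produce the required isomorphism in low dimensions. Combinatorially, after deleting the cancellable pairs the remaining resolution diagrams in $\wt{D}$ match planar-isotopically with those in $D$, since horizontally smoothing the $(-1)$-tangle ($a_0 = 0$, $a_1 = 0$) produces the same local picture as horizontally smoothing one crossing of the $s_1$-tangle, and vertically smoothing both ($a_0 = -1$, $a_1 \geq 1$) is the cancelled piece. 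The labelings then transport across this correspondence using the decoration rules of Definition \ref{def:partorder}.

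The main obstacle is showing that the Gauss-elimination procedure is valid on the level of \emph{framed} flow categories, and not merely on the underlying cochain complex. This reduces to verifying that the relevant framings on the remaining moduli spaces of $\cL^{\Kh}(\wt{D}_{\wt{\bs}})$ can be chosen to match those of $\cL^{\Kh}(D_{\bs})$ up to isotopy. This in turn is handled by obstruction theory exactly as in Theorem \ref{thm:sock_frame_doesnt_matter}: both sides arise as disc covers of partial sock flow categories, and the required obstructions live in cohomology groups of the complexes $\cw(\wt{\cS^2_{\wt{\bs}}})$ and $\cw(\wt{\cS^2_{\bs}})$, which are contractible by Proposition \ref{prop:obstruct_vanishes}. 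Independence of the sign assignment in turn follows from Proposition \ref{prop:ind_of_sign_assignment}. The combinatorics of the cancellation is intricate but parallels closely the original Lipshitz--Sarkar proof of Reidemeister II invariance.
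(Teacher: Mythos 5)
Your overall strategy — collapse an acyclic piece of $\cL^{\Kh}(\wt{D}_{\wt{\bs}})$ and identify what remains with $\cL^{\Kh}(D_\bs)$ — is the right one, but the specific pairs you propose to cancel do not work. You pair objects at bigrading $(-1,k)$ with those at $(0,k)$ for $k\geq 1$ via the $a_0$-direction merge. This cannot be a bijection: for $k\geq 1$ the resolution at $(-1,k)$ contains an extra small circle (formed by the right arc of the vertically smoothed $(-1)$-tangle and the left arc of the vertically smoothed $s_1$-tangle), so there are twice as many objects at $(-1,k)$ as at $(0,k)$, and the merge is only surjective at the chain level — an object with both the small circle and its neighbour labelled $x_-$ has no $a_0$-morphism at all. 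Already for $s_1=1$ the subcomplex spanned by $\{(-1,1),(0,1)\}$ is the single merge $V\otimes W\to W$, which is not acyclic. So the subcategory you single out is not contractible, and after your "cancellation" only the objects at $(-1,0)$ and $(0,0)$ would survive, far fewer than $\cL^{\Kh}(D_\bs)$ contains. The paper instead performs two asymmetric cancellations mirroring the standard delooping proof of Reidemeister II: first delete the upward-closed subcategory spanned by $Y_1$ (objects at $(-1,s_1)$ with small circle $x_+$) and $X_1$ (all objects at $(0,s_1)$), then delete a downward-closed subcategory consisting of $(-1,0)$, $(-1,i_1)_{x_+}$ for $1\leq i_1\leq s_1-1$, paired against $(-1,i_1)_{x_-}$ for $1\leq i_1\leq s_1$ via $a_1$-direction maps; what survives is exactly the slice $(0,i_1)$ with $0\leq i_1\leq s_1-1$, which is $\cL^{\Kh}(D_\bs)$.

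There is a second, structural issue with your plan. You propose Gauss-eliminating pairs at the framed-flow-category level and then repairing framings via the obstruction theory of Theorem \ref{thm:sock_frame_doesnt_matter} and Propositions \ref{prop:obstruct_vanishes} and \ref{prop:ind_of_sign_assignment}. Those results concern framings of the sock category and isotopies between such framings; they say nothing about how to build a framed flow category after handle cancellation, which would require constructing new moduli spaces. The paper sidesteps this entirely: it only ever deletes upward- and downward-closed subcategories (using Gauss elimination merely at the cochain level to verify that the deleted pieces are contractible), so that the surviving moduli spaces and their framings are literally unchanged, making the identification of the remaining subcategory with $\cL^{\Kh}(D_\bs)$ as a disc cover of $\cS_\bs$ immediate.
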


Before proceeding we examine the situation in more detail and establish some notation.  We start by considering the cochain complex associated to the flow category $\cL^{\Kh}(\wt{D}_{\wt{\bs}})$, as depicted in Figure \ref{fig:complex}.

\begin{figure}
\begin{psfrags}
		\psfrag{+}{$+$}
		\psfrag{-}{$-$}
		\psfrag{pm}{$\pm$}
		\psfrag{mp}{$\mp$}
		\psfrag{ldots}{$\ldots$}
		\begin{center}
			\includegraphics[height=3.2in,width=4.8in]{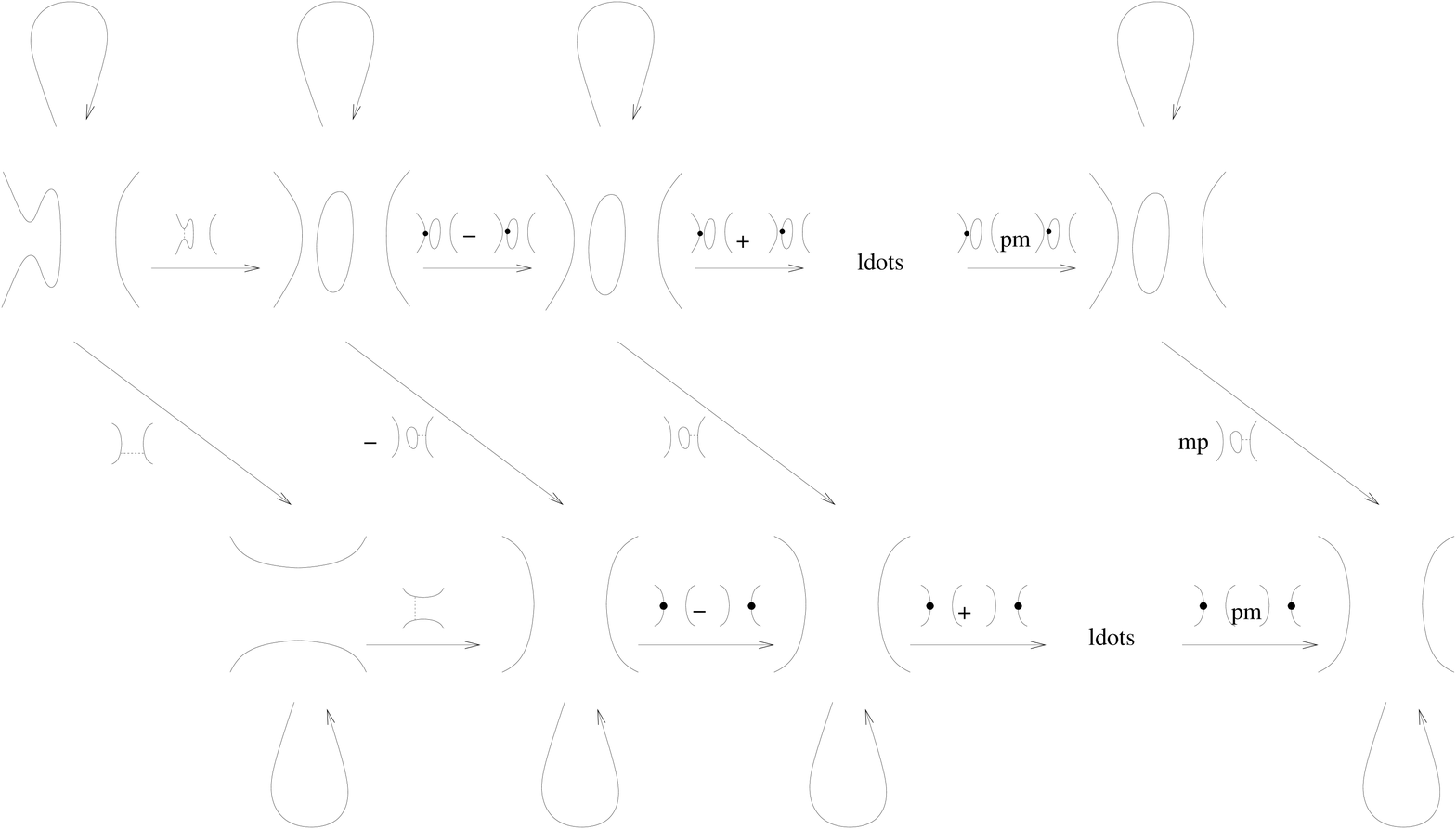}
		\end{center}
	\end{psfrags}
	\caption{The cochain complex associated to the flow category $\cL^{\Kh}(\wt{D}_{\wt{\bs}})$.}
	\label{fig:complex}
\end{figure}

In the cochain complex, each standard generator is given by a choice of vertex (an $(m+1)$-tuple of integers $(i_0, i_1, \ldots, i_m)$, in which the $i_j$ lies between $0$ and $r_m$) and then a decoration of each circle in the smoothing at that vertex with either $x_+$ or $x_-$.  These generators are in one-to-one correspondence with the set $\Ob(\cL^{\Kh}(\wt{D}_{\wt{\bs}}))$, so we shall move freely between the two concepts.

The picture we have drawn of the complex is collapsed to the first two coordinates.  The top left smoothing stands for all corners in which $i_0 = -1$ and $i_1 = 0$, and the bottom right smoothing stands for all corners in which $i_0 = 0$ and $i_1 = s_1$.  Note that we are free to pick the left and right endpoints of the first two elementary tangles by Proposition \ref{prop:left_and_right_invariance}.

\begin{proof}[Proof of Proposition \ref{prop:Reidemeister_II_trick}]

Let the set $Y_1 \subset \Ob(\cL^{\Kh}(\wt{D}_{\wt{\bs}}))$ consist of all objects of bigrading $(i_0, i_1) = (-1,s_1)$ in which the small circle is decorated with an $x_+$.  Let the set $X_1 \subset \Ob(\cL^{\Kh}(\wt{D}_{\wt{\bs}}))$ consist of all objects of bigrading $(i_0,i_1) = (0,s_1)$.

Let the set $Y_2 \subset \Ob(\cL^{\Kh}(\wt{D}_{\wt{\bs}}))$ consist of all objects of bigrading $(i_0, i_1) = (-1,0)$ and all objects of bigrading $(i_0,i_1)$ where $i_0 = -1$ and $1 \leq i_1 \leq s_1 - 1$, in which the small circle is decorated with an $x_+$.  Let the set $X_2 \subset \Ob(\cL^{\Kh}(\wt{D}_{\wt{\bs}}))$ consist of all objects of bigrading $(i_0,i_1)$ where $i_0 = -1$ and $1 \leq i_1 \leq s_1$, in which the small circle is decorated with an $x_-$.

Note that for $i=1,2$ there are maps $x_i: Y_i \rightarrow X_i$ determined by the condition that $\M(x_i(y),y)$ consists of a single point for each $y \in Y_i$, and that these maps are bijections.

Now the group generated by the elements of $X_1 \cup Y_1$ forms a subcomplex $C_1$ of the cochain complex.  Since $C_1$ is generated by elements corresponding to objects of the flow category, the full subcategory corresponding to $C_1$ is an upward closed subcategory.  Furthermore, $C_1$ is contractable, as can be seen by successive Gauss elimination of the pairs $(y, x_1(y))$ for each $y \in Y_1$.  Hence the upwards closed subcategory $\cC_1$ generated as a full subcategory by the objects $\Ob(\cL^{\Kh}(\wt{D}_{\wt{\bs}})) \setminus (X_1 \cup Y_1)$ gives rise to the same stable homotopy type as $\cL^{\Kh}(\wt{D}_{\wt{\bs}})$.

Similarly, $X_2 \cup Y_2$ corresponds to a downward closed subcategory of $\cC_1$ with its associated cochain complex being contractible.  Hence the full subcategory $\cC_2$ of $\cL^{\Kh}(\wt{D}_{\wt{\bs}})$ generated by the objects $\Ob(\cL^{\Kh}(\wt{D}_{\wt{\bs}})) \setminus (X_1 \cup Y_1 \cup X_2 \cup Y_2)$ gives rise to the same stable homotopy type as $\cL^{\Kh}(\wt{D}_{\wt{\bs}})$.

Finally, it is clear that $\cC_2$ can be identified with $\cL^{\Kh}(D_\bs)$, and furthermore that the induced framing on $\cC_2$ is a framing as a disc flow cover of the sock flow category $\cS_{\bs}$.
\end{proof}

Theorem \ref{thm:main_knot_theorem} now follows as discussed above.

\subsection{Extended Reidemeister moves for $n>2$}
\label{subsec:ex_rmr}

In this subsection we perform consistency checks for the space associated to $\cL^n(D_\br)$ for $n > 2$.  In particular we show that the space is independent of the decomposition of the diagram $D$ into elementary tangle diagrams, and is invariant under the extended Reidemeister moves as illustrated in Figures \ref{fig:sln_R1} and \ref{fig:sln_R2}.

But first we must verify that the cohomology of the stable homotopy type does indeed return Khovanov-Rozansky cohomology.  Let us start by being specific about the quantum gradings.  We define a quantum grading on $\Ob(\cL^n(D_{\br}))$ for $n \geq 2$ in the case when we have both

\begin{enumerate}
	\item Each of the $m$ 2-strand tangles making up $D_{\br}$ is \emph{not} oriented as a 2-braid,
	\item and $\br = (r_1,\ldots,r_m) \in (2 \Z)^m$.
\end{enumerate}

\noindent (Note that when $D_\br$ is a diagram of a knot, condition (2) implies condition (1).)

In this case consider the object of $\cL^n(D_\br)$ lying in cohomological multidegree $(0,\ldots,0)$ obtained by decorating all the circles in the corresponding resolution by $1$.  We set the quantum grading of this object to be
\[ nR + (1-n)c {\rm .}\]

The quantum gradings of all other objects are determined by the requirements that the differential preserves the quantum degree and that multiplying the decoration of any circle by $x$ increases the quantum grading by $2$.

\begin{definition}
	We shall write $\X^n(D_\br)$ for the stable homotopy type associated to the framed flow category $\cL^n(D_\br)$ by the Cohen-Jones-Segal construction.  Note that the cohomology of $\X^n(D_\br)$ is bigraded by the quantum degree and the underlying cohomological degree of the objects of $\cL^n(D_\br)$.
\end{definition}

\begin{proof}[Proof of Theorem \ref{thm:cohom_of_Xn}]
Krasner's theorem \cite{Krasner} gives a simplified cochain complex for the Khovanov-Rozansky cohomology of a matched diagram.  This is exactly the (bigraded) CW-complex associated to
\[ \X^n(D_\br) [-R] \]
\noindent where the square brackets denote a cohomological shift. 
\end{proof}

Now we move to the consistency checks outlined in the introduction.  The proofs of these will follow similar lines to that of the proof of Proposition \ref{prop:Reidemeister_II_trick}.  More specifically, the arguments proceed by identifying suitable downward closed and upward closed subcategories such that the cohomology of the corresponding subcomplexes or quotient complexes is trivial.  We therefore allow ourselves to be brief with details.

\begin{figure}
	\begin{psfrags}
		\psfrag{+}{$+$}
		\psfrag{-}{$-$}
		\psfrag{pm}{$\pm$}
		\psfrag{mp}{$\mp$}
		\psfrag{ldots}{$\ldots$}
		\begin{center}
			\includegraphics[height=0.6in,width=3.0in]{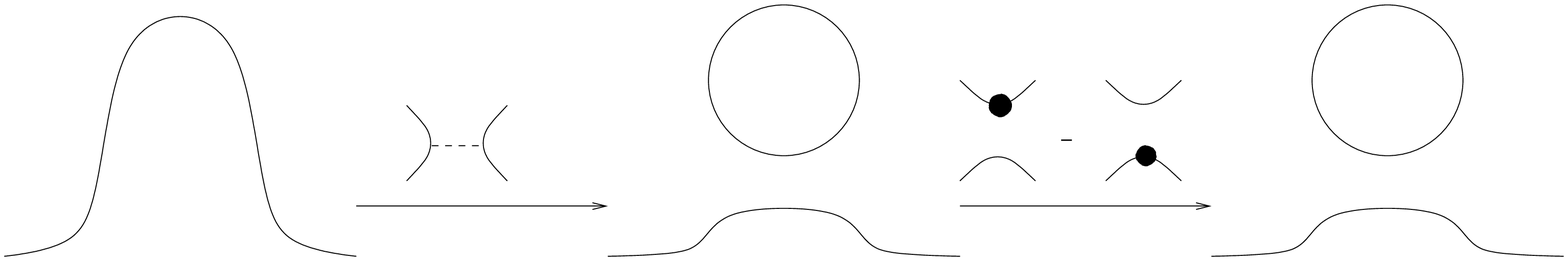}
		\end{center}
	\end{psfrags}
	\caption{The cochain complex associated to the flow category $\cL^n(D_\br)$ where $r_1 = 2$ and the first elementary tangle is capped as in Figure \ref{fig:sln_R1}.}
	\label{fig:fake_R1}
\end{figure}

We shall begin with the extended Reidemeister move RI as illustrated in Figure \ref{fig:sln_R1}.  Let $D_\br$ be a matched diagram containing the tangle on the left-hand side - in particular let us suppose without loss of generality that the two crossings comprise the first elementary tangle in the decomposition of $D_\br$ so that $r_1 = 2$.  Let $D'_{\br'}$ be a matched diagram corresponding to the right-hand side (so that $\br' = (r_2,\ldots,r_m)$).

\begin{proposition}
	We have
	\[\X^n(D_\br)[-2] \simeq \X^n(D'_{\br'}) {\rm .} \]
\end{proposition}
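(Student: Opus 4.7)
The proof will closely mirror the Gauss elimination argument used for Proposition \ref{prop:Reidemeister_II_trick}. The plan is to find a nested pair of acyclic sub/quotient complexes inside $\cL^n(D_\br)$ whose generators come from the first elementary tangle, so that after cancellation the remaining full subcategory, together with its induced framing, is a disc cover of $\cS^n_{\br'}$ isomorphic to $\cL^n(D'_{\br'})$ up to the stated cohomological shift.

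First I would describe the local structure of $\cL^n(D_\br)$ at the first elementary tangle. Since $r_1 = 2$, the first coordinate of a vertex takes three values $i_1 \in \{0,1,2\}$: either the horizontal smoothing of the two-crossing tangle (giving, by virtue of the R1 cap, a small free circle $C_{\mathrm{cap}}$ together with a strand identifiable with the left-most strand $S$ of $D'_{\br'}$), or one of the two vertical smoothings with an arc of weighting $(2,1)$ or $(2,2)$ attached to $S$. This partitions $\Ob(\cL^n(D_\br))$ into three ``columns'' of size proportional to $\Ob(\cL^n(D'_{\br'}))$ and the number of $x^k$-decorations ($0 \le k \le n-1$) of $C_{\mathrm{cap}}$ or of the weighted arc.

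Next I would identify the cancellable pairs. From Definition~\ref{def:partorder} and Definition~\ref{defn:numberofpoints}, the primitive relation between the $i_1 = 0$ and $i_1 = 1$ columns is of type (2) and contributes exactly one point in each moduli space, giving a bijection $x_1\colon Y_1 \to X_1$, where $Y_1 \subset \Ob(\cL^n(D_\br))$ is the set of objects in the $i_1=0$ column for which $C_{\mathrm{cap}}$ is labelled by any $x^k$ with $k \ge 1$, and $X_1 \subset \Ob(\cL^n(D_\br))$ is the set of $i_1 = 1$ objects whose arc decoration is the corresponding one forced by Definition~\ref{def:partorder}(2). The group generated by $X_1 \cup Y_1$ is a sub-cochain-complex that is acyclic by successive Gauss eliminations, so the corresponding upward-closed subcategory can be discarded without changing the stable homotopy type. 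An entirely analogous argument identifies a downward-closed acyclic subcategory $X_2 \cup Y_2$ pairing the surviving part of the $i_1 = 0$ column (labellings of $C_{\mathrm{cap}}$ by $1$) with suitably decorated objects in the $i_1 = 1$ column, leaving only the $i_1 = 2$ column.

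Finally I would check that the surviving full subcategory is isomorphic to $\cL^n(D'_{\br'})$ as a disc cover of $\cS^n_{\br'}$, with the induced framing agreeing with one built by the construction of Subsection~\ref{subsec:obstruction_n_theory} applied to $D'_{\br'}$. The surviving objects all sit at $i_1 = 2$, so they inherit cohomological grading shifted by $2$ compared with the corresponding objects of $\cL^n(D'_{\br'})$, which accounts for the $[-2]$ in the statement. By the obstruction-theoretic invariance results (Theorem~\ref{thm:sock_frame_doesnt_matter} and Proposition~\ref{prop:ind_of_sign_assignment}) any framing discrepancy introduced by the cancellation is immaterial. The main obstacle will be the careful combinatorial bookkeeping in the second step: one must match, across all $n$ choices of label on $C_{\mathrm{cap}}$ and on the weighted arcs, exactly which decorated objects pair off via single-point moduli spaces and which survive, and verify that after all cancellations the remaining moduli spaces (including those of higher dimension that connect objects in the $i_1=2$ column to themselves through the rest of $D_\br$) reproduce the moduli spaces of $\cL^n(D'_{\br'})$ as covers of $\cS^n_{\br'}$.
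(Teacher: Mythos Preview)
Your overall strategy---locate acyclic upward- and downward-closed subcategories supported on the first coordinate and Gauss-eliminate until only a copy of $\cL^n(D'_{\br'})$ remains---is exactly what the paper does. However, you have the local geometry of the capped two-crossing tangle backwards, and this propagates into incorrect pairings.

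In the configuration of Figure~\ref{fig:sln_R1}, the small free circle appears at the \emph{vertical} smoothings $i_1=1$ and $i_1=2$ (where the cap closes off one of the two vertical arcs), not at the horizontal smoothing $i_1=0$. At $i_1=0$ the cap simply turns the two horizontal strands into a single arc belonging to the rest of the diagram, with no extra circle to decorate. Consequently your sets $Y_1,X_1,Y_2,X_2$ are misplaced, and the claim that the entire $i_1=2$ column survives cannot be right: that column has $n$ times as many objects as $\cL^n(D'_{\br'})$, one for each decoration of the small circle.

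The paper instead identifies the surviving piece as the upward-closed subcategory consisting of those $i_1=2$ objects with the small circle decorated by $x^{n-1}$; this is already isomorphic to $\cL^n(D'_{\br'})$ (shifted by $2$). The complementary quotient is then killed in two steps: first pair every $i_1=0$ object with the $i_1=1$ object having the small circle labelled $x^{n-1}$ and all other labels agreeing (this is the type~(2a) relation, one point in the moduli space); second pair each remaining $i_1=1$ object with small circle $x^i$ against the $i_1=2$ object with small circle $x^{i+1}$ for $i=0,\ldots,n-2$ (type~(1c), again one point). With these corrected pairings your outline becomes the paper's proof.
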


\begin{proof}
Figure \ref{fig:fake_R1} shows the cochain complex associated the flow category $\cL^n(D_\br)$ collapsed to the first coordinate $r_1$ which runs from $0$ to $2$.  The subcomplex generated by those objects in degree $r_1 = 2$ in which the small circle is decorated with an $x^{n-1}$ corresponds to the framed flow category $\cL(D'_{\br'})$.

It is easily seen that the corresponding quotient complex has trivial cohomology.  Indeed one can Gauss eliminate, first pairing all objects in degree $r_1 = 0$ with those objects in degree $r_1 = 1$ in which the small circle is decorated with $x^{n-1}$ and the remaining decorations correspond in the obvious way, and secondly pairing objects in degree $r_1 = 1$ in which the small circle is decorated with a $x^i$ with those in degree $r_1 = 2$ in which the small circle is decorated with and $x^{i+1}$ for $i = 0,1,\ldots, n-2$.
\end{proof}

\begin{figure}
	\begin{psfrags}
		\psfrag{+}{$+$}
		\psfrag{-}{$-$}
		\psfrag{pm}{$\pm$}
		\psfrag{mp}{$\mp$}
		\psfrag{ldots}{$\ldots$}
		\psfrag{i}{$i$}
		\psfrag{n-i}{$j$}
		\psfrag{sum_i}{$\sum\limits_{\stackrel{i,j \geq 0}{i+j = n-1}}$}
		\begin{center}
			\includegraphics[height=3.3in,width=4.9in]{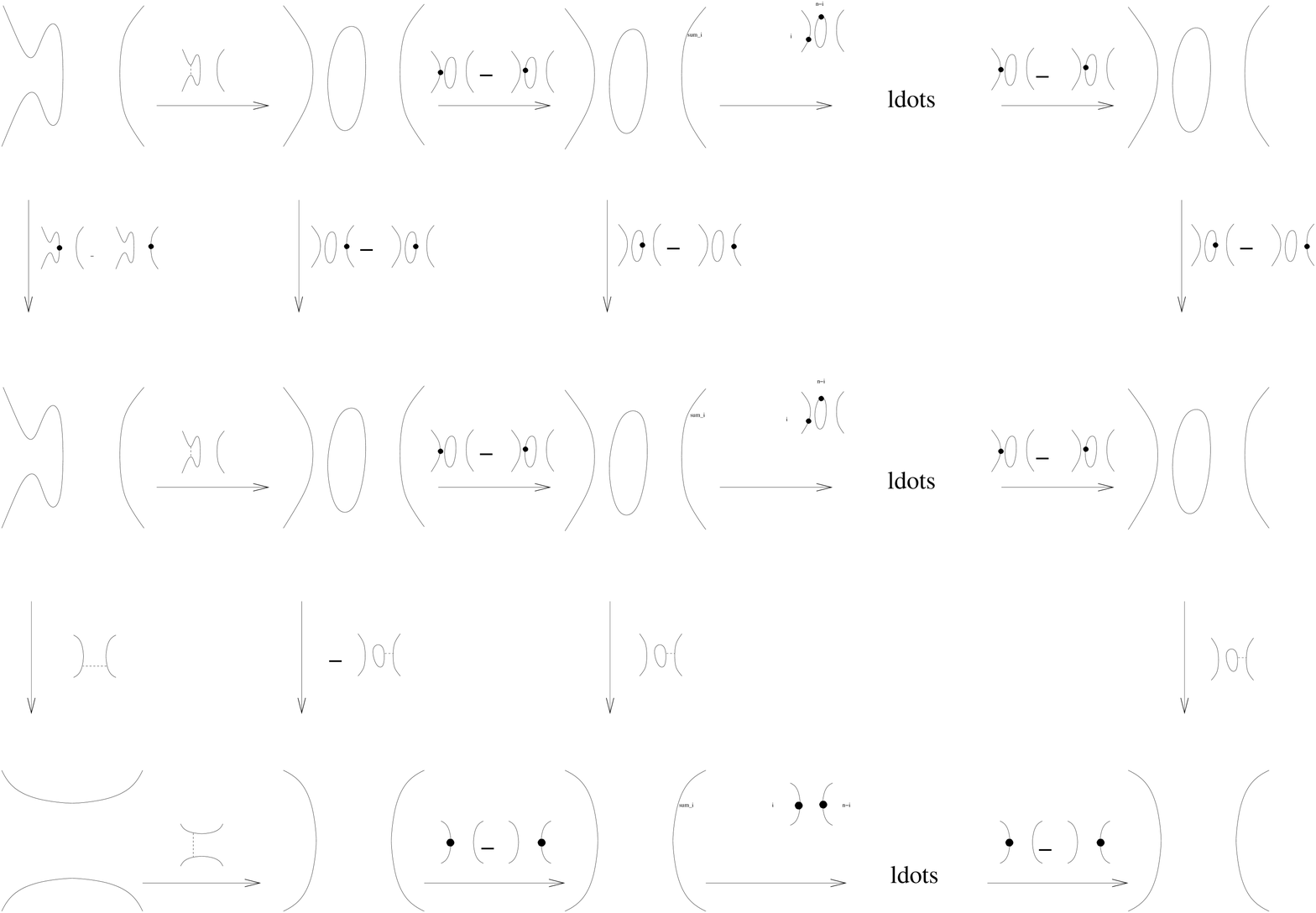}
		\end{center}
	\end{psfrags}
	\caption{The cochain complex associated to the flow category $\cL^{\Kh}(\wt{D}_{\wt{\bs}})$.}
	\label{fig:exzt_r2}
\end{figure}

With this in hand we have verified one half of Proposition \ref{prop:sln_R1+R2}.  We shall verify the other half at the same time as verifying Proposition \ref{prop:matched_independence}.  We use a similar trick to that involved in proving Proposition \ref{prop:all_discs_consequence}.  Namely, we take a matched diagram $D_\bs$ in which $\bs = (s_1 - 2, s_2, \ldots, s_m)$ with $s_1 \geq 2$ and an isotopic diagram $\wt{D}_{\wt{\bs}}$ where $\wt{\bs} = (-2, s_1, \ldots, s_m)$ obtained by performing an extended Reidemeister II move at one end of the first elementary tangle.

\begin{proposition}
	\label{prop:s-2=(s)-(2)}
	We have
	\[ \X(D_\bs) \simeq \X(\wt{D}_{\wt{\bs}}) {\rm .} \]
\end{proposition}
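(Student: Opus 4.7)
My strategy follows that of Proposition~\ref{prop:Reidemeister_II_trick}, adapted to the $n$-level setting. The plan is to identify upward-closed and downward-closed full subcategories of $\cL^n(\wt{D}_{\wt{\bs}})$ whose complementary sub- or quotient cochain complexes are acyclic, so that passing to these subcategories preserves the stable homotopy type, and after finitely many such reductions the remaining full subcategory agrees with $\cL^n(D_\bs)$ as a framed flow category (with its disc cover framing over $\cS^n_\bs$).

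First I would picture the cochain complex of $\cL^n(\wt{D}_{\wt{\bs}})$ collapsed onto the first two coordinates $(i_0,i_1)$, where $i_0 \in \{-2,-1,0\}$ is the weight on the initial $-2$ twist and $i_1 \in \{0,1,\ldots,s_1\}$ is the weight on the $s_1$ twist. When $i_0 \in \{-1,-2\}$ the resolution carries a ``small circle'' from the vertical smoothing of the $-2$ twist, which admits any of the $n$ labels $1,x,\ldots,x^{n-1}$. The first-coordinate transitions are governed by case~1a of Definition~\ref{def:partorder} between $i_0=-2$ and $i_0=-1$ (label multiplication by $x$, two-point moduli spaces per Definition~\ref{defn:numberofpoints}) and by case~2 between $i_0=-1$ and $i_0=0$ (surgery, single-point moduli spaces); the right-hand end $i_1 = s_1$ contributes additional cancellations involving the simultaneous vertical smoothing of the $s_1$ twist.

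Imitating the approach of Proposition~\ref{prop:Reidemeister_II_trick}, I would define a collection of sets $Y_i \subset \Ob(\cL^n(\wt{D}_{\wt{\bs}}))$ together with bijections $x_i : Y_i \to X_i$ to sets $X_i \subset \Ob(\cL^n(\wt{D}_{\wt{\bs}}))$ witnessed by single-point moduli spaces across primitive relations. A first batch would cancel all objects at $i_1 = s_1$ across the $(0,s_1)$-$(-1,s_1)$-$(-2,s_1)$ slice, generating an upward-closed full subcategory whose complement has acyclic cochain complex. A second batch would cancel the remaining $i_0 \in \{-1,-2\}$ objects using the case~1a and case~2 relations restricted to $0 \leq i_1 \leq s_1 - 1$, via a Gauss elimination whose pairings can be read off Krasner's simplified complex~\cite{Krasner} for the matched two-crossing summand. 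After these reductions the surviving full subcategory is in natural bijection with $\Ob(\cL^n(D_\bs))$, with compatible $0$- and $1$-dimensional moduli spaces; the framings agree because both are determined as disc covers of a sock flow category, and the elimination procedure (passage to full subcategories whose morphism spaces are inherited) preserves the disc-cover structure. The conclusion $\X^n(\wt{D}_{\wt{\bs}}) \simeq \X^n(D_\bs)$ then follows from the invariance of the Cohen--Jones--Segal realisation under passage to full subcategories with acyclic complements.

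The main obstacle will be to choose the sets $X_i$ and $Y_i$ so that all $n$ parallel label channels Gauss-eliminate coherently, and so that the resulting moduli spaces of all dimensions (not merely the $0$-dimensional data visible to the cochain complex) agree with those of $\cL^n(D_\bs)$ as constructed directly in Subsection~\ref{subsec:matched_category}. This is a purely combinatorial verification, but the number of distinct pairings grows linearly in $n$ compared to the single pair exploited in the $n=2$ case, and one must check that the disc cover $\cL^n(\wt{D}_{\wt{\bs}}) \to \wt{\cS^n_{\wt{\bs}}}$ restricts on the surviving subcategory to the intended disc cover $\cL^n(D_\bs) \to \wt{\cS^n_\bs}$.
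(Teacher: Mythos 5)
Your high-level strategy (strip off acyclic upward/downward-closed subcategories until the surviving full subcategory is identified with $\cL^n(D_\bs)$, then check that the induced disc-cover framing matches) is the same one the paper uses, and it is the right idea. But the specific batch decomposition you describe is not correct, in two ways. First, the full subcategory on objects with $i_1 = s_1$ is upward-closed but its cochain complex is \emph{not} acyclic: locally (tensoring against whatever the rest of the diagram contributes) it has the form $A \otimes A \xrightarrow{\ x\otimes 1 - 1\otimes x\ } A\otimes A \xrightarrow{\ m\ } A$ with $A = \Z[x]/x^n$, whose kernel in the first spot is the $(2n-1)$-dimensional space spanned by the anti-diagonal sums $\sum_{i+j=s}x^i\otimes x^j$ and whose middle cohomology has dimension $n-1$. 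So removing that slice changes the stable homotopy type. Second, even granting both batches, cancelling all of $i_1 = s_1$ and then all of $i_0 \in \{-1,-2\}$ at $0\leq i_1\leq s_1-1$ leaves objects at bigradings $(0,i_1)$ for $0\leq i_1 \leq s_1-1$, one $i_1$-slice too many to match $\cL^n(D_\bs)$ with $\bs = (s_1-2,\ldots)$, which requires $0\leq i_1\leq s_1-2$.

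The paper's acyclic subcomplex is chosen more delicately precisely to repair both problems: it takes all generators at $(0,s_1-1)$, $(0,s_1)$, $(-1,s_1)$, together with the generators at $(-1,s_1-1)$ and $(-2,s_1)$ whose small circle is not $x^{n-1}$ and those at $(-2,s_1-2)$ whose small circle is neither $x^{n-2}$ nor $x^{n-1}$. Only this mixed selection across several bigradings, filtered by the small-circle decoration, gives an honest acyclic subcomplex, and it removes the extra $(0,s_1-1)$ slice at the same time; the second step then lands exactly on the $(0,i_1)$, $0\leq i_1\leq s_1-2$ slice. You should also note that the primitive relations between $i_0=-2$ and $i_0=-1$ at these bigradings are governed by case 1c (arc endpoints on \emph{different} circles, single-point moduli spaces with two possible targets) rather than case 1a as you assert; the small circle $c_m$ is distinct from the cup circle generically, which is exactly why the label-multiplication picture feeding your cancellation scheme does not line up.
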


Note that Proposition \ref{prop:s-2=(s)-(2)} implies the remaining half of Proposition \ref{prop:sln_R1+R2} just by taking $s_1 = 2$.

\begin{figure}
	\centerline{
		{
			\psfrag{ldots}{$\ldots$}
			\psfrag{L}{$L$}
			\psfrag{R}{$R$}
			\psfrag{n+1}{$r_1 + 2$}
			\psfrag{n}{$r_1$}
			\psfrag{1}{$2$}
			\psfrag{-1}{$-2$}
			\includegraphics[height=2in,width=3.5in]{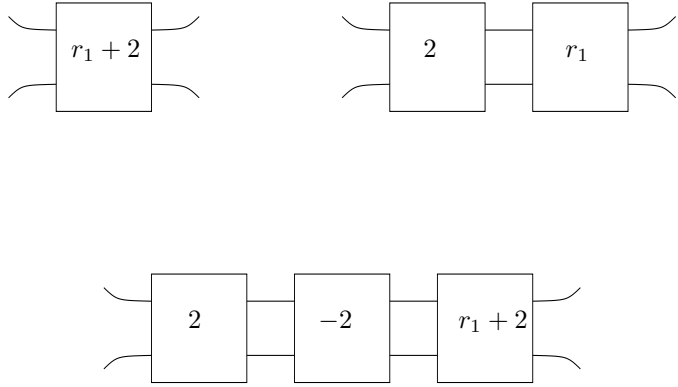}
		}}
		\caption{In this diagram the numbered boxes represent tangle diagrams consisting of the given number of horizontal positive half-twists (negative numbers mean negative half-twists).  We are taking $r_1$ to be a positive even integer.  Clearly all three tangles are isotopic.}
		\label{fig:sln_induction_twist}
	\end{figure}

Furthermore, if three matched decompositions differ locally as shown in Figure \ref{fig:sln_induction_twist}, then Proposition \ref{prop:s-2=(s)-(2)} shows that the bottom decomposition gives rise to the same stable homotopy type as both of the top decompositions.  This then implies Proposition \ref{prop:matched_independence}.

\begin{proof}[Proof of Proposition \ref{prop:s-2=(s)-(2)}]
	In Figure \ref{fig:exzt_r2} we have drawn the cochain complex associated to $\cL^{n}(\wt{D}_{\wt{\bs}})$ collapsed to the first two coordinates of the cohomological multigrading $(i_0, i_1, \ldots, i_m)$ where $-2 \leq i_0 \leq 0$ and $i_j$ lies between $0$ and $s_j$ for each $1 \leq j \leq m$.  In particular, the top left resolution stands for all cochain groups summands with $(i_0,i_1) = (-2, 0)$ while the bottom right consists of those summands with $(i_0,i_1) = (0,s_1)$.  We are free to choose the left and right endpoints of the first two elementary tangles by Proposition \ref{prop:left_and_right_invariance}.
	
	Consider the subcomplex formed by all generators of bigrading $(0,s_1 -1)$, $(0,s_1)$, and $(-1, s_1)$ as well as those generators of bigradings $(-1, s_1 -1)$ and $(-2, s_1)$ in which the small circle is not decorated with $x^{n-1}$ and those generators of bigrading $(-2, s_1 -2)$ in which the small circle is decorated with neither $x^{n-2}$ nor $x^{n-1}$.  It is an exercise to see that this is indeed a subcomplex and that it is contractible.
	
	Taking the quotient by this subcomplex we obtain a homotopy equivalent complex.  In this new complex those generators of bigradings $(0, i_1)$ for $0 \leq i_1 \leq s_1 - 2$ form a subcomplex whose associated quotient is again contractible.
	
	The upshot, when one translates this into statements about upward closed and downwards closed subcategories, is that the full subcategory of $\cL^{n}(\wt{D}_{\wt{\bs}})$ with object set consisting of those objects of bigradings $(0, i_1)$ for $0 \leq i_1 \leq s_1 - 2$ gives rise to the same stable homotopy type as $\cL^{n}(\wt{D}_{\wt{\bs}})$.  Since this full subcategory is isomorphic to $\cL^{n}({D}_{{\bs}})$ (framed as a cover of $\cS_\bs$) we are done.
\end{proof}

\section{The second Steenrod square}
\label{sec:steen_squ}
The construction of a stable homotopy type $\X^n(L)$ enables us to use the tools of stable homotopy theory to study the link $L$. This raises the question of with what extra information these tools may provide us. In \cite{LipSarSq} Lipshitz and Sarkar give a computable description of the second Steenrod square and use this to determine $\KhSpace(L)$ for all links up to 11 crossings. In particular, they observe many instances where the second Steenrod square is non-trivial. Seed \cite{Seed} extended these computations and identified several pairs of knots and links which can be distinguished through their stable homotopy type, but not through Khovanov cohomology.

The description of the second Steenrod square in \cite{LipSarSq} is somewhat optimized for the Khovanov flow category defined in \cite{LipSarKhov}, but their constructions can be easily modified to describe the second Steenrod square for any framed flow category. In the following subsection we recall this description and extend it to a framed flow category.

\subsection{The second Steenrod square of a framed flow category}
\label{subsec:steen_fl_cat}
Let $M_n=M(\Z/2,m)$ be the $m$-th Moore space for $\Z/2$, obtained from $S^m$ by attaching one $(m+1)$-cell using an attaching map of degree $2$. We can think of this space as the $(m+1)$-skeleton of an Eilenberg-MacLane space $K(\Z/2,m)$. We also assume that $m$ is at least $3$.
It is shown in \cite{LipSarSq} that $\pi_{m+1}(M_m)\cong \Z/2$, and that the inclusion $S^m\subset M_m$ induces an isomorphism on $\pi_{m+1}$. So in order to construct a $K(\Z/2,m)$ from $M_m$ we need to attach one $(m+2)$-cell $\tau$ to kill $\pi_{m+1}(M_m)$, and then cells of dimension $\geq m+3$. By a Theorem of Serre, see for example \cite{mccleary}, $H^{m+2}(K(\Z/2,m);\Z/2)\cong \Z/2$ generated by $\Sq^2(\iota)$, where $\iota\in H^m(K(\Z/2,m);\Z/2)$ is the generator.

Let $X$ be a CW-complex and $u\in H^m(X;\Z/2)$. Let $f\colon X\to K(\Z/2,m)$ be a cellular map with $f^\ast\iota=u$. Then $\Sq^2(u)=f^\ast\Sq^2\iota$. We want to give a cocycle representative for $\Sq^2 u$, so we need to determine $f^\ast \tau'$, where $\tau'\in C^{m+2}(K(\Z/2,m);\Z/2)$ is (necessarily) the cocycle representing $\Sq^2\iota$ and the dual of $\tau$.

Let $\sigma$ be an $(m+2)$-cell in $X$ with attaching map $\chi_\sigma\colon S^{m+1}\to X^{(m+1)}$. Note that we have a restriction $f|\colon X^{(m+1)}\to M_m$. The composition $f|\circ \chi_\sigma$ represents an element $\pi_{m+1}(M_m)\cong \Z/2$, and by the construction of $f$ this element also represents the evaluation of the cocycle $f^\ast\tau'$ on $\sigma$.

So a cocycle representation $\sq^2 u\in C^{m+2}(X;\Z/2)$ for $\Sq^2 u$ is determined by the elements $[f|\circ \chi_\sigma]\in \pi_{m+1}(M_m)$.

Now let $(\Cat,\imath,\varphi)$ be a framed flow category, and $u\in H^i(\Cat;\Z/2)$ a cohomology class. Because of \cite[Prop.3.6]{LipSarSq} we can assume that the grading has only values in $\{i,i+1,i+2\}$. The neat embedding $\imath$ is relative to some $\mathbf{d}=(d_i,d_{i+1})$. Write $m=d_i+d_{i+1}$, so that $H^i(\Cat,\Z/2)\cong \tilde{H}^m(|\Cat|;\Z/2)$.

Let $c\in C^i(\Cat;\Z/2)$ be a cocycle representing $u$. Then 
\[
c=\sum_{x\in \Ob_i(\Cat)}n_xx
\]
for some $n_x\in \Z/2$, where $\Ob_i(\Cat)$ are those objects $x$ with $|x|=i$. Now $|\Cat|^m$ is a wedge of $m$-spheres, one for each object $x$ with $|x|=i$, and we define a map $f^m\colon |\Cat|^m\to M_m$ so that $f^m|\mathcal{C}(x)$ is a degree one map onto the $m$-skeleton of $M_m$ if $n_x=1$ or the constant map to the basepoint if $n_x=0$.

Now let $y\in \Ob(\Cat)$ satisfy $|y|=i+1$. We have an inclusion
\[
\imath_y\colon \coprod_{x\in \Ob_i(\Cat)}\mathcal{M}(y,x)\times [-\varepsilon,\varepsilon]^{d_i}\to \R^{d_i}
\]
Since $c$ is a cocycle, the set $\coprod \mathcal{M}(y,x)$ has an even number of elements for each $y$, so let $n(y)$ be the number so that $2n(y)$ is the cardinality of this set.

\begin{definition}
A \em topological boundary matching for $c$ \em consists of the following data for each object $y$ with $|y|=i+1$: A collection of disjoint, embedded, framed arcs
\[
\eta^j_y\colon [0,1]\times [-\varepsilon,\varepsilon]^{d_i} \to [0,\infty)\times \R^{d_i}
\]
for $j=1,\ldots,n(y)$, such that
\[
(\eta^j_y)^{-1}(\{0\}\times \R^{d_i}) = \{0,1\}\times [-\varepsilon,\varepsilon]^{d_i}
\]
and 
\[
\bigcup_j \eta^j_y(\{0,1\}\times \{0\}) = \{0\}\times \imath_y\left(\coprod_x\mathcal{M}(y,x)\right)
\]
For each arc $\eta^j_y$ there exist $A_0,A_1\in \coprod_x\mathcal{M}(y,x)$ with $\eta_y^j(0,0)=(0,\imath_y(A_0,0))$ and $\eta_y^j(1,0)=(0,\imath_y(A_1,0))$. If the framings of $A_0$ and $A_1$ have a different sign, we assume that the framing of $\eta^j_y|\{0,1\}$ agrees with the framings of $A_0$ and $A_1$, in which case we call the arc \em boundary-coherent\em.

If the framings of $A_0$ and $A_1$ have the same sign, we assume that the framing of $\eta^j_y$ agrees with the corresponding framing at one endpoint, and at the other endpoint it agrees with the composition of the corresponding framing and the reflection $R\colon \R^{d_i}\to \R^{d_i}$ given by $R(x_1,\ldots,x_{d_i})=(-x_1,x_2,\ldots,x_{d_i})$. In this case the arc is called \em boundary-incoherent \em and we orient it from the endpoint at which the framings agree to the endpoint at which the framings disagree.
\end{definition}

For each object $y$ with $|y|=i+1$ we now get an embedding
\[
\eta_y\colon \coprod_{j=1}^{n(y)}[0,1] \times [-\varepsilon,\varepsilon]^{d_i+d_{i+1}}\to \mathcal{C}(y)
\]
given by
\[
\eta_y(t,x_1,\ldots,x_{d_i},y_1,\ldots,y_{d_{i+1}})=(\eta^j_y(t,x_1,\ldots,x_{d_i}),y_1,\ldots,y_{d_{i+1}})
\]
when restricted to the $j$-th copy of the disjoint union.

We can extend $f^m$ to $f^{m+1}\colon |\Cat|^{m+1}\to M_m$ as follows. Away from the image of $\eta_y$, the cell $\mathcal{C}(y)$ is send to the basepoint. On the image of a boundary-coherent arc we project
\[
[0,1]\times [-\varepsilon,\varepsilon]^{d_i+d_{i+1}} \to [-\varepsilon,\varepsilon]^{d_i+d_{i+1}} \to S^{d_i+d_{i+1}}=M_m^m.
\]
On the image of a boundary-incoherent arc we map $[0,1]\times [-\varepsilon,\varepsilon]^{d_i+d_{i+1}}$ directly over the $(m+1)$-cell of $M_m$ with degree 1 as in \cite[\S 3.4]{LipSarSq}.

We can now obtain a cocycle representing $\Sq^2(u)$ as described above for a CW-complex $X$. In order to get a computable description, we need to be more careful with the framings, compare \cite[\S 3.2]{LipSarSq}.

To begin with, we assume that every framing of a point in a $0$-dimensional moduli space is either given by the standard basis of $\R^{d_j}$, or by the basis we obtain by reflecting the standard basis in the first coordinate. This is done by choosing a 1-parameter family of framings for the $0$-dimensional moduli spaces, which then extends to a 1-parameter family of framings of flow categories from the original category to a slightly deformed one, using collar neighborhoods and the techniques of \cite[\S 3]{LipSarKhov}.

A framing of a path embedded in $\R^{m+1}$ gives rise to a path of $m$ orthonormal vectors in $\R^{m+1}$. Given $m$ orthonormal vectors in $\R^{m+1}$ there is a unique unit vector in $\R^{m+1}$ which will turn this collection into an element of $\SO(m+1)$. Therefore a framed path in $\R^{m+1}$ gives rise to a path in $\SO(m+1)$. Furthermore, isotopy classes of framed paths in $\R^{m+1}$ with fixed endpoints are in one-to-one correspondence with homotopy classes of paths in $\SO(m+1)$ with fixed endpoints, provided $m\geq 3$.

Moduli spaces $\mathcal{M}(z,x)$ where $|z|=i+2=|x|+2$ are disjoint unions of compact intervals and circles, and are framed embedded in $\R^{d_i}\times [0,\infty)\times \R^{d_{i+1}}$ with endpoints in $\R^{d_i}\times \{0\}\times \R^{d_{i+1}}$. The possible framings at the endpoints are therefore given by
\begin{align*}
&(e_1,\ldots,e_{d_i},e_{d_i+1},\ldots,e_{e_i+e_{i+1}}), (-e_1,e_2,\ldots,e_{d_i},e_{d_i+1},\ldots,e_{e_i+e_{i+1}}),\\ &(e_1,\ldots,e_{d_i},-e_{d_i+1},e_{d_i+2}\ldots,e_{e_i+e_{i+1}}), (-e_1,e_2,\ldots,e_{d_i},-e_{d_i+1},e_{d_i+2}\ldots,e_{e_i+e_{i+1}})
\end{align*}
which we denote by $\PP$, $\MP$, $\PM$, $\MM$, respectively.

\begin{definition}
A \em coherent system of paths joining $\PP$, $\MP$, $\PM$, $\MM$ \em is a choice of path $\overline{\varphi_1\varphi_2}$ in $\SO(m+1)$ from $\varphi_1$ to $\varphi_2$ for each pair of frames $\varphi_1,\varphi_2\in \{\PP,\MP,\PM,\MM\}$ satisfying the following cocycle conditions:
\begin{enumerate}
	\item For all $\varphi\in \{\PP,\MP,\PM,\MM\}$ the loop $\overline{\varphi\varphi}$ is null-homotopic;
	\item For all $\varphi_1,\varphi_2,\varphi_3\in \{\PP,\MP,\PM,\MM\}$ the path $\overline{\varphi_1\varphi_2}\cdot \overline{\varphi_2\varphi_3}$ is homotopic to $\overline{\varphi_1,\varphi_3}$ relative to the endpoints.
\end{enumerate}
\end{definition}

Coherent systems of paths exist, we will use the one described in \cite[Lm.3.1]{LipSarSq}. To describe it, we will refer to the first coordinate of $\R^{d_i}$ as the $e_1$-coordinate, to the first coordinate of $\R^{d_{i+1}}$ as the $e_2$ coordinate, and to the coordinate of $[0,\infty)$ as the $\bar{e}$-coordinate.

For $\varphi_1,\varphi_2\in \{\PP,\MP,\PM,\MM\}$ define $\overline{\varphi_1\varphi_2}$ as follows:
\begin{enumerate}
	\renewcommand\theenumi{\roman{enumi}}
	\item $\overline{\PP\MP}, \overline{\MP\PP}, \overline{\PM\MM}, \overline{\MM\PM}$: Rotate $180^\circ$ around the $e_2$-axis, such that the first vector equals $\bar{e}$ halfway through.
	\item $\overline{\PP\PM}, \overline{\PM\PP}$: Rotate $180^\circ$ around the $e_1$-axis, such that the second vector equals $\bar{e}$ halfway through.
	\item $\overline{\MP\MM}, \overline{\MM\MP}$: Rotate $180^\circ$ around the $e_1$-axis, such that the second vector equals $-\bar{e}$ halfway through.
	\item $\overline{\PP\MM}, \overline{\MM\PP}, \overline{\MP\PM}, \overline{\PM\MP}$: Rotate $180^\circ$ around the $\bar{e}$-axis, such that the second vector equals $-e_1$ halfway through.
\end{enumerate}

A framed path in $\R^{d_i}\times[0,\infty)\times \R^{d_{i+1}}$ with endpoints in $\R^{d_i}\times\{0\}\times \R^{d_{i+1}}$ given by two elements of $\{\PP,\MP,\PM,\MM\}$ is called a \em standard frame path\em, if its homotopy class in $\SO(m+1)$ is one of the classes described in (i)-(iv) relative endpoints, and a \em non-standard frame path\em, if it is not.

If $\Cat$ is a framed flow category such that all $0$-dimensional moduli spaces are framed using the standard $+$ or $-$ frame, each interval in a $1$-dimensional moduli space is either a standard or a non-standard frame path. Furthermore, each circle component of a $1$-dimensional moduli space represents an element of $\pi_1(\SO(m+1))\cong H_1(\SO(m+1))\cong \Z/2$. The framing of a $1$-dimensional moduli space $\mathcal{M}(z,x)$ is therefore encoded in a function $$fr\colon \pi_0(\mathcal{M}(z,x)) \to \Z/2$$ with $0$ corresponding to the trivial element of $\pi_1(\SO(m+1))$ and the standard frame path.

\begin{definition}
The framed components in a $1$-dimensional moduli space which are intervals, are called \em Pontryagin-Thom arcs\em, and the framed components which are circles are called \em Pontryagin-Thom circles\em.
\end{definition}

Now given a cohomology class $u\in H^i(\Cat;\Z/2)$, recall that we have constructed a map $f^{m+1}\colon |\Cat|^{(m+1)} \to M_m$ with $f^{m+1\ast}\iota=u$. For each $z\in \Ob(\Cat)$ with $|z|=i+2$ we can compose the attaching map $\chi_z\colon S^{m+1}\to |\Cat|^{(m+1)}$ of $\mathcal{C}(z)$ with $f^{m+1}$, and consider the resulting element $\pi_{m+1}(M_m)\cong \Z/2$. Furthermore, the inclusion $S^m\subset M_m$ induces an isomorphism on $\pi_{m+1}$. Let us denote the composition $f^{m+1}\circ \chi_z=f_z\colon S^{m+1}\to M_m$, and recall that we have
$$S^{m+1}\cong \partial \mathcal{C}(z)=\partial ([0,R]\times [-R,R]^{d_i}\times [0,R]\times [-R,R]^{d_{i+1}}).$$
For each $y\in \Ob(\Cat)$ with $|y|=i+1$ we have a set
$$C_y(z)=[0,R]\times [-R,R]^{d_i}\times \{0\}\times \mathcal{M}(z,y)\times[-\varepsilon,\varepsilon]^{d_{i+1}}\subset \partial \mathcal{C}(z)$$
which contains framed arcs $\gamma_j\times [-\varepsilon,\varepsilon]^{d_i}\times\{0\}\times \mathcal{M}(z,y)\times [-\varepsilon,\varepsilon]^{d_{i+1}}$ coming from the topological matchings. Together with the Pontryagin-Thom arcs, which also sit in $\partial \mathcal{C}(z)$ via $C_x(z)$ for $x$ in the support of the cocycle representing $u$, these combine to framed circles in $\partial\mathcal{C}(z)$. For the correct smoothing of these framed circles, see \cite[\S 3.4]{LipSarSq}. The Pontryagin-Thom circles in $\mathcal{M}(z,x)$ also give framed circles in $\partial\mathcal{C}(z)$.

Away from these framed circles, the map $f_z$ is the constant map to the basepoint. Also, if none of the arcs from the topological matching is boundary-incoherent, the image of $f_z$ is already contained in the $m$-skeleton of $M_m$, that is, in an $m$-sphere.

If a framed circle contains boundary-incoherent arcs, it contains an even number of them, compare \cite[Lm.3.9]{LipSarSq}, and we can use the same method as in \cite{LipSarSq} to alter the map $f_z$ to a map $f_z'\colon S^{m+1}\to S^m\subset M_m$ representing $f_z$ in $\pi_{m+1}(M_m)$. We will not repeat this construction here, but note that the proof of \cite[Prop.3.10]{LipSarSq} does not rely on the special form of flow categories considered there. We note that in order to use their construction, we should ensure that the topological matchings lead to standard framed arcs in $[0,\infty)\times \R^m$. This can be arranged, because the arcs in the framed circles above which come from the topological matching are always between points $(A,C)$ and $(B,C)$ with $A,B\in \mathcal{M}(y,x)$ and $C\in\mathcal{M}(z,y)$.

Overall, we obtain a finite number of framed circles $(K,\Phi)$ embedded in $S^{m+1}\cong \partial\mathcal{C}(z)$ which determine the value of the cocycle $c_f$ representing $\Sq^2(u)$. The value of each $(K,\Phi)$ as an element of the first framed bordism group $\Omega^{fr}_1\cong \Z/2$ can be read off from the following Proposition.

\begin{proposition}
\label{prop:add_form_frame}
Let $(K,\Phi)$ be a framed circle arising from the above. If $K$ is built from Pontryagin-Thom arcs and arcs from topological matchings, its value in $\Omega^{fr}_1\cong \Z/2$ is given as the sum of
\begin{enumerate}
	\item The number $1$.
	\item The number of Pontryagin-Thom arcs in $K$ with the non-standard framing.
	\item The number of arrows on $K$ which point in a fixed direction.
\end{enumerate}
If $K$ consists of a Pontryagin-Thom circle, its value in $\Omega^{fr}_1\cong \Z/2$ is given as the sum of
\begin{enumerate}
	\item The number $1$.
	\item The value of the map $fr\colon \pi_0(\mathcal{M}(z,x))\to \Z/2$ on the component corresponding to $K$.
\end{enumerate}
\end{proposition}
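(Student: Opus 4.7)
The plan is to identify the framed bordism class $[(K,\Phi)]\in \Omega^{fr}_1\cong \Z/2$ with a homotopy class of loop in the orthogonal group. For $m\geq 3$ one has $\Omega^{fr}_1\cong \pi_1(\SO(m+1))\cong \Z/2$: after trivializing the tangent bundle of the ambient Euclidean space, the framing along $K$ gives a map $S^1\to \SO(m+1)$, whose homotopy class determines the class of $(K,\Phi)$. Under this identification the embedded circle with constant (product) framing represents the generator of $\pi^s_1$, not the identity, and this is the source of the universal ``$+1$'' in both formulae.

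For the Pontryagin--Thom circle case, the map $fr\colon \pi_0(\M(z,x))\to \Z/2$ is defined so that $fr(K)=0$ if the framing represents the trivial loop in $\SO(m+1)$ relative to the standard frame path on the ambient interval, and $fr(K)=1$ otherwise. Combined with the universal shift, the class of $(K,\Phi)$ is $1+fr(K)\pmod 2$, as claimed.

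For case (1) I would decompose the loop in $\SO(m+1)$ traced out by $\Phi$ as a concatenation of paths, one for each constituent arc of $K$. A standard Pontryagin--Thom arc realizes one of the standard frame paths (i)--(iv) between its endpoint frames in $\{\PP,\MP,\PM,\MM\}$; a non-standard Pontryagin--Thom arc differs from this by a full twist in $\pi_1(\SO(m+1))$. By the cocycle conditions in the definition of a coherent system of paths, concatenating only standard paths around a closed cycle produces a null-homotopic loop. Consequently each non-standard Pontryagin--Thom arc contributes exactly $1$ to the loop class. Boundary-coherent matching arcs are by construction modelled on the standard paths (the framing extends coherently), so they contribute $0$. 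A boundary-incoherent matching arc carries a reflection $R$ at one endpoint; inserting this reflection changes the path in $\SO(m+1)$ relative to the corresponding standard path by a loop representing the non-trivial class of $\pi_1(\SO(m+1))$, with the sign of the contribution depending on the direction of traversal. Counting arrows that point in any fixed direction along $K$ gives the correct parity of boundary-incoherent contributions.

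Summing the universal $1$, the count of non-standard Pontryagin--Thom arcs, and the count of arrows pointing in a fixed direction yields the claimed formula. The main obstacle is the detailed verification that the reflection $R$ in the first coordinate of $\R^{d_i}$ indeed produces a path in $\SO(m+1)$ differing from the corresponding standard path (i)--(iv) by exactly the non-trivial loop, and that the orientation convention on boundary-incoherent arcs (from the agree-endpoint to the disagree-endpoint) lines up with the ``fixed direction'' count under cyclic traversal. This is a direct but fiddly computation in $\SO(m+1)$, closely paralleling the analogous computation carried out in \cite[\S 3--4]{LipSarSq} for the Khovanov flow category; the only new ingredient is checking that Pontryagin--Thom circles are correctly handled by the map $fr$, which is immediate from the setup.
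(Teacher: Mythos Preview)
Your proposal is essentially correct and follows the same route as the paper: for the arc case both you and the paper defer to \cite[Prop.~3.12]{LipSarSq}, and for the Pontryagin--Thom circle case you both observe that $fr$ records the class in $\pi_1(\SO(m+1))$ while the constant loop corresponds to the nontrivial element of $\Omega^{fr}_1$, whence the extra $1$. The paper's proof is in fact nothing more than these two citations.

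One point of caution in your sketch: your account of why boundary-incoherent arcs contribute is not quite the actual mechanism. You describe the reflection $R$ as producing an extra twist in $\SO(m+1)$, but in the construction the boundary-incoherent arcs are precisely where $f_z$ maps over the $(m+1)$-cell of the Moore space $M_m$ (via the degree-$2$ attaching map), not into $S^m$. The arrow-counting formula arises only after the modification procedure of \cite[\S 3.4]{LipSarSq}, which pairs up the (necessarily even number of) boundary-incoherent arcs on each circle and homotopes $f_z$ to a map $f_z'\colon S^{m+1}\to S^m$. Since you ultimately defer to \cite{LipSarSq} for the details this is not a gap, but your intermediate heuristic should not be read as the proof.
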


The value of the cocycle $c_f\in C^{i+2}(\Cat;\Z/2)$ on $z$ is then the sum of these numbers over all such framed circles $(K,\Phi)$.

\begin{proof}
In the case of $K$ being build from arcs, this follows directly from \cite[Prop.3.12]{LipSarSq}. If $K$ is a Pontryagin-Thom circle, then $fr([K])$ is determined by the element of $\pi_1(\SO(m+1))$ the framed circle represents. But the constant loop in $\SO(m+1)$ corresponds to the non-trivial element of $\Omega_1^{fr}$ which explains the extra summand $1$.
\end{proof}

\begin{example}
Consider the Morse function $f\colon\RP^n\to \R$ from Section \ref{subsec:the_n_sock} (disguised there as a Lens space), and let $\Cat_i$ be the framed flow category obtained by restricting to the critical points $p_i$, $p_{i+1}$ and $p_{i+2}$. Write $\{L_i,R_i\}=\mathcal{M}(p_{i+1},p_i)$ and $\{L_{i+1},R_{i+1}\}=\mathcal{M}(p_{i+2},p_{p+1})$. Then $\mathcal{M}(p_{i+2},p_i)$ consists of two intervals. From the description in Section \ref{subsec:the_n_sock} we know that one interval, say $I_1$, bounds  $\{(L_i,R_{i+1}),(R_i,L_{i+1})\}$, and the other interval $I_2$ bounds $\{(L_i,L_{i+1}),(R_i,R_{i+1})\}$.

\begin{figure}[h]
	\centerline{
			\psfrag{eta1}{$\eta\times\{L_{i+1}\}$}
			\psfrag{eta2}{$\eta\times\{R_{i+1}\}$}
			\psfrag{LL}{$(L_i,L_{i+1})$}
			\psfrag{LR}{$(L_i,R_{i+1})$}
			\psfrag{RL}{$(R_i,L_{i+1})$}
			\psfrag{RR}{$(R_i,R_{i+1})$}
			\psfrag{I1}{$I_1$}
			\psfrag{I2}{$I_2$}
			\includegraphics[height=1in,width=2in]{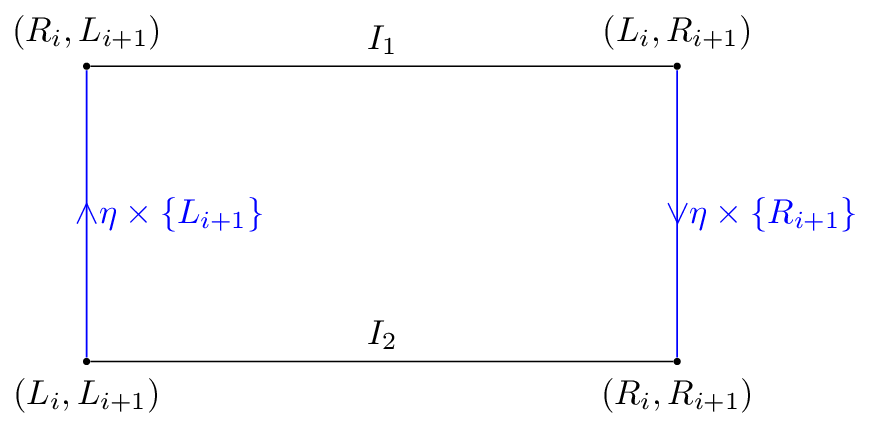}
		}
		\caption{The framed circle for the generator in degree $i+2$. Note that $i$ needs to be odd for the two vertical arcs to be oriented.}
		\label{fig_proj_space}
\end{figure}

For the topological matching, we choose an arc $\eta$ between $L_i$ and $R_i$. This arc is boundary-coherent if and only if $i$ is even. If $i$ is odd, we choose an orientation, say from $L_i$ to $R_i$. In the boundary of the sphere for the object $p_{i+2}$ we now get two arcs coming from the topological matching, namely $\eta\times \{L_{i+1}\}$ and $\eta\times \{R_{i+1}\}$. Note that if $i$ is odd, both of these arcs are oriented in the same direction, so they do not contribute according to Proposition \ref{prop:add_form_frame}.
The corresponding circle is indicated in Figure \ref{fig_proj_space}. The Steenrod square $\Sq^2(z^i)$ of the generator $z^i\in H^i(\RP^n;\Z/2)$ can be calculated using the Cartan formula, so we see that for $i=2j$ or $i=2j+1$ we get that the intervals $I_1$ and $I_2$ have to have the same framings for $j$ odd (in order to get a non-trivial Steenrod square), and different framings for $j$ even (in order to get the trivial Steenrod square). We will not attempt to work out the exact framing for the intervals. Furthermore, they do depend on the choice of rotation of the point in $\mathcal{M}(p_{i+1},p_i)$ which is framed by $-e_1,\ldots,-e_i$ rather than by the standard framing. Note that changing this rotation will always affect both intervals.
\end{example}

\begin{example}
If we use the $\CP^n$ version of this Morse function, we get the same kind of critical points $p_i$, but this time each $\mathcal{M}(p_{i+1},p_i)$ is a circle. To understand the framing, write $p_i=[\cdots:1:0:\cdots]$, $p_{i+1}=[\cdots:0:1:\cdots]$ and $$\mathcal{M}(p_{i+1},p_i)=\{[\cdots:z:1:\cdots]\,|\,z\in S^1\},$$ which we can think of as embedded in $W^s(p_{i+1})\cong \R^{2i+2}$. The first $2i$ frames come from the standard basis $\frac{\partial}{\partial x_0},\frac{\partial}{\partial y_0},\ldots,\frac{\partial}{\partial x_{i-1}},\frac{\partial}{\partial y_{i-1}}$ at $p_i$. The point $[\cdots:z:1:\cdots]$ is identified with $[\cdots:1:\bar{z}:\cdots]$ which has this standard framing.
Therefore the framing at $[\cdots:z:1:\cdots]$ is the standard basis multiplied with $z\in S^1$. The final frame comes from the flow direction. Together with the tangent vector, we see that the corresponding element of $\pi_1(\SO(2i+2))$ is represented by the loop of unitary matrices
\begin{equation*}
z \mapsto \begin{pmatrix}z & & \\ {} & \ddots & \\ {} & & z \end{pmatrix} \in \UU(i+1)
\end{equation*}
This represents the trivial element if and only if $i$ is odd. If we denote the generator of $H^{2}(\CP^n;\Z/2)$ by $u$, we therefore get from Proposition \ref{prop:add_form_frame} that $\Sq^2(u^i)=u^{i+1}$ for $i$ odd, and $\Sq^2(u^i)=0$ for $i$ even. Again this could be derived from the Cartan formula.
\end{example}

\subsection{A particular frame assignment for the sock}
\label{subsec_part_n_frames}
In order to use the previous section for calculations of Steenrod squares of links, we need to understand the framings of $1$-dimensional moduli spaces for the category $\cS^n_\mathbf{r}$, where $\mathbf{r}\in (\Z-\{0\})^m$.

Recall the CW-complex $\mathcal{C}(\wt{\cS^n_\mathbf{r}})$ for $\mathbf{r}=(r_1,\ldots,r_m)\in (\Z-\{0\})^m$ from Section \ref{subsec:obstruction_theory}. It is the product
\begin{equation*}
 \mathcal{C}(\wt{\cS^n_\mathbf{r}}) = \mathcal{C}(\wt{\cS^n_{r_1}}) \times \cdots \times \mathcal{C}(\wt{\cS^n_{r_m}}),
\end{equation*}
so in particular its cells are products of cells from the $\mathcal{C}(\wt{\cS^n_{r_i}})$.

The cells of $\mathcal{C}(\wt{\cS^n_r})$ for $r>0$ have already been described in the proof of Lemma \ref{lem:sockobscomptriv}. For the reader's convenience, we will now describe it for $r<0$. The $0$-cells are given by the objects $r,r+1,\ldots,-1,0$, and the $1$-cells correspond to elements of the $0$-dimensional moduli spaces, so we have a $1$-cell $P$ between $-1$ and $0$, $1$-cells $P_i$, $M_i$ between $2i$ and $2i+1$ for $i=\lfloor r/2\rfloor, \ldots, -1$, and $1$-cells $P_{i,0},\ldots, P_{i,n-1}$ between $2i-1$ and $2i$ for $i=\lfloor (r-1)/2\rfloor,\ldots,-1$.

We have the $2$-cell $N_0$ with
\[
 \partial N_0 = P_{-1}-M_{-1}
\]
and $2$-cells $\tilde{N}_{i,0},\ldots, \tilde{N}_{i,n-2}$ for $i=\lfloor (r-1)/2 \rfloor,\ldots,-1$ with
\[
 \partial \tilde{N}_{i,j} = P_{i,j}+P_{i}-M_{i}-P_{i,j+1},
\]
and finally $2$-cells $N_{i,0},\ldots, N_{i,n-2}$ for $i=\lfloor (r-1)/2,\ldots,-1 \rfloor$ with
\[
 \partial N_{i,j} = P_{i-1} + P_{i,j}-P_{i,j+1} - M_{i-1}.
\]
The $3$-cells are given by $Q_{i,0},\ldots, Q_{i,n-3}$ for $i=\lfloor (r-1)/2\rfloor,\ldots,-1$ with
\[
 \partial Q_{i,j} = N_{i,j}+\tilde{N}_{i,j+1}- \tilde{N}_{i,j} - N_{i,j+1}.
\]

To simplify our notation, we will write $a=(a_1,\ldots,a_m)\in \Z^m$ for $0$-cells in the product complex $\mathcal{C}(\wt{\cS^n_\mathbf{r}})$. For $1$-cells we will suppress the second index in $P_{i,j}$ as it will have no effect on the framing value. We then use the notation
\[
 P^j_a=a_1\times \cdots \times a_{j-1} \times P_{a_j} \times a_{j+1} \times \cdots \times a_k,
\]
and similarly for $M^j_a$. The $2$-cells which are products of two $1$-cells (and $0$-cells) are denoted by
\[
 (P^j_a,P^i_a),\, (P^j_a,M^i_a), \, (M^j_a,P^i_a), \,(M^j_a,M^i_a)
\]
where $1\leq j<i \leq k$, where, for example,
\[
 (P^j_a,P^i_a) = a_1 \times \cdots \times a_{j-1} \times P_{a_j} \times a_{j+1} \times \cdots \times a_{i-1} \times P_{a_i} \times a_{i+1} \times \cdots \times a_m.
\]
Other $2$-cells are denoted by $N^j_a$ and $\tilde{N}^j_a$, where $j$ indicates the coordinate of the non-$0$-cell.

We extend this notation to higher dimensional cells in the obvious way. The subscript $a$ indicates that the cell corresponds to a component of a moduli space $\mathcal{M}(b,a)$ for some object $b$. We also say that the cell is \em based at \em $a$.

Given $\mathbf{r}=(r_1,\ldots,r_m)\in (\Z-\{0\})^m$, we want to have a standard sign assignment for the elements of $0$-dimensional moduli spaces. For $m=1$, this is essentially encoded in whether a point is called $P$ or $M$. The resulting sign assignment below is then just obtained by using the usual product convention.
\begin{definition}
\label{def:signassign}
 The \em standard sign assignment \em $s\in C^1(\mathcal{C}(\wt{\cS^n_\mathbf{r}}), \Z/2)$ is the $1$-cochain defined by
\begin{align*}
 s(P^j_a) &= a_1+\cdots + a_{j-1} \\
 s(M^j_a) &= a_1+\cdots + a_{j-1} + 1.
\end{align*}
These sums are in $\Z/2$, with empty sums treated as $0$.
\end{definition}

The standard sign assignment controls the framings of the $0$-dimensional moduli spaces of $\cS_\mathbf{r}$, with $0$ corresponding to a positive framing, and $1$ corresponding to a negative framing.

When framing the $1$-dimensional moduli spaces, we need to accommodate the fact that some of the $r_j$ can be negative. Basically the difference is that $M_i$ cells are based at an even number for negative $r$, and based at an odd number for positive $r$. For this reason, define $\delta=(\delta_1,\ldots,\delta_m)\in \Z^m$ by $\delta_j=0$ if $r_j>0$ and $\delta_j=1$ if $r_j<0$.

\begin{definition}
\label{def:frameassign}
 The \em standard frame assignment \em $f\in C^2(\mathcal{C}(\wt{\cS^n_\mathbf{r}}), \Z/2)$ is defined as
\begin{align*}
 f(P_a^j,P_a^i) &= (a_1+\cdots+a_{j-1})(a_j+\cdots + a_{i-1})\\
 f(P_a^j,M_a^i) &= (a_1+\cdots+a_{j-1})(a_j+\cdots + a_{i-1} + 1)\\
 f(M_a^j,P_a^i) &= (a_1+\cdots+a_{j-1} + 1)(\delta_j + a_{j+1}+\cdots + a_{i-1})\\
 f(M_a^j,M_a^i) &= (a_1+\cdots+a_{j-1} + 1)(\delta_j + a_{j+1}+\cdots + a_{i-1} + 1)\\
 f(N_a^j) &= a_1+\cdots +a_{j-1}\\
 f(\tilde{N}_a^j) &= 0.
\end{align*}
Again these sums are in $\Z/2$.
\end{definition}

The distinction between cells $N$ and $\tilde{N}$ seems random, but the difference is that $N$ is based at an object at which also $M$ is based. We can unify the frame formula using a factor $(a_j+\delta_j)$, but we still have to distinguish the two cases in proofs below.

The standard sign and frame assignments provide us with a framing of the $0$- and $1$-dimensional moduli spaces of $\wt{\cS^n_{\mathbf{r}}}$. Also note that if $|r_i|=1$ for all $i=1,\ldots,m$, we only need $f(P_a^j,P_a^i)$, and the sign and frame assignments agree with the assignments of the cube in \cite{LipSarSq}. We now need the analogue of \cite[Lm.3.5]{LipSarSq}, namely that we can extend this framing to the higher dimensional moduli spaces.

Let $\tau$ be a 3-dimensional cell in $\mathcal{C}(\wt{\cS^n_\mathbf{r}})$. Then $\tau$ is of the form $(E^k_a,E^j_a,E^i_a)$ with the $E$'s corresponding to $1$-cells, that is, chosen from $P$ or $M$, $(E^j_a,F^i_a)$ or $(F^j_a,E^i_a)$ with $F$ chosen from $N$ or $\tilde{N}$, or $Q^j_a$. Furthermore, $\tau$ corresponds to the component of a $2$-dimensional moduli space $\mathcal{M}(b,a)$ where $b$ is an object with $|b|=|a|+3$. This component is a hexagon, except when $\tau= Q^j_a$, in which case it is a square.

The following lemma is the analogue of \cite[Lm.2.1]{LipSarSq}.

\begin{lemma}
\label{lem:coboundary}
 Let $\tau$ be a $3$-cell in $\mathcal{C}(\wt{\cS^n_{\mathbf{r}}})$ based at $a$.
\begin{enumerate}
 \item If $\tau=(E^k_a,E^j_a,E^i_a)$ with the $E$'s some combination of $P$'s and $M$'s, then
\[
 \delta f(\tau) = s(E_a^k)+s(E_a^j)+s(E_a^i).
\]
 \item If $\tau=(E^j_a, N^i_a)$ with $E$ either $P$ or $M$, then
\[
 \delta f(\tau) = s(E^j_a)+s(P^i_a)+s(M^i_a).
\]
\item If $\tau=(N^j_a,E^i_a)$ with $E$ either $P$ or $M$, then
\[
 \delta f(\tau) = s(P^j_a)+s(M^j_a)+s(E^i_a).
\]
\item If $\tau = (E^j_a,\tilde{N}^i_a)$ or $\tau = (\tilde{N}^i_a,E^j_a)$ with $E$ either $P$ or $M$, then
\[
 \delta f(\tau) = s(E^j_a).
\]
\item If $\tau = Q^j_a$, then $\delta f(\tau)=0$.
\end{enumerate}

\end{lemma}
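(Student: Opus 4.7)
My plan is to verify each case by a direct computation in $\Z/2$-valued cochains, expanding $\delta f(\tau)$ using the cellular boundary of $\tau$ together with Definitions \ref{def:signassign} and \ref{def:frameassign}. The boundary of each $3$-cell is determined by the product structure on $\mathcal{C}(\wt{\cS^n_\mathbf{r}})$ and the single-factor boundary relations listed above. Case (5) is immediate: writing $\partial Q^j_a = N^j_a + \tilde{N}^j_{a'} + \tilde{N}^j_a + N^j_{a'}$ in $\Z/2$, the two $\tilde{N}^j$ terms vanish by definition, while $f(N^j_a) = a_1+\cdots+a_{j-1}$ does not involve $a_j$ or the suppressed secondary index, so the two $N^j$ terms cancel.

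For case (1), the $3$-cell $\tau = (E^k_a, E^j_a, E^i_a)$ has six $2$-faces, obtained by collapsing each of the three $1$-cell factors to one of its endpoints. Writing $a + e_p$ for the basepoint obtained by increasing $a_p$ by one, the difference of the $f$-values on the two faces involving $E^p_a$ reduces, via the explicit formulas in Definition \ref{def:frameassign}, to a single monomial that matches precisely one of the three terms $s(E^k_a)$, $s(E^j_a)$, $s(E^i_a)$. This has to be checked in all eight $P/M$-subcases, with the recurrent identity $a_j + \delta_j \equiv 1 \pmod{2}$ being invoked whenever $M^j_a$ appears in $\tau$. This holds because $M^j_a$ is based at odd $a_j$ when $r_j>0$ (so $\delta_j=0$) and at even $a_j$ when $r_j<0$ (so $\delta_j=1$).

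Cases (2)--(4) follow the same template. In case (2), the boundary of $(E^j_a, N^i_a)$ splits into the four $2$-faces coming from $E^j_a \times \partial N^i_a$ together with the two copies of $N^i$ evaluated at the endpoints of $E^j_a$; the two $P^{i'}$-type boundary contributions from $\partial N^i$ cancel in $\Z/2$ since $f$ on the corresponding $2$-cells depends only on coordinates below $i$, and the remaining terms sum to $s(E^j_a) + s(P^i_a) + s(M^i_a)$. Case (3) is symmetric. In case (4), the $\tilde{N}$-contributions are $0$ by definition, and the only surviving contribution is the $E^j$-endpoint shift, yielding $s(E^j_a)$.

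The main obstacle is the combinatorial bookkeeping of case (1): the eight $P/M$-subcases must each be enumerated carefully, and one must confirm that the $\delta_j$-corrections in Definition \ref{def:frameassign} exactly absorb the parity offsets introduced by the basepoints of the $M$-edges. Once this is established, the other cases follow mechanically from the same telescoping principle.
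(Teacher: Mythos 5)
Your plan — expand $\delta f(\tau)$ by evaluating $f$ on the six (or four) codimension-one faces of $\tau$ and using the explicit formulas of Definitions \ref{def:signassign} and \ref{def:frameassign} — is exactly the approach taken in the paper, and the identity $a_j+\delta_j\equiv 1\pmod 2$ whenever an $M^j_a$ cell is present is indeed the key arithmetic fact. However, two of your intermediate claims are not quite right as stated, even though they do not derail the conclusion.

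In case (1), you assert that for each $p\in\{k,j,i\}$ the pair of opposite faces obtained by collapsing $E^p_a$ contributes a single monomial equal to one of $s(E^k_a),s(E^j_a),s(E^i_a)$. This is not what happens: because the formula for $f(E^j_a,E^i_a)$ treats the two indices $j<i$ asymmetrically (the left factor involves $a_1,\dots,a_{j-1}$, the right factor $a_j,\dots,a_{i-1}$), the three pairs of opposite faces contribute, respectively, $s(E^j_a)+s(E^i_a)$ (collapsing $E^k$, which shifts both factors), $s(E^k_a)$ (collapsing $E^j$, which shifts only the right factor), and $0$ (collapsing $E^i$, which shifts neither, since $a_i$ does not appear). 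The total is the asserted $s(E^k_a)+s(E^j_a)+s(E^i_a)$, but the per-pair accounting is not one term each; if you carry out the eight $P/M$ subcases expecting a one-term-per-pair pattern, you will find yourself puzzled. It is cleaner to package the eight subcases via a parameter $\varepsilon\in\{0,1\}$ recording whether each factor is $P$ or $M$, absorb the $\delta_j$ correction into $\varepsilon_j(\delta_j+a_j)\equiv\varepsilon_j$, and then compute the three pairwise sums directly, as the paper does.

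In case (5), your expression $\partial Q^j_a=N^j_a+\tilde N^j_{a'}+\tilde N^j_a+N^j_{a'}$ misplaces the basepoints: from $\partial Q_{i,j}=N_{i,j}+\tilde N_{i,j+1}+\tilde N_{i,j}+N_{i,j+1}$ one sees that both $N$ cells are based at $a$ (they differ only in the suppressed secondary index) and both $\tilde N$ cells are based at $a^j$. Your argument survives anyway, since $f(\tilde N)=0$ identically and $f(N^j_\bullet)=a_1+\cdots+a_{j-1}$ is insensitive both to the secondary index and to the $j$-th coordinate, so the two $N$ terms cancel whichever basepoints you put on them. Similarly, saying case (3) is "symmetric" to case (2) understates the difference: the paper's computation for $(N^j_a,E^i_a)$ produces a product $(a_1+\cdots+a_{j-1})(a_j+\delta_j)$ that must be simplified using $a_j+\delta_j=1$ (which holds precisely because the $j$-th factor is $N$ rather than $\tilde N$), a step with no counterpart in case (2). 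None of this is fatal, but the bookkeeping in cases (1) and (3) is where the real work lies and deserves to be carried out explicitly.
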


\begin{proof}
 Let $\tau=(E^k_a,E^j_a,E^i_a)$. If we look at the boundary of $\tau$, we get six $2$-cells, three of which are based at $a$, and the others are based at $a^k$, $a^j$ and $a^i$, where $a^k$ agrees with $a$, except in the $k$-coordinate, where the entry is $a_k+1$, and similar with $a^j$ and $a^i$. Hence the boundary of $\tau$ with $\Z/2$ coefficients is
\[
 \partial \tau = (E^j_{a^k},E^i_{a^k}) + (E^j_a,E^i_a) + (E^k_{a^j},E^i_{a^j}) + (E^k_a,E^i_a) + (E^k_{a^i},E^j_{a^i}) + (E^k_a,E^j_a).
\]
Write $\varepsilon_m=1$ if $E_a^k=L_a^k$ and $\varepsilon_k=0$ if $E_a^k=R_a^k$, and similarly define $\varepsilon_j$ and $\varepsilon_i$. We then can write
\begin{equation*}
 f(E^j_a,E^i_a)=(a_1+\cdots+a_{j-1}+\varepsilon_j)(\varepsilon_j(\delta_j+a_j)+a_j+\cdots a_{i-1}+\varepsilon_i).
\end{equation*}
Observe that if $\varepsilon_j = 1$, then $(\delta_j+a_j)$ is odd, so we can replace $\varepsilon_j(\delta_j+a_j)$ with $\varepsilon_j$ in $\Z/2$.

Now
\begin{equation*}
 f(E^j_{a^k},E^i_{a^k})  = (a_1+\cdots+a_{j-1}+\varepsilon_j+1)(\varepsilon_j+a_j+\cdots a_{i-1}+\varepsilon_i)
\end{equation*}
so that
\begin{align*}
 f(E^j_{a^k},E^i_{a^k}) + f(E^j_a,E^i_a) &= \varepsilon_j+a_j+\cdots a_{i-1}+\varepsilon_ia'_i\\
 &= s(E^j_a)+s(E^i_a).
\end{align*}
Similarly
\begin{equation*}
 f(E^k_{a^j},E^i_{a^j}) = (a_1+\cdots+a_{k-1}+\varepsilon_k)(\varepsilon_k+a_k+\cdots a_{i-1}+\varepsilon_i+1)
\end{equation*}
so that
\begin{equation*}
 f(E^k_{a^j},E^i_{a^j}) + f(E^k_a,E^i_a) = a_1+\cdots +a_{k-1}+\varepsilon_k = s(E^k_a).
\end{equation*}
As
\[
 f(E^k_{a^i},E^j_{a^i}) + (E^k_a,E^j_a) = 0,
\]
we get the result.

If $\tau=(E^j_a, N^i_a)$, then\footnote{It seems that the cell $(E^j_{a^i},P^i_{a^i})$ appears twice in $\partial\tau$, but the cells are different due to the suppressed index in $P$. However the evaluation of $f$ is the same on both cells.}
\[
 \partial(E^j_a, N^i_a)=N_{a^j}^i+N_a^i + (E_{a^i}^j,P_{a^i}^i)+ (E_{a^i}^j,P_{a^i}^i) +(E_a^j,M_a^i) + (E_a^j,P_a^i).
\]
A similar calculation as before gives us
\begin{align*}
 \delta f(E^j_a, M^i_a) &= 1 + a_1+\cdots+a_{j-1}+\varepsilon_j \\
&= s(P^i_a)+s(M^i_a) + s(E^j_a).
\end{align*}
Replacing $N$ with $\tilde{N}$ has the effect that the summand $1$ disappears.

If $\tau = (N^j_a,E^i_a)$, then
\begin{align*}
 \delta f(N^j_a,E^i_a) &= f((P^j_{a^j},E^i_{a^j})+(P^j_a,E^i_a)) +f((P^j_{a^j},E^i_{a^j})+(M^j_a,E^i_a))+f(N^j_{a^i}+N^j_a)\\
&= (\delta_j+a_{j+1}+\cdots+a_{i-1}+\varepsilon_i) + (a_1+\cdots+a_{j-1})(a_j+\delta_j) + 0.
\end{align*}
Note that $a_j+\delta_j=1$ as we have $N^j_a$ rather than $\tilde{N}^j_a$, so
\[
 \delta f(N^j_a,E^i_a) = a_1+\cdots+a_{j-1}+ \delta_j + a_{j+1}+\cdots+a_{i-1}+\varepsilon_i = s(E_a^i)+1,
\]
which gives the result as before. Replacing $N$ with $\tilde{N}$ has the same effect as before, that is, the summand $1$ disappears.

It remains to check the case $\tau = Q^j_a$. As the boundary of $Q$ contains two cells $N$ and two cells $\tilde{N}$ based at $a$ and $a^j$, it is clear that $\delta f(Q^j_a)=0$.
\end{proof}

If $\tau$ is a product of three $1$-cells, there exist three objects $w_1,w_2,w_3$ with $|w_l|=|a|+1$ and three objects $t_1,t_2,t_3$ with $|t_l|+1=|b|$ which can be placed between $b$ and $a$.

These objects fit in a cube
\begin{equation}
 \label{diagram:three_cell}
\begin{split}
\xymatrix@!=1.5pc{  & & b \ar@{-}[ddll]|{c_3} \ar@{-}[dd]|{c_2+1} \ar@{-}[ddrr]|{c_1} \ar@{.}[ddddll]|{f_1} \ar@/_2pc/@{.}[dddd]|{f_2} \ar@{.}[ddddrr]|{f_3}\\
 & & & & \\ 
t_1 \ar@{-}[dd]|{c_2+1} \ar@{-}[ddrr]|(.25){c_1} \ar@{.}[ddddrr]|{g_1} & & t_2 \ar @{-}[dl]|(.75){c_3+1} \ar @{-}[dr]|(.75){c_1} \ar@/^2pc/@{.}[dddd]|{g_2} & & t_3 \ar@{-}[ddll]|(.25){c_3+1} \ar@{-}[dd]|{c_2} \ar@{.}[ddddll]|{g_3} \\
 & \ar@{-}[dl] & & \ar@{-}[dr] &  \\ 
w_1 \ar@{-}[ddrr]|{c_1} & & w_2 \ar@{-}[dd]|{c_2} & & w_3 \ar@{-}[ddll]|{c_3} \\
 & & & &  \\
 & & a & & }
\end{split}
\end{equation}
where $c_1=s(E^k_a)$, $c_2=s(E^j_a)$ and $c_3=s(E^i_a)$. The other labels for solid lines follow from the sign assignment using $k<j<i$. The labels for the dotted lines come from the frame assignment, namely $g_1=f(E^k_a,E^j_a)$, $g_2=f(E^k_a,E^i_a)$, $g_3=f(E^j_a,E^i_a)$, $f_1=f(E^j_{w_1},E^i_{w_1})$, $f_2=f(E^k_{w_2},E^i_{w_2})$ and $f_3=f(E^k_{w_3},E^j_{w_3})$. 

The component of $\mathcal{M}(b,a)$ corresponding to $\tau$ is determined as follows: The $1$-cells $E_a^k, E^j_a,E^i_a$ correspond to points that we denote by $e^k_a\in \M(w_1,a)$, $e^j_a\in \M(w_2,a)$, $e^i_a\in \M(w_3,a)$, respectively. They also give rise to points
\begin{align*}
 &e^j_{w_1}\in \M(t_1,w_1), e^i_{w_1}\in \M(t_2,w_1), e^k_{w_2}\in \M(t_2,w_2),\\ &e^i_{w_2}\in \M(t_3,w_2), e^k_{w_3}\in \M(t_2,w_3), e^j_{w_3}\in \M(t_3,w_3)
\end{align*}
and $e^i_{t_1}\in \M(b,t_1)$, $e^j_{t_2}\in \M(b,t_2)$, $e^k_{t_3}\in \M(b,t_3)$.

Dropping the subscripts of these points for brevity, the vertices of the hexagon corresponding to $\tau$ are described in Figure \ref{fig_hexagon}.

\begin{figure}[ht]
		{
			\psfrag{e1e2e3}{$e^ke^je^i$}
			\psfrag{e2e1e3}{$e^je^ke^i$}
			\psfrag{e1e3e2}{$e^ke^ie^j$}
			\psfrag{e2e3e1}{$e^je^ie^k$}
			\psfrag{e3e2e1}{$e^ie^je^k$}
			\psfrag{e3e1e2}{$e^ie^ke^j$}
			\includegraphics[height=4cm,width=4cm]{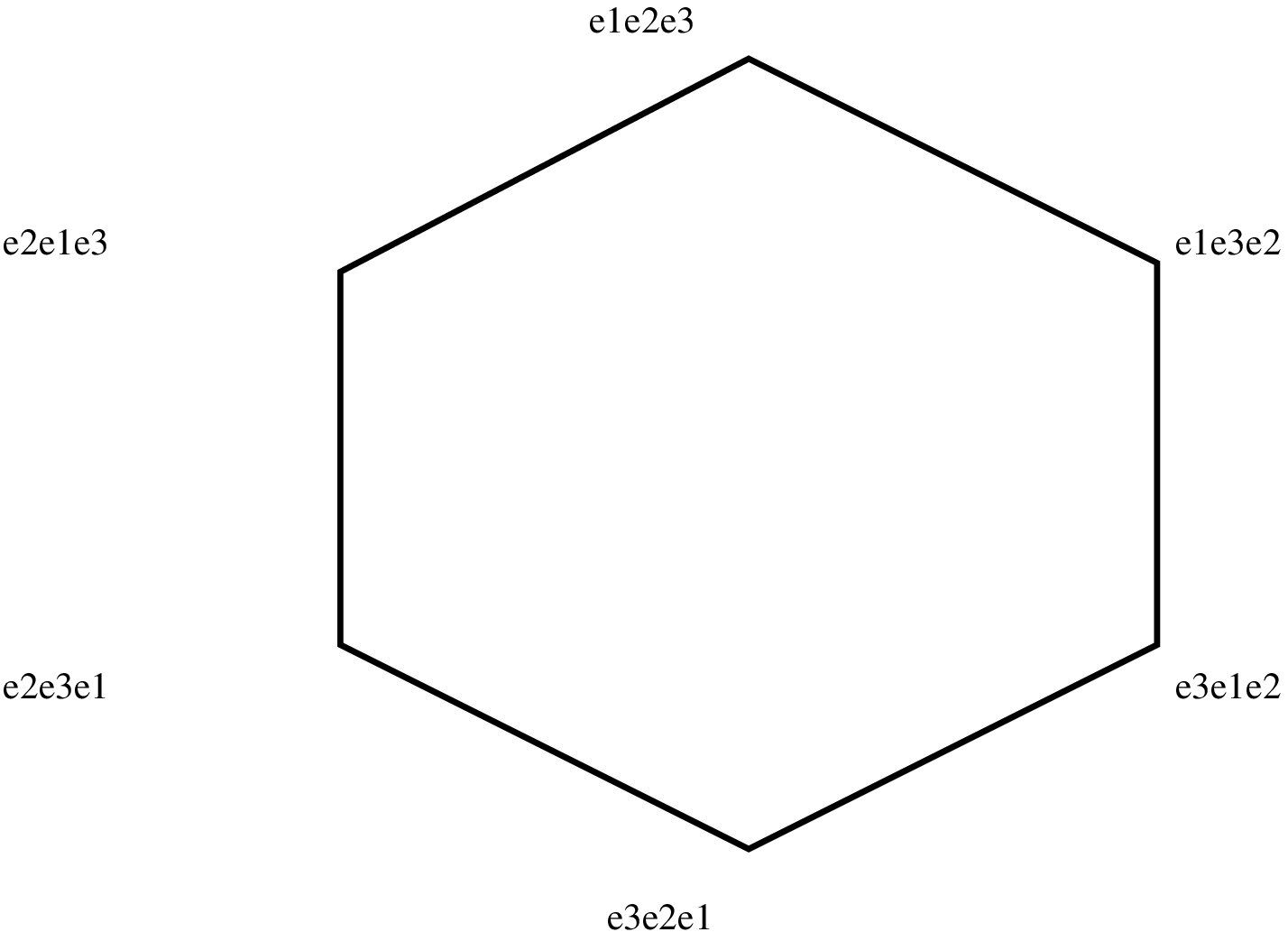}
		}
	\caption{The vertices of the hexagon corresponding to $\tau$.}
	\label{fig_hexagon}
\end{figure}

This hexagon is embedded in $\R^{d_l}\times [0,\infty)\times \R^{d_{l+1}} \times [0,\infty)\times \R^{d_{l+2}}$ for some values $d_l,d_{l+1},d_{l+2}$, and the boundary of this hexagon is, after rearranging coordinates and smoothing $\partial([0,\infty)\times [0,\infty))$, a framed submanifold of $\R^{d+1}$ with $d=d_l+d_{l+1}+d_{l+2}$, compare \cite[Fig.3.3]{LipSarSq}. In order to frame the whole hexagon, we need that the corresponding element of $\Omega_1^{fr}\cong \Z/2$ is the trivial element. Equivalently, we can show that the corresponding loop in $\SO(d+1)$ represents the non-trivial element in $H_1(\SO(d+1))\cong \pi_1(\SO(d+1))\cong \Z/2$.

The calculation of this element is completely analogous to the computation in the proof of \cite[Lm.3.5]{LipSarSq}. Namely, the loop in $\SO(d+1)$ represented by the framed boundary of the hexagon is homologous to the sum of the 18 loops shown in Figure \ref{fig_hexagonloop}.

\begin{figure}[h]
 \psfrag{top}{$c_1\,c_2+1\,c_3$}
 \psfrag{bot}{$c_3\,c_2\,c_1$}
 \psfrag{tol}{$c_2\,c_1\,c_3$}
 \psfrag{tor}{$c_1\,c_3+1\,c_2+1$}
 \psfrag{bol}{$c_2\,c_3+1\,c_1$}
 \psfrag{bor}{$c_3\,c_1\,c_2+1$}
 \psfrag{a1}{$a_1$}
 \psfrag{a2}{$a_2$}
 \psfrag{a3}{$a_3$}
 \psfrag{a4}{$a_4$}
 \psfrag{a5}{$a_5$}
 \psfrag{a6}{$a_6$}
 \psfrag{b1}{$b_1$}
 \psfrag{b2}{$b_2$}
 \psfrag{b3}{$b_3$}
 \psfrag{b4}{$b_4$}
 \psfrag{b5}{$b_5$}
 \psfrag{b6}{$b_6$}
 \psfrag{f1}{$f_1$}
 \psfrag{f2}{$f_2$}
 \psfrag{f3}{$f_3$}
 \psfrag{g1}{$g_1$}
 \psfrag{g2}{$g_2$}
 \psfrag{g3}{$g_3$}
 \includegraphics[height=6cm,width=7cm]{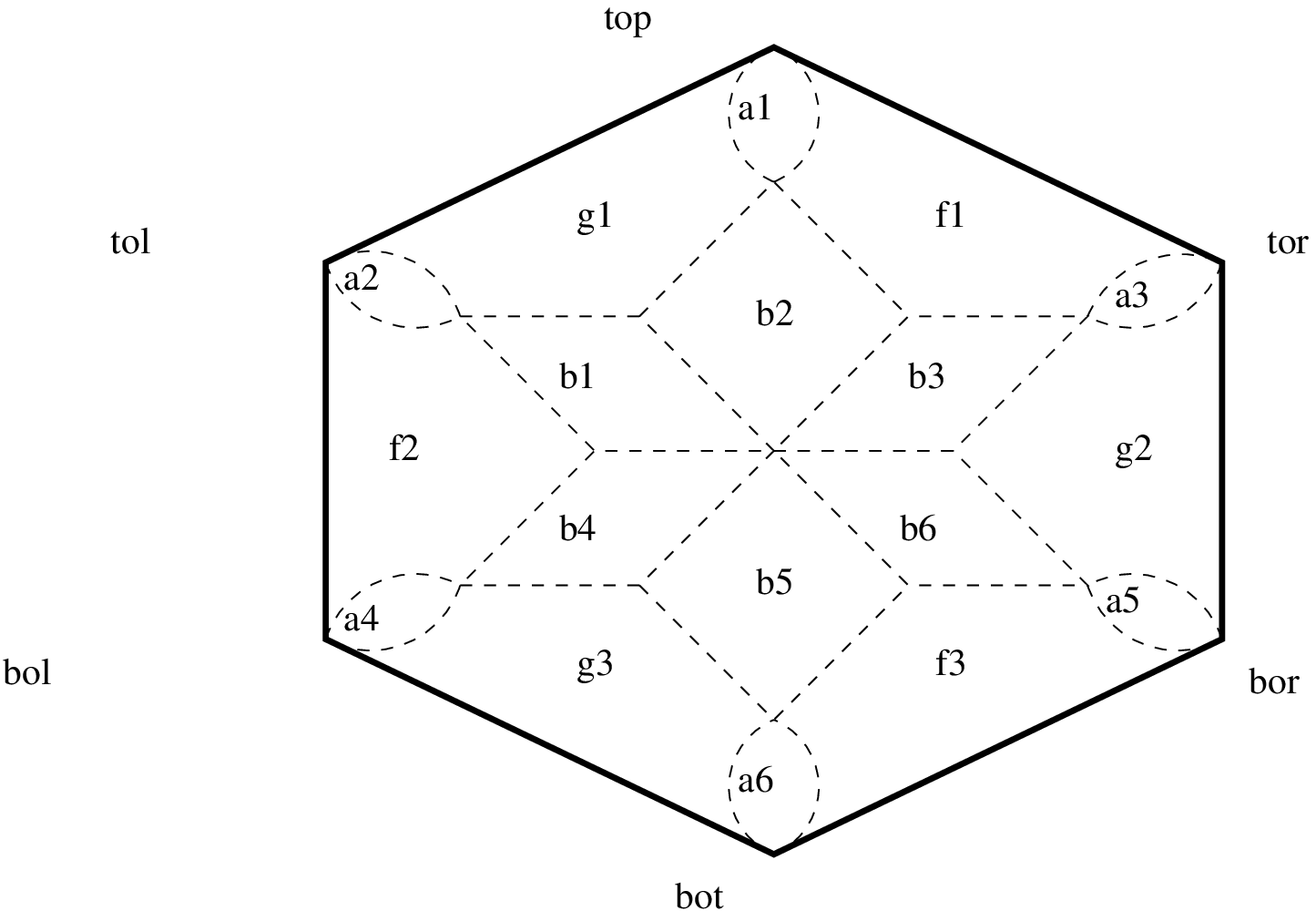}
 \caption{Loops in $\SO(d+1)$.}
 \label{fig_hexagonloop}
\end{figure}

Note that the vertices of the hexagon are represented by numbers $c\,b\,a$ that represent frames in $\{0\}\times \R^d$ given by
\[
 [0,(-1)^ce_1,e_2,\ldots,e_{d_l},(-1)^be_{d_l+1},e_{d_l+2},\ldots,e_{d_l+d_{l+1}},(-1)^ae_{d_l+d_{l+1}+1},\ldots,e_d]
\]
Each vertex in the interior is also represented by a number $c\,b\,a$ representing a frame in $\{0\}\times \R^d$. These numbers can be read off from \cite[Fig.3.4]{LipSarSq}. For example, the vertex in the middle is labeled $0\,0\,0$ referring to the standard basis. Each interior edge in the graph changes exactly one of the labels to $0$.

Each loop represents the value that is described in it, where the value is calculated in the proof of \cite[Lm.3.5]{LipSarSq} and can be read off \cite[Fig.3.4]{LipSarSq} in connection with (\ref{diagram:three_cell}). In particular, we get the values
\[
 \begin{array}{lcl}
  a_1 = c_2c_1+c_1+c_2+1 & & a_2 = c_1c_2+c_1 \\
  a_3 = c_3c_1 + c_1+c_3+1 & & a_4 = c_3c_2 + c_2 +c_3 +1 \\
  a_5 =c_1c_3+c_1 & & a_6 = c_2c_3+c_2.
 \end{array}
\]
We also get
\[
 \begin{array}{lclcl}
 b_1 = c_3c_2 & & b_2 = c_3c_1 & & b_3 = c_2c_1 + c_1 \\
 b_4 = c_1c_2 & & b_5 = c_1c_3 & & b_6 = c_2c_3 + c_3.
 \end{array}
\]
This implies
\begin{align*}
 a_1+\cdots + a_6 &= c_2 + 1\\
 b_1+\cdots + b_6 &= c_1 + c_3.
\end{align*}
From Lemma \ref{lem:coboundary} we get
\[
 f_1+f_2+f_3+g_1+g_2+g_3 = c_1+c_2+c_3
\]
and therefore the boundary of the hexagon, which represents the sum of all 18 loops in $\SO(d+1)$ does indeed represent the non-trivial element of $H_1(\SO(d+1))$, which is what we need to frame the hexagon.

If $\tau=(E_a^j,N_a^i)$, there are only four objects that fit between $a$ and $b$. However, we still get the cube (\ref{diagram:three_cell}) using $w_1=a^j$, $w_2=w_3=a^i$, $t_1=(a^j)^i=t_2$, $t_3=(a^i)^i$. Recall that for $a\in \Z^k$ we write $a^j=a+e_j\in \Z^k$. We still have the points $e^j_a\in \M(w_1,a)$, $e^{i_1}_a=P\in \M(w_2,a)$ and $e^{i_2}_a=M\in \M(w_3,a)$ which together with the shifted points make up the vertices of the hexagon corresponding to $\tau$.
This leads to the analogue of Figure \ref{fig_hexagon} and Figure \ref{fig_hexagonloop}, and the same calculation as in the case of $\tau=(E^m_a,E^j_a,E^i_a)$ shows that the framing can be extended to this hexagon. The fact that $e^{i_1}_a$ and $e^{i_2}_a$ have different sign means that $c_2+c_3=1$, which is irrelevant for the calculation. If we replace $N_a^i$ with $\tilde{N}_a^i$ the only difference is that $e^{i_1}_a$ and $e^{i_2}_a$ have the same sign, but the calculation remains the same.

The cases $\tau=(N^i_a,E_a^i)$ or $(\tilde{N}^i_a,E_a^i)$ are also completely analogous to the previous case.

It remains the case $\tau = Q^j_a$. In this case we only have four objects $a,w,t,b\in \Z^m$ with $\tau$ associated to a square contained in $\M(b,a)$. Also, these objects only differ in one coordinate.

The square has vertices
\begin{multline*}
 (P_i,P_{i,j},P_{i+1}),(M_i,P_{i,j+1},P_{i+1}), (M_i,P_{i,j+2},M_{i+1}), (P_i,P_{i,j+1},M_{i+1}) \\ \in \M(w,a)\times \M(t,w) \times \M(b,w).
\end{multline*}
As in the case of the hexagon, the boundary of the square is framedly embedded in $\R^{d+1}$ and we need to extend the framing to the whole square sitting in $[0,\infty)\times \R^{d+1}$.

The analogue of (\ref{diagram:three_cell}) is the slightly simpler diagram

\begin{equation}
 \label{diagram:three_cellb}
\begin{split}
\xymatrix@!=1.5pc{ & w \ar@{-}[rr]|{c} \ar@{-}[dl]|{c} \ar@{.}[rrrd]|(.75){g} \ar@{-}[rrdd]|(.625){c} & & t \ar@{.}[dlll]|(.75){f} \ar@{-}[dl]|(.75){c} \ar@{-}[rd]|{c}& \\
a \ar@{.}[rrrd]|(.25){f} \ar@{-}[rd]|{c+1} & & \ar@{-}[dl] & & b \ar@{.}[llld]|(.25){g} \ar@{-}[dl]|{c+1} \\
 & w \ar@{-}[rr]|{c} & & t }
\end{split}
\end{equation}
where $c=s(P_i)=s(P_{i,j})=s(P_{i+1})$ and $c+1=s(M_i)=s(M_{i+1})$, $f=f(N_{i,j})=f(N_{i,j+1})$, $g=f(\tilde{N}_{i,j})=f(\tilde{N}_{i,j+1})=0$.

The corresponding loop in $\SO(d+1)$ is represented in Figure \ref{fig_diamondloop}.

\begin{figure}[h]
\psfrag{f1}{$f$}
\psfrag{f2}{$f$}
\psfrag{c+cc+}{$(c+1)\,c\,(c+1)$}
\psfrag{0}{$0$}
\psfrag{c+}{$c+1$}
\psfrag{c}{$c$}
\psfrag{g1}{$g$}
\psfrag{g2}{$g$}
\psfrag{ccc}{$c\,c\,c$}
\psfrag{c+cc}{$(c+1)\,c\,c$}
\psfrag{ccc+}{$c\,c\,(c+1)$}
 \includegraphics[height=5cm,width=6cm]{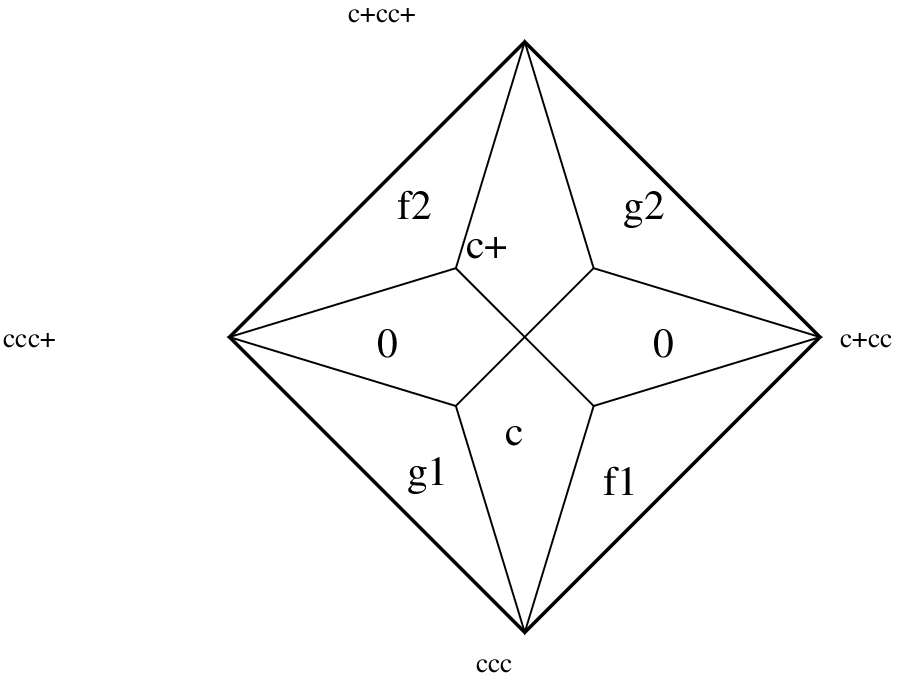}
 \caption{Loops in $\SO(d+1)$.}
 \label{fig_diamondloop}
\end{figure}
The four corners of the square are labelled by three numbers in $\Z/2$, for example $c\,c\,c$ at the bottom. This refers to the framing at that point in the same way it did for the hexagon above. The bottom point corresponds to $(P_i,P_{i,j},P_{i+1})$, similarly for the other points. Each edge represents a frame path joing frames at its endpoints. The point in the middle has framings $0\,c\,0$ while the four points around it have exactly one $0$.

The upper triangle on the left labelled $f$ is described as follows. The outside edge comes from the framing of the interval between the points $(P_i,P_{i,j+1},M_{i+1})$ and $(M_i,P_{i,j+2},M_{i+1})$, the two inside edges are standard paths between framings $c\,c\,(c+1)$ and $0\,c\,(c+1)$, and $(c+1)\,c\,(c+1)$ and $0\,c\,(c+1)$, respectively. The loop is therefore trivial if and only if $f$ is trivial. The other triangles are treated similarly.

For the upper $4$-gon, labelled $c+1$, we have two paths between the framings of $(c+1)\,c\,(c+1)$ and $0\,c\,0$, one by changing the first coordinate and then the third coordinate, and one where it is done in the different order. If $c+1=0$, then both are constant, but if $c=0$, then these two paths are different. This is because the standard path between $0\,0\,1$ and $0\,0\,0$ uses a different rotation than the standard path between $1\,0\,1$ and $1\,0\,0$.
The loop represented by this $4$-gon therefore represents $c+1\in \pi_1(\SO(d+1))$. A similar argument applies to the lower $4$-gon labelled $c$. The other two $4$-gons are in fact simpler, one checks that two of the four edges represent constant paths with the other cancelling each other, and the loop simply represents $0$.

The value of the outside loop is therefore just determined by summing the labels, and they clearly sum up to $1$, which means that the framing can be extended to the square.

\begin{proposition}
 The partial framing from Definitions \ref{def:signassign} and \ref{def:frameassign} can be extended to a framing of the category $\wt{\cS^n_\mathbf{r}}$.
\end{proposition}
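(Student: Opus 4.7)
The plan is to combine the explicit low-dimensional calculations just carried out with the obstruction-theoretic machinery from Section \ref{subsec:obstruction_theory}.

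First I would observe that the content of Definitions \ref{def:signassign} and \ref{def:frameassign} gives the framing of the $0$-dimensional moduli spaces and an extension to the $1$-dimensional ones: the sign assignment $s$ frames the $0$-dimensional moduli spaces, and the frame assignment $f$ specifies, for each full-dimensional $1$-dimensional component, the element of $\pi_1(\SO(d+1))\cong\Z/2$ which, together with the endpoint framings, determines the framed Pontryagin--Thom arc or circle uniquely up to isotopy. Lemma \ref{lem:coboundary} verifies the coherence of $f$ with $s$ at the cochain level, so the partial framing is well-defined on the $1$-skeleton. The explicit hexagon and square computations preceding the statement then establish, via the loop calculations in Figures \ref{fig_hexagonloop} and \ref{fig_diamondloop}, that the obstruction to extending the framing across each individual $2$-cell of $\cw(\wt{\cS^n_\mathbf{r}})$ vanishes. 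Hence the partial framing extends to the full-dimensional $2$-dimensional moduli spaces of $\wt{\cS^n_\mathbf{r}}$.

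Having framed dimensions $\leq 2$, I would proceed by induction on $k\geq 3$ using Proposition \ref{prop:obstruction_class}. Assuming a coherent framing on all full-dimensional components of dimensions $<k$, the obstruction to extending across the $k$-dimensional components is a cocycle
\[
\oc\in C^{k+1}\bigl(\cw(\wt{\cS^n_\mathbf{r}}),\pi_{k-1}(O)\bigr),
\]
and, after possibly modifying the $(k-1)$-dimensional framings by the coboundary of an appropriate cochain, vanishing of $[\oc]$ suffices. But by Proposition \ref{prop:obstruct_vanishes} the CW-complex $\cw(\wt{\cS^n_\mathbf{r}})$ deformation retracts to a point, so
\[
H^{k+1}\bigl(\cw(\wt{\cS^n_\mathbf{r}}),\pi_{k-1}(O)\bigr)=0
\]
for every $k\geq 3$ and every choice of coefficient group. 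Therefore $[\oc]=0$, the framing extends, and the induction continues indefinitely; since $\wt{\cS^n_\mathbf{r}}$ has finitely many moduli spaces of bounded dimension, the process terminates with a coherent framing of the whole partial flow category.

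The main obstacle, already overcome in the material leading up to the statement, is the base case $k=2$: the $\pi_1(\SO)$-obstructions for hexagonal and square $2$-cells are not governed by cohomological vanishing alone but must be verified combinatorially, which is the purpose of Lemma \ref{lem:coboundary} together with the labelled loop diagrams. Once past that case, the higher-dimensional steps are essentially formal, being absorbed into the contractibility of $\cw(\wt{\cS^n_\mathbf{r}})$. I would therefore present the proof as a brief induction whose base case is an appeal to the preceding hexagon/square verifications and whose inductive step is an immediate application of Propositions \ref{prop:obstruction_class} and \ref{prop:obstruct_vanishes}.
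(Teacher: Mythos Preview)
Your proposal is correct and follows essentially the same approach as the paper: the explicit hexagon and square computations handle the extension to the $2$-dimensional moduli spaces, and then the obstruction theory of Section \ref{subsec:obstruction_theory} together with the contractibility of $\cw(\wt{\cS^n_\mathbf{r}})$ (Proposition \ref{prop:obstruct_vanishes}) handles all higher dimensions. One small terminological slip: the hexagons and squares are $2$-dimensional moduli space components, hence correspond to $3$-cells of $\cw(\wt{\cS^n_\mathbf{r}})$ rather than $2$-cells, and the associated obstruction cochain lives in $C^3$; your subsequent sentence shows you understand this, so it is only a wording issue.
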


\begin{proof}
 Our previous argument showed that all the $2$-dimensional moduli spaces in $\wt{\cS^n_\mathbf{r}}$ can be framed extending the standard sign- and frame assignments. The higher dimensional moduli spaces can then be framed using the obstruction theory of Section \ref{subsec:obstruction_theory}.
\end{proof}

\section{Examples}
\label{sec:examples}

\subsection{The (3,4)-torus knot by hand}
\label{subsec:example}
In this section, we consider the torus knot $T_{3,4}$ in the form of the pretzel knot $P(-2,3,3)$ (which appears as $8_{19}$ in the Rolfsen table), and use Theorem \ref{thm:Lip-Sar_equivalence} to calculate a non-trivial Steenrod square ``by hand''. The significance of the knot $8_{19}$ is that it is the first knot for which the Khovanov homotopy type is not a wedge sum of Moore spaces, yielding the nontriviality result in \cite[Theorem 1]{LipSarSq}. The $3$-component pretzel link $P(-2,2,2)$ is in fact the link with the least number of crossings that admits a non-trivial Steenrod square. The calculation is essentially the same as for $P(-2,3,3)$ as we shall see.

Consider the diagram of $T_{3,4} = P(-2,3,3)$ in Figure \ref{fig_pretzel-233}.
\begin{figure}[ht]
 \includegraphics*[scale=0.25]{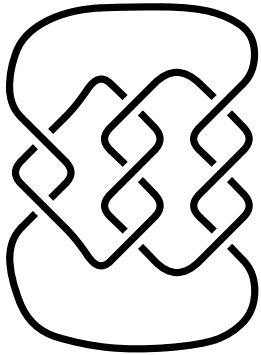}
 \caption{A glued diagram for $T_{3,4}$ with three elementary tangles.}
 \label{fig_pretzel-233}
\end{figure}

The flow category of this diagram has $31$ objects in quantum degree $11$. The cohomological degrees of these objects lie between $1$ and $4$, and are listed in Table \ref{objects_8_19}.
\begin{table}
	\caption{Objects of $\cL(D_{(-2,3,3)})$}
	\label{objects_8_19}
	\begin{center}
		\begin{tabular}{ | c | c | c |}
			\hline
			Object & Generator & Grading $| \; |$   \\ \hline
			$\mathbf{\alpha}_1$ & $(D_{(-1,0,0)}, x_+ x_+)$ & 1 \\ \hline
			$\mathbf{\alpha}_2$ & $(D_{(0,0,0)}, x_+ x_+ x_-)$ & 2 \\ \hline
			$\mathbf{\alpha}_3$ & $(D_{(0,0,0)}, x_+ x_- x_+)$ & 2 \\ \hline
			$\mathbf{\alpha}_4$ & $(D_{(0,0,0)}, x_- x_+ x_+)$ & 2 \\ \hline
			$\mathbf{\alpha}_5$ & $(D_{(-2,2,0)}, x_+)$ & 2 \\ \hline
			$\mathbf{\alpha}_6$ & $(D_{(-2,0,2)}, x_+)$ & 2 \\ \hline
			$\mathbf{\alpha}_7$ & $(D_{(-1,1,0)}, x_+)$ & 2 \\ \hline
			$\mathbf{\alpha}_8$ & $(D_{(-1,0,1)}, x_+)$ & 2 \\ \hline
			$\mathbf{\alpha}_9$ & $(D_{(-2,1,1)}, x_+x_+)$ & 2 \\ \hline
			$\mathbf{\alpha}_{10}$ & $(D_{(0,1,0)}, x_+x_-)$ & 3 \\ \hline
			$\mathbf{\alpha}_{11}$ & $(D_{(0,1,0)}, x_-x_+)$ & 3 \\ \hline
			$\mathbf{\alpha}_{12}$ & $(D_{(0,0,1)}, x_+x_-)$ & 3 \\ \hline
			$\mathbf{\alpha}_{13}$ & $(D_{(0,0,1)}, x_-x_+)$ & 3 \\ \hline
			$\mathbf{\alpha}_{14}$ & $(D_{(-1,1,1)}, x_+x_-)$ & 3 \\ \hline
			$\mathbf{\alpha}_{15}$ & $(D_{(0,0,1)}, x_-x_+)$ & 3 \\ \hline
			$\mathbf{\alpha}_{16}$ & $(D_{(-1,2,0)}, x_-)$ & 3 \\ \hline
			$\mathbf{\alpha}_{17}$ & $(D_{(-1,0,2)}, x_-)$ & 3 \\ \hline
			$\mathbf{\alpha}_{18}$ & $(D_{(-2,3,0)}, x_-)$ & 3 \\ \hline
			$\mathbf{\alpha}_{19}$ & $(D_{(-2,0,3)}, x_-)$ & 3 \\ \hline
			$\mathbf{\alpha}_{20}$ & $(D_{(-2,1,2)}, x_+x_-)$ & 3 \\ \hline
			$\mathbf{\alpha}_{21}$ & $(D_{(-2,1,2)}, x_-x_+)$ & 3 \\ \hline
			$\mathbf{\alpha}_{22}$ & $(D_{(-2,2,1)}, x_+x_-)$ & 3 \\ \hline
			$\mathbf{\alpha}_{23}$ & $(D_{(-2,2,1)}, x_-x_+)$ & 3 \\ \hline
			$\mathbf{\alpha}_{24}$ & $(D_{(0,1,1)}, x_-)$ & 4 \\ \hline
			$\mathbf{\alpha}_{25}$ & $(D_{(0,2,0)}, x_-x_-)$ & 4 \\ \hline
			$\mathbf{\alpha}_{26}$ & $(D_{(0,0,2)}, x_-x_-)$ & 4 \\ \hline
			$\mathbf{\alpha}_{27}$ & $(D_{(-1,2,1)}, x_-x_-)$ & 4 \\ \hline
			$\mathbf{\alpha}_{28}$ & $(D_{(-1,1,2)}, x_-x_-)$ & 4 \\ \hline
			$\mathbf{\alpha}_{29}$ & $(D_{(-2,1,3)}, x_-x_-)$ & 4 \\ \hline
			$\mathbf{\alpha}_{30}$ & $(D_{(-2,2,2)}, x_-x_-)$ & 4 \\ \hline
			$\mathbf{\alpha}_{31}$ & $(D_{(-2,3,1)}, x_-x_-)$ & 4 \\ \hline
		\end{tabular}
	\end{center}
\end{table}
The flow category, along with its $0$-dimensional moduli spaces, is displayed in Figure \ref{page0}. We use the notation of Section \ref{subsec_part_n_frames}, but also indicate the sign of the point. More precisely, we say $s(P)=0=s(-M)$, $s(-P)=1=s(M)$. The signs are sprinkled as prescribed by the standard sign assignment of Definition \ref{def:signassign}.

\begin{figure}[p]
	\includegraphics[angle = 90, scale=0.75]{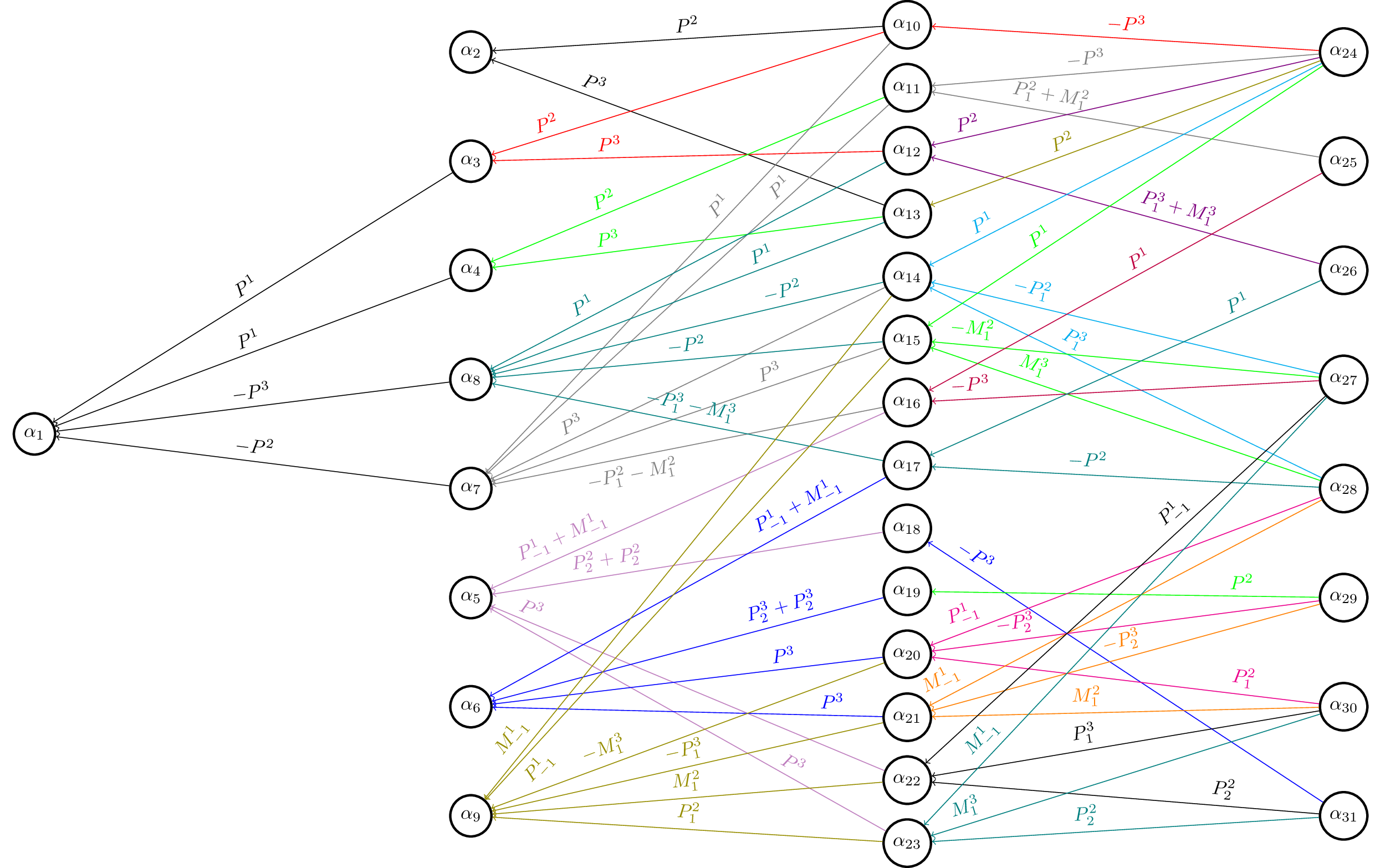}
	\caption{The flow category $\cL(D_{(-2,3,3)})$ of $8_{19} = P(-2,3,3)$.}
	\label{page0}
\end{figure}

Before we analyze the higher dimensional moduli spaces note that the full subcategory with objects $\alpha_{18},\alpha_{19},\alpha_{29},\alpha_{31}$ forms an upward closed subcategory in the sense of \cite[\S 3.4.2]{LipSarKhov}, which is contractible. Using \cite[Lm.3.32]{LipSarKhov} we can pass to the full subcategory without these objects without changing the stable homotopy type. Notice that the resulting category is the same as if we had started with the pretzel link $P(-2,2,2)$ in quantum degree $-3$.

The full subcategory generated by the objects $\alpha_1,\alpha_8$ forms a contractible downward closed subcategory, and after passing to the quotient category, the objects $\alpha_2,\alpha_3,\alpha_{10},\alpha_{12},\alpha_{13}$ and $\alpha_{24}$ form a contractible upward closed subcategory.

The resulting flow category, which we shall denote by $\Cat$, contains $19$ objects and is displayed with its $0$-dimensional moduli spaces in Figure \ref{page1}.
\begin{figure}
	\includegraphics[scale=0.5]{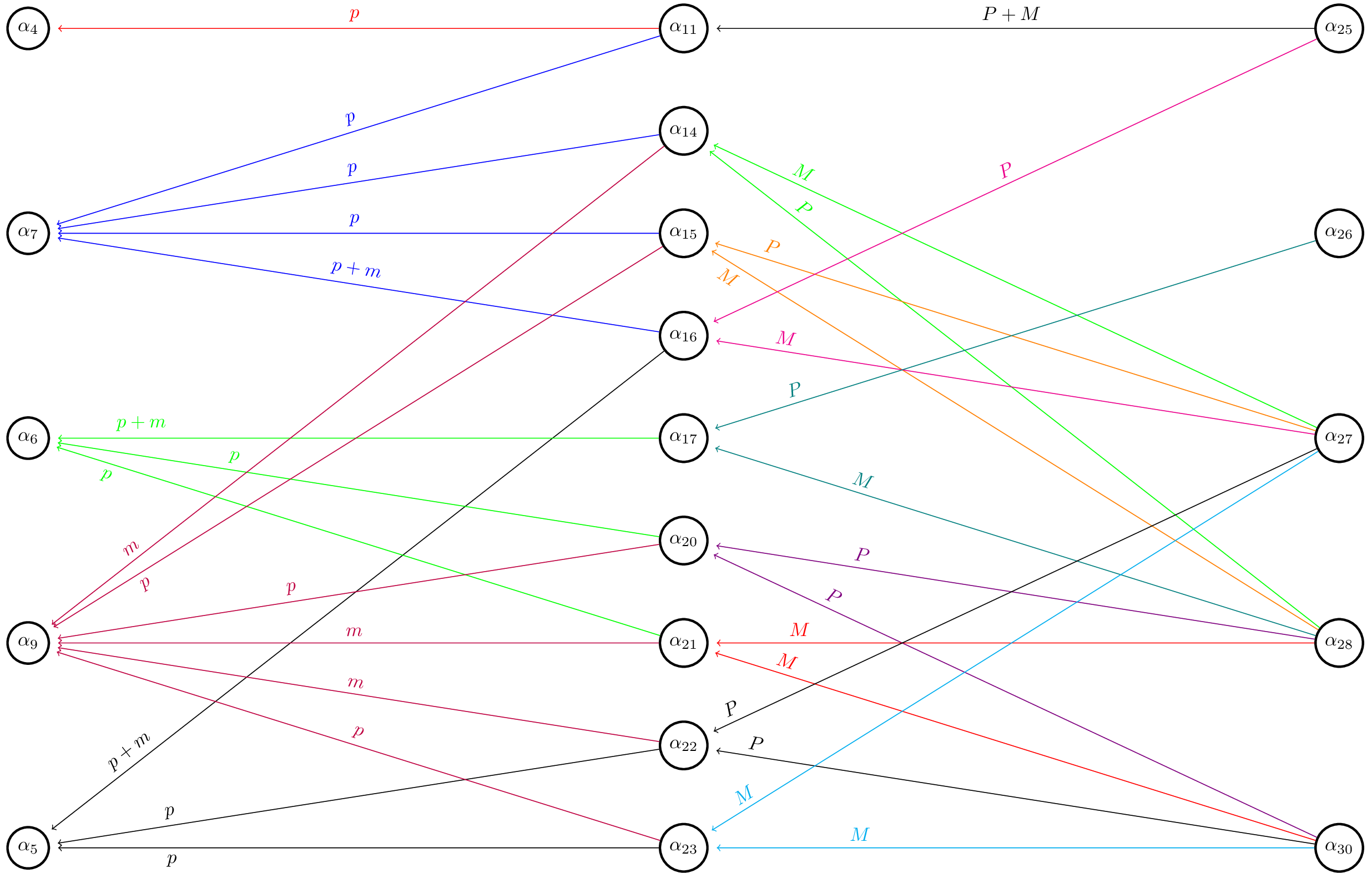}
	\caption{The flow category $\Cat$.}
	\label{page1}
\end{figure}
The $1$-dimensional moduli spaces have frame assignments as prescribed in Definition \ref{def:frameassign} and are listed in Figure \ref{mod_spaces_1}. In order to calculate the framings of each of the intervals, one must refer to the $P_{j}^{i}$, $M_j^i$ notation of the $0$-dimensional moduli spaces in Figure \ref{page0}. However, once these framings have been calculated, this notation is unnecessary and in Figure \ref{page1} we simply refer to a plus (respectively a minus) sign using either a $p$ or $P$ (respectively either a $m$ or $M$). The boundary points of the $1$-dimensional moduli spaces (which are listed in Figure \ref{mod_spaces_1}) are labelled using this notation. The object of the flow category where the boundary breaks is highlighted in blue, and the value of the frame is highlighted in red.
\begin{figure}
	\includegraphics[scale=1]{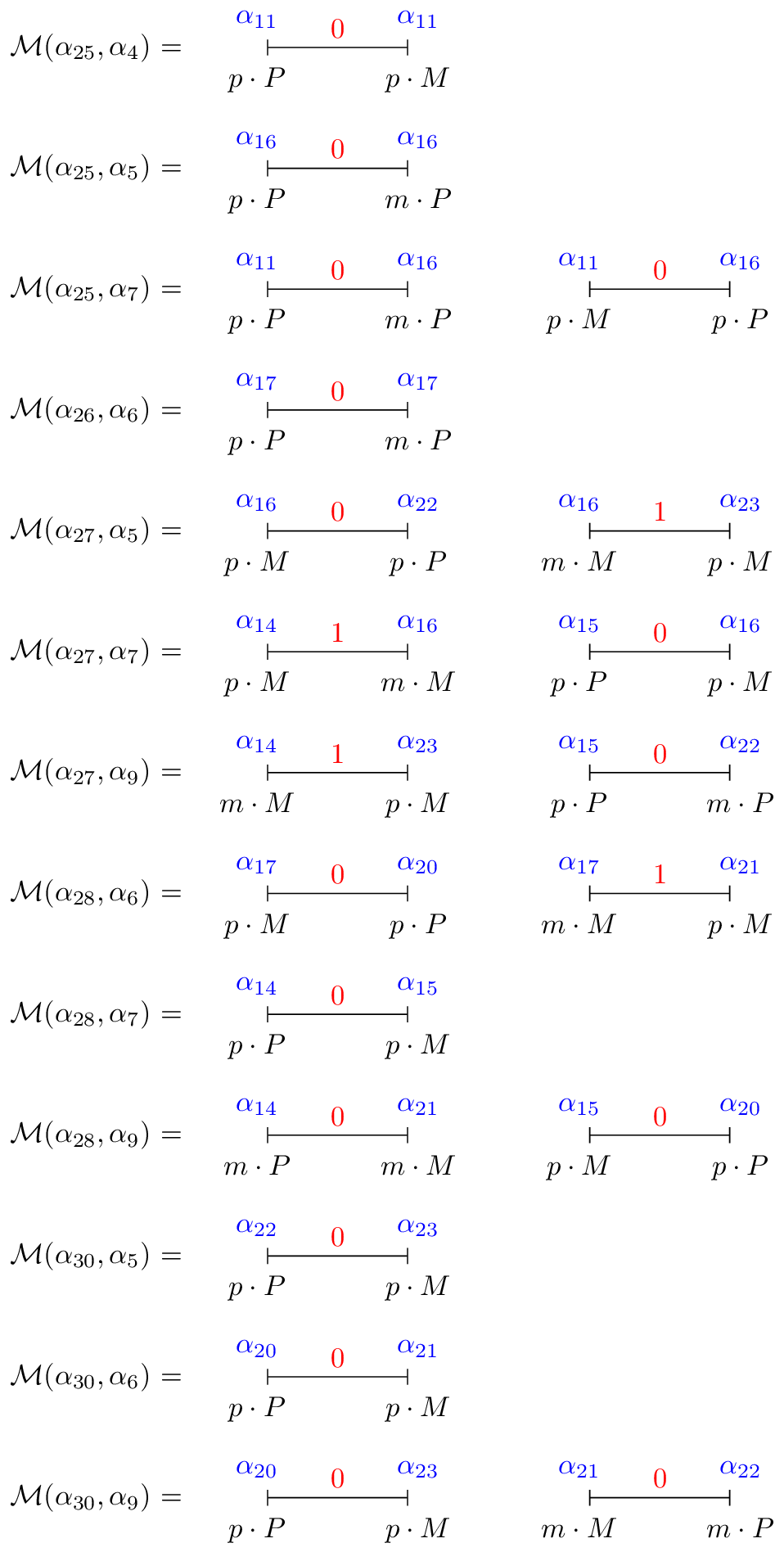}
	\caption{$1$-dimensional moduli spaces for $\Cat$.}
	\label{mod_spaces_1}
\end{figure}

To calculate the Steenrod square notice that the cocycle generating the non-trivial element in $H^2$ is the sum of all five objects of cohomological degree $2$. Notice also that all objects of cohomological degree $4$ are cohomologous, and every such object represents the non-trivial cohomology class in $H^4$.

We need to choose a topological boundary matching and we use the one highlighted in Figure \ref{tbm1}. Notice in particular that there are three pairs of choices for $\eta_{16}$, and we have chosen a boundary-coherent matching of the two points corresponding to $\alpha_5$, one with a positive and the other with a negative (similarly for $\alpha_7$).
\begin{figure}
	\includegraphics[scale=1]{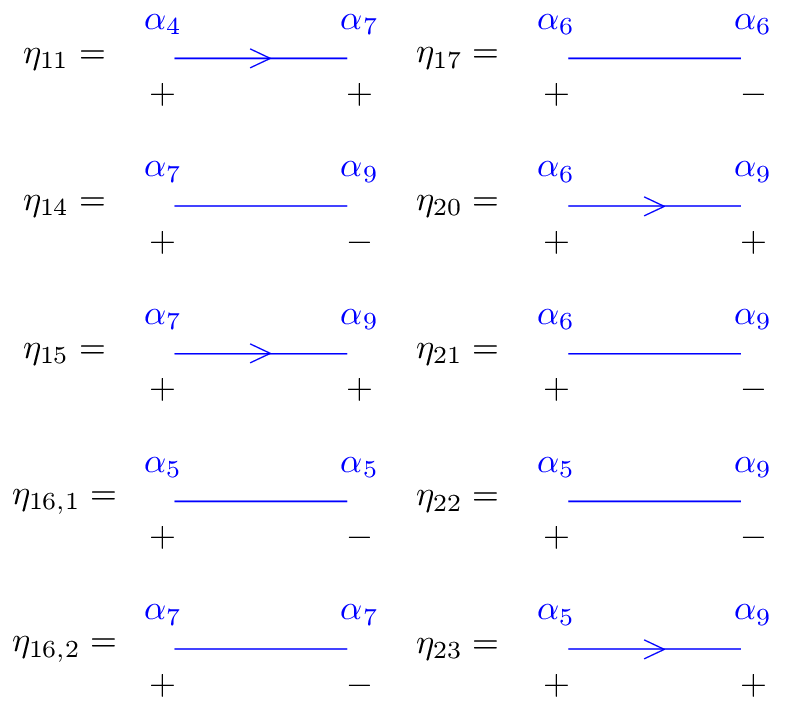}
	\caption{Choice of topological boundary matchings for $\Cat$.}
	\label{tbm1}
\end{figure}
According to these choices of topological boundary matchings, the Steenrod square can be computed using Proposition \ref{prop:add_form_frame} in the following way. Let $u \in H^2(\Cat;\Z/2)$ be the cohomology class represented by the cocycle $c \in C^2(\Cat;\Z/2)$ given by
\[
c = \alpha_4 + \alpha_5 + \alpha_6 + \alpha_7 + \alpha_9 {\rm .}
\]
Following the construction explained in Section \ref{subsec:steen_fl_cat}, $\Sq^2(u)$ is given by the sum of the values assigned to each framed circle $(K,\Phi)$ inside $\partial \mathcal{C}(z)$ for each $z \in \{ \alpha_{25}, \alpha_{26}, \alpha_{27}, \alpha_{28}, \alpha_{30} \}$. The computation of the Steenrod square is illustrated in Figure \ref{computing_sq2} and the individual components of the sum are listed below (where we count arrows in the clockwise direction and sum in the order corresponding to Proposition \ref{prop:add_form_frame}):

The component from $\partial \mathcal{C}(\alpha_{25})$ is $1+0+1 \equiv 0$ (mod $2$) for the outside circle, and $1+0+0 \equiv 1$ (mod $2$) from the inside circle, giving a total of $1$ (mod $2$).

The component from $\partial \mathcal{C}(\alpha_{26})$ is $1+0+0 \equiv 1$ (mod $2$).

The component from $\partial \mathcal{C}(\alpha_{27})$ is $1+3+2 \equiv 0$ (mod $2$).

The component from $\partial \mathcal{C}(\alpha_{28})$ is $1+1+1 \equiv 1$ (mod $2$).

The component from $\partial \mathcal{C}(\alpha_{30})$ is $1+0+1 \equiv 0$ (mod $2$).

Thus giving a non-trivial Steenrod square $\Sq^2(u) = 1 \in H^4(\Cat;\Z/2)$.

\begin{figure}
	\includegraphics[scale=0.85]{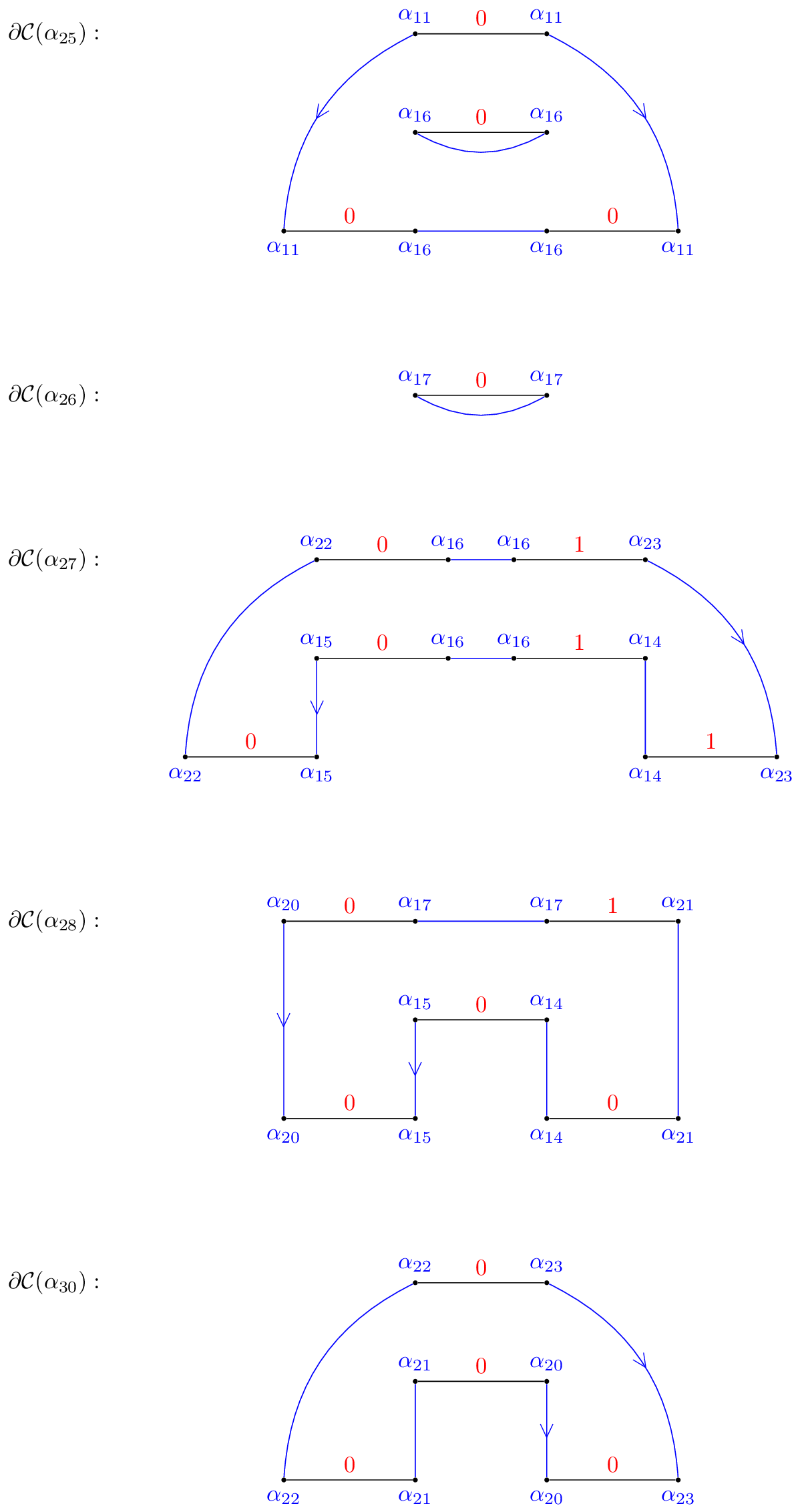}
	\caption{Computation of the Steenrod square for $\Cat$.}
	\label{computing_sq2}
\end{figure}

\subsection{Computer calculations}
\label{subsec:calculations}
The following computer calculations have been performed utilizing the techniques of this paper. The programme together with documentation can be found at the address

{\tt $\,$ http://www.maths.dur.ac.uk/~dma0ds/knotjob.html }

To describe the stable homotopy types that arise, we use a notation similar to the one used by Baues \cite{baues} to describe the \em elementary Chang complexes\em. Let $\eta\colon S^{n+1}\to S^n$ be the generator of $\pi_{n+1}(S^n)$. We assume $n\geq 3$ here, but will de-suspend later to also allow any $n\in \Z$. The CW-complex $X(\eta)$ is obtained from $S^n$ by attaching an $(n+2)$-cell via $\eta$.
We now assume that $p,q\geq 2$ are powers of $2$. Define $X(_p\eta)$ by attaching an $(n+1)$-cell to $S^n$ by a degree $p$ map, and attaching an $(n+2)$-cell via $\eta$.

Also define $X(\eta q)$ by attaching an $(n+2)$-cell to $S^{n+1} \vee S^n$ using a degree $q$ map to attach to $S^{n+1}$, and $\eta$ to attach to $S^n$. Finally let $X(_p\eta q)$ be obtained by attaching an $(n+1)$-cell to $X(\eta q)$ using a degree $p$ map.

To indicate the first non-trivial integral homology group, we use the notation
\[
 X(\eta,n),\, X(_p\eta,n),\, X(\eta q,n), \, X(_p\eta q,n)
\]
for the elementary Chang complexes. Here we allow $n\in \Z$ using appropriate de-suspension. To simplify comparison to \cite{LipSarSq}, note that
\begin{align*}
 X(\eta,2) & \simeq \CP^2 \\
 X(_2\eta,3) & \simeq \RP^5/\RP^2 \\
 X(\eta 2,2) & \simeq \RP^4/\RP^1 \\
 X(_2\eta 2,2) & \simeq \RP^2 \wedge \RP^2.
\end{align*}

In the following examples we write $\X^n_j(D)$ to mean the quantum degree $j$ summand of $\SL_n$ stable homotopy type associated to the diagram $D$.

\begin{example}
 Consider the pretzel link $P(-2,2,2)$, which is the only link with at most $6$ crossings to have a Khovanov stable homotopy type that is not a wedge of Moore spaces (see \cite{LipSarSq}). As this link has an obvious matched diagram, we can obtain $\SL_n$-calculations for $n\geq 3$. The integral $\SL_3$-cohomology has been calculated by Lewark, and can be found at

{\tt $\,$ http://lewark.de/lukas/foamho.html }

The link is listed there as L06n001, and has several quantum degrees of cohomological width $3$, and one, $q=-4$, of cohomological width $4$. Steenrod square calculations show that there is a single non-trivial Steenrod square in quantum degree $-6$. As the cohomology is free abelian in this degree, this demonstrates that there is a $\CP^2$ summand in the wedge decomposition. More precisely, we have
\[
 \X^3_{-6}(P) \simeq S^2 \vee S^2 \vee X(\eta,0).
\]
We note that no $\CP^2$ summand has yet been detected in the Lipshitz-Sarkar stable homotopy type, but this may well be due merely to a lack of computations so far.

The calculation only requires about $30$ objects in the flow category, and we shall give a calculation along the lines of Section \ref{subsec:example} in an forthcoming paper.

In all other quantum degrees the stable homotopy type is a wedge of Moore spaces. This includes $q=-4$, as width $4$ is obtained via $3$-torsion, which does not contribute to more complicated homotopy types by the classification result due to Baues and Hennes \cite{bauhen}.

For $n=4$ there are several non-trivial Steenrod squares, namely $St(0,q,4) = ( 1, 0, 0, 0)$ for $q=-5,-7,-9$. This means there is a $X(\eta,0)$ summand in quantum degrees $q=-5$ and $-7$ for width reasons. For $q=-9$ we cannot conclude this, as there is non-zero cohomology in cohomological degree $4$ as well.
\end{example}

\begin{example}
 Consider the pretzel link $P(2,-3,5)$, which is $10_{125}$ in the Rolfsen knot table. This knot is known to have a matched diagram, and one such diagram $D$ is given in Figure \ref{fig:pret_2-35}. The Khovanov cohomology of this knot is thin, so the stable homotopy type is necessarily a wedge of Moore spaces.
\begin{figure}[ht]
\includegraphics[height=6cm,width=4cm]{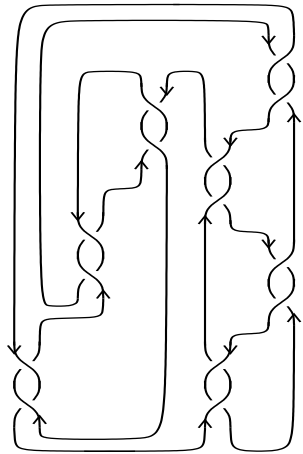}
\caption{A matched diagram of $P(2,-3,5)$.}
\label{fig:pret_2-35}
\end{figure}
The $\SL_3$ Khovanov-Rozansky cohomology has several quantum degrees which have potential non-trivial second Steenrod square, but calculations show that actually none of them are non-trivial.

The $\SL_4$ Khovanov-Rozansky cohomology has non-trivial second Steenrod squares in quantum degrees $q=-1,1,3$. For $q=3$ the cohomological width is $5$, so we cannot identify the stable homotopy type with our current methods. Integral cohomology calculations show
\[
 \X^4_1(D) \simeq S^{-2} \vee S^{-1} \vee S^0 \vee S^0 \vee X(\eta 2,-2).
\]
For $q=-1$ the integral homology $H_{-2}(\X^4_{-1}(D)) = \Z \oplus \Z/4$ so determining the stable homotopy type requires calculation of a certain Bockstein homomorphism which has not been implemented yet. Cohomological width is $3$ in this quantum degree.
\end{example}

\begin{example}
 The knot diagram in Figure \ref{fig:tor_74} was obtained by performing several Reidemeister moves on the torus knot $T_{4,7}$. We do not reproduce these moves here, but the keen reader may use information and programmes on the Knot Atlas to verify at least that this knot has the same Khovanov cohomology as $T_{4,7}$.

\begin{figure}[ht]
\psfrag{-2}{-$2$}
\psfrag{5}{$5$}
\psfrag{3}{$3$}
\psfrag{2}{$2$}
\includegraphics[height=6cm,width=4cm]{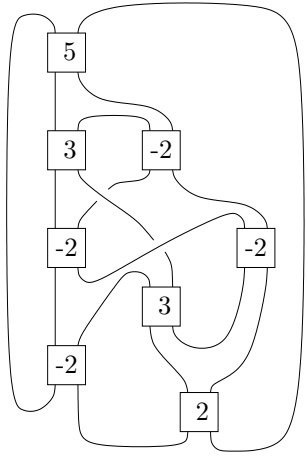}
\caption{A glued diagram of $T_{4,7}$ with ten elementary tangles.}
\label{fig:tor_74}
\end{figure}

Non-trivial second Steenrod squares occur in quantum degrees $q=23,27,29,31,$ $33,35,37$ and $39$. From the Khovanov cohomology listed in the Knot Atlas and the action of the Steenrod algebra we obtain that
\begin{align*}
 \KhSpace_{23}(T_{4,7}) & \simeq X(_2\eta,2) \\
 \KhSpace_{27}(T_{4,7}) & \simeq X(\eta 2,5) \vee S^6 \\
 \KhSpace_{29}(T_{4,7}) & \simeq X(\eta 2,5) \vee S^7 \vee S^8 \\
 \KhSpace_{31}(T_{4,7}) & \simeq S^7 \vee M(\Z/4,8) \vee X(_2\eta,8)\\
 \KhSpace_{33}(T_{4,7}) & \simeq S^9 \vee M(\Z/2,9) \vee X(\eta2,9) \vee S^{10}\\
 \KhSpace_{35}(T_{4,7}) & \simeq X(_2\eta,9) \vee X(_2\eta,10) \vee S^{11}\\
 \KhSpace_{37}(T_{4,7}) & \simeq S^{11} \vee X(_2\eta4,11)\\
 \KhSpace_{39}(T_{4,7}) & \simeq X(\eta 2,12) \vee S^{13}.
\end{align*}
Note that for $q = 37$ there is a summand previously undetected in the Lipshitz-Sarkar stable homotopy type.  For $q=29,31$ and $35$ we need to use the Classification Theorem of Baues and Hennes \cite{bauhen}, see also \cite{baues}. In particular \cite[Cor.11.18]{baues} lists all indecomposable $(n-1)$-connected $(n+3)$-dimensional homotopy types with cyclic integral homology groups. Together with the action of the Steenrod algebra we can identify the stable homotopy type in these cases.
Note that for $q=25$ we cannot identify the stable homotopy type from Steenrod square information alone.
\end{example}

\begin{example}
 A similar calculation can be done for the torus knot $T_{4,5}$. We get non-trivial second Steenrod square in quantum degrees $q=17,21,23,25$. More precisely, the non-trivial part of the Lipshitz-Sarkar Steenrod link invariant $\St(T_{4,5})$, see \cite[Def.4.3]{LipSarSq} is given by
\begin{align*}
 \St(2,17) & = (0,1,0,0) \\
 \St(5,21) & = (0,0,1,0) \\
 \St(5,23) & = (0,0,1,0) \\
 \St(7,25) & = (1,0,0,0).
\end{align*}
Together with the integral cohomology information and \cite[Cor.11.18]{baues} for $q=23$ we get
\begin{align*}
 \KhSpace_{17}(T_{4,5}) & \simeq X(_2\eta,2) \\
 \KhSpace_{21}(T_{4,5}) & \simeq X(\eta 2,5)\vee S^6 \\
 \KhSpace_{23}(T_{4,5}) & \simeq X(\eta 2,5) \vee S^7 \vee S^8 \\
 \KhSpace_{25}(T_{4,5}) & \simeq X(\eta 4,7).
\end{align*}
Note that $q=25$ is a stable homotopy type previously undetected.  For $q=19$ we cannot identify the stable homotopy type from Steenrod square information alone. In all other quantum degrees the stable homotopy type is a wedge of Moore spaces.
\end{example}

\bibliographystyle{myamsalpha}
\bibliography{master_sln}
\end{document}